\def\normo#1{\left\|#1\right\|}
\def\brk#1{\left(#1\right)}
\def\rev#1{\frac{1}{#1}}
\def\half#1{\frac{#1}{2}}
\def\norm#1{\|#1\|}
\def\jb#1{\langle#1\rangle}
\def\wt#1{\widetilde{#1}}
\def\wh#1{\widehat{#1}}
\newcommand{\N}{{\mathbb N}}
\newcommand{\T}{{\mathbb T}}
\newcommand{\R}{{\mathbb R}}
\newcommand{\C}{{\mathbb C}}
\newcommand{\Z}{{\mathbb Z}}
\newcommand{\ft}{{\mathcal{F}}}
\newcommand{\les}{{\lesssim}}
\newcommand{\ges}{{\gtrsim}}
\newcommand{\Sch}{{\mathcal{S}}}
\newcommand{\dr}{{\omega}}
\begin{document}
\title{Global well-posedness and I-method for the fifth-order Korteweg-de Vries equation}
\author{ Wengu Chen$^1$, Zihua Guo$^2$
\\$^1$Institute of Applied Physics
and Computational Mathematics\\P.O.Box 8009, Beijing 100088,
China\\$^2$ LMAM, School of Mathematical Sciences, Peking University\\
Beijing 100871, China}

\date{E-mail:\,chenwg@iapcm.ac.cn,\,\,zihuaguo@math.pku.edu.cn}

\maketitle

\theoremstyle{plain}
  \newtheorem{theorem}[subsection]{Theorem}
  \newtheorem{proposition}[subsection]{Proposition}
  \newtheorem{lemma}[subsection]{Lemma}
  \newtheorem{corollary}[subsection]{Corollary}
  \newtheorem{conjecture}[subsection]{Conjecture}
    \newtheorem*{conj}{Conjecture}
  \newtheorem{problem}[subsection]{Problem}

\theoremstyle{remark}
  \newtheorem{remark}[subsection]{Remark}
  \newtheorem{remarks}[subsection]{Remarks}

\theoremstyle{definition}
  \newtheorem{definition}[subsection]{Definition}

\newtheorem{thm}{Theorem}[section]
\newtheorem{thmA}[thm]{Theorem A}
\newtheorem{cor}[thm]{Corollary}
\newtheorem{prop}[thm]{Proposition}
\newtheorem{define}[thm]{Definition}
\newtheorem{rem}[thm]{Remark}
\newtheorem{example}[thm]{Example}
\def\theequation{\thesection.\arabic{equation}}

\begin{abstract}
We prove that the Kawahara equation is locally well-posed in
$H^{-7/4}$ by using the ideas of $\bar{F}^s$-type space
\cite{GuoKdV}. Next we show it is globally well-posed in $H^s$ for
$s\geq -7/4$ by using the ideas of ``I-method'' \cite{I-method}.
Compared to the KdV equation, Kawahara equation has less symmetries,
such as no invariant scaling transform and not completely
integrable. The new ingredient is that we need to deal with some new
difficulties that are caused by the lack symmetries of this
equation.

{\bf Keywords:} {Global Well-posedness, I-method, Kawahara equation}
\end{abstract}

\section{Introduction}
\setcounter{equation}{0} \label{sec:1}

This paper is mainly concerned with the global well-posedness of the
Cauchy problem for the Kawahara equation
\begin{equation}\label{kawa}
\left\{
\begin{array}{l}
u_t+ \alpha u_{xxx} + \beta u_{xxxxx} + uu_x =0, \quad x,t\in {\R},\\
u(x,0) =u_0(x),
\end{array}
\right.
\end{equation}
where $\alpha$ and $\beta$ are real constants and $\beta\neq 0$. By
a renormalizing of $u$, we may assume $\beta=1$. The fifth-order KdV
type equations arise in modeling gravity-capillary waves on a
shallow layer and magneto-sound propagation in plasmas (see e.g.
\cite{Ka}).

The well-posedness on the fifth-order KdV type equations has
attracted many attentions. Ponce \cite{Ponce} proved an $H^4$ global
well-posedness for the Cauchy problem of the following general
fifth-order KdV equation
\[u_t+u_x+c_1uu_x+c_2u_{xxx}+c_3u_xu_{xx}+c_4uu_{xxx}+c_5u_{xxxxx}=0, \quad x,t\in \R.\]
In \cite{KPV1,KPV2} Kenig, Ponce and Vega studied the following
high-order dispersive equation
\[u_t+\partial_x^{2j+1}u+P(u,\partial_xu,\cdots,\partial_x^{2j}u)=0,\]
where $P$ is a polynomial without constant or linear terms. For the
Kawahara equation \eqref{kawa}, Cui, Deng and Tao \cite{CDT} proved
$H^s$ LWP for $s>-1$, which is later improved by Wang, Cui and Deng
\cite{WCD} to $s\geq -7/5$. Their proofs are based on Kenig, Ponce
and Vega's work \cite{KPVJAMS96}. In \cite{CLMW}, the authors proved
local well-posedness in $H^s$ for $s>-7/4$ by following the ideas of
$[k;\,Z]$-multiplier \cite{Taokz}. Modified Kawahara equation (with
nonlinear terms $u^2u_x$  in \eqref{kawa} instead of $uu_x$) was
also studied, for example see \cite{TC, CLMW}.

The purpose of this paper is to address the following two issues:
one is LWP at $H^{-7/4}$, the other is GWP in $H^s$ for $s<0$. Our
main motivation of this paper is inspired by \cite{CLMW} and
\cite{GuoKdV}. These two problems arise naturally in view of the
results for the Korteweg-de Vries equation. Compared to the KdV
equation, we will encounter a new difficulty. The equation
\eqref{kawa} doesn't have an invariant scaling transform. We will
use the following scaling transform: if $u(x,t)$ is a solution of
\eqref{kawa}, then for $\lambda>0$, $u_\lambda(x,t)=\lambda^4
u(\lambda x, \lambda^5t)$ is a solution to the following equation
\begin{align}\label{scaledkawa}
u_t+ \mu u_{xxx} + u_{xxxxx} + uu_x =0,\, u(x,0)=\phi(x),
\end{align}
where $\mu=\lambda^2\alpha$ and $\phi(x)=\lambda^4u_0(\lambda x)$.
Thus we see from $\norm{\lambda^4u_0(\lambda
x)}_{\dot{H}^s}=\lambda^{s+7/2}\norm{u_0}_{\dot{H}^s}$ that when
$s>-7/2$ we can assume $\norm{\phi}_{H^s}\ll 1$ by taking
$0<\lambda\ll 1$. Since $0<\lambda\leq 1$, heuristically the
equation \eqref{scaledkawa} has a uniform propagation speed in high
frequency. More generally, we study the following equation
\begin{align}\label{eq:dpde}
\partial_t u+Lu+uu_x=0, \quad u(x,0)=u_0(x),
\end{align}
here $L$ is a Fourier multiplier
\[\wh{Lf}(\xi)=-i\dr(\xi)\]
where the symbol $\dr: \R\rightarrow \R$ is an odd function, and
smooth on $\R\setminus\{0\}$. To study the well-posedness for
\eqref{eq:dpde} in the Sobolev space $H^s$, we will see that the
crucial things are related to the dispersive effect of the equation
\eqref{eq:dpde} in high frequency, since $H^s$ spaces have very good
low frequency structure.

\begin{definition}
Assume $\dr: \R\rightarrow \R$ is an odd function, and smooth on
$\R\setminus\{0\}$. For some $\alpha>0$, $\dr$ is said to have
$\alpha$-order dispersive effect at high frequency if for $|\xi|\ges
1$
\[|\partial_\xi^k \omega(\xi)|\sim |\xi|^{\alpha-k}, \
k=1,2;\quad |\partial_\xi^j \omega(\xi)|\les |\xi|^{\alpha-j}, \
j\geq 3.\] Moreover, we denote $\dr\in D_{hi}(\alpha)$.
\end{definition}

For example, the KdV equation corresponds to $\dr=\xi^3$, then $\dr
\in D_{hi}(3)$, for the Kawahara equation \eqref{scaledkawa}
considered in this paper $\dr=\mu \xi^3-\xi^5 \in D_{hi}(5)$
uniformly on $|\mu| \leq 1$. We consider first the L.W.P of
\eqref{kawa} in $H^{-7/4}$. Then it suffices to consider the
equation \eqref{scaledkawa} under the condition
\[|\mu|\leq 1, \quad \norm{\phi}_{H^{-7/4}}\ll 1.\]
We will use the $\bar{F}^s$ type space that was first used recently
by the second author \cite{GuoKdV}. But different from the KdV
equation, here for the local well-posedness a very weak low
frequency structure will work, since the dispersive effect of
\eqref{kawa} is very strong in high frequency. However, in order to
apply the I-method, we will use the same low frequency structure as
the KdV structure. We prove the following
\begin{theorem}\label{thm:lwp}
The Cauchy problem \eqref{kawa} is locally well-posed in $H^{-7/4}$.
\end{theorem}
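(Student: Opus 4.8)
The plan is to prove local well-posedness for the Kawahara equation in $H^{-7/4}$ via a fixed-point argument, following the $\bar{F}^s$-type function space framework of \cite{GuoKdV}. First I would reduce \eqref{kawa} to the scaled equation \eqref{scaledkawa} using the scaling transform $u_\lambda(x,t)=\lambda^4 u(\lambda x,\lambda^5 t)$ described above; since $s=-7/4>-7/2$, choosing $0<\lambda\ll 1$ lets me assume $|\mu|\le 1$ and $\norm{\phi}_{H^{-7/4}}\ll 1$, so it suffices to solve \eqref{scaledkawa} for small data with the symbol $\dr=\mu\xi^3-\xi^5\in D_{hi}(5)$ uniformly in $|\mu|\le 1$. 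The core dispersive gain is that $|\partial_\xi^2\dr(\xi)|\sim|\xi|^3$ for $|\xi|\ges 1$, which is what drives the smoothing and bilinear estimates.

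\medskip

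Next I would set up the resolution spaces adapted to the fifth-order dispersion. I expect to build dyadic Besov-type spaces $\bar{F}^s$ out of $X^{s,b}$-style blocks at scale $b=1/2$ on time-localized short intervals whose length depends on the frequency (a frequency-dependent time localization, as in \cite{GuoKdV}), paired with a dual space $\bar{N}^s$ to measure the Duhamel term $\int_0^t S(t-t')(uu_x)\,dt'$, where $S(t)$ is the linear propagator for $\partial_t+L$. Because the dispersion is order $5$ while the nonlinearity $uu_x=\half{1}\partial_x(u^2)$ only loses one derivative, the available smoothing is large; as the excerpt notes, this means only a \emph{very weak} low-frequency structure is needed for local well-posedness, so I would keep the low-frequency part of the norm as simple as possible and concentrate the effort at high frequency. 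The scheme of the argument is standard: establish (i) linear estimates placing the free evolution and the Duhamel operator into $\bar{F}^s$, and (ii) the key bilinear estimate
\begin{equation}\label{eq:bilin-plan}
\norm{\partial_x(uv)}_{\bar{N}^s}\les \norm{u}_{\bar{F}^s}\norm{v}_{\bar{F}^s},
\end{equation}
and then run the contraction mapping theorem on a ball in $\bar{F}^s$.

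\medskip

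The main obstacle is the bilinear estimate \eqref{eq:bilin-plan} at the endpoint $s=-7/4$. I would prove it by dyadic decomposition into frequency interactions $N_1,N_2\to N_3$ (low-low, high-low, and high-high), controlling each piece through the resonance function
\[
H(\xi_1,\xi_2)=\dr(\xi_1)+\dr(\xi_2)-\dr(\xi_1+\xi_2).
\]
Since $\dr\in D_{hi}(5)$, in the genuinely nonlinear high-high and high-low regimes one expects $|H|\sim \max(N_i)^4\min(N_i)$ away from the zero set of $H$, giving a strong modulation gain that one trades against the Sobolev and derivative losses; the delicate points are the low-modulation (near-resonant) regions where $H$ is small and the interactions where one input frequency is $O(1)$ so the $D_{hi}$ asymptotics degenerate. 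Near the resonances I expect to use an $L^2$ bilinear Strichartz/convolution estimate built from the second-derivative lower bound $|\partial_\xi^2\dr|\sim|\xi|^3$ (which controls the transversality of the two characteristic curves), while in the degenerate low-frequency zones I would exploit the smallness of the weight and the weak low-frequency structure to close by crude estimates. Because the scaling is noninvariant, I must also track the $\mu$-dependence and verify every constant is uniform for $|\mu|\le 1$; granting the bilinear estimate and the linear estimates, the contraction argument yields a unique solution in $\bar{F}^s$ and, after undoing the scaling, Theorem~\ref{thm:lwp}.
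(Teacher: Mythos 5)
Your plan follows essentially the same route as the paper: scale to small data with $|\mu|\le 1$, work in the $\ell^1$-modulation ($b=1/2$) Besov-type spaces $\bar{F}^s$ with a weak low-frequency component, prove linear and dyadic bilinear estimates organized by the resonance $|\Omega|\sim|\xi|_{max}^4|\xi|_{min}$, and close by contraction. One small correction of the setup: the paper's $X_k$ blocks carry no frequency-dependent time localization --- the structure that matters is the $\ell^1$ sum over modulation together with the $L_x^2L_t^\infty$ norm at output frequency $\lesssim 1$, which is precisely what removes the logarithmic divergence in the high-high $\to$ low interaction where the $s=-7/4$ restriction actually arises.
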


Next, we will extend the local solution to a global one, using the
ideas of I-method \cite{I-method}. Compared to the KdV equation, the
Kawahara equation has less symmetries. We will use the ideas in
\cite{GuoWang} to estimate the pointwise bounds of the multipliers.

\begin{theorem}\label{thm:gwp}
The Cauchy problem \eqref{kawa} is globally well-posed in $H^{s}$
for $s\geq -7/4$.
\end{theorem}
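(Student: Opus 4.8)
The plan is to run the I-method of \cite{I-method} on the scaled equation \eqref{scaledkawa}, combined with the local theory in the $\bar F^s$-type spaces of \cite{GuoKdV}. For $s\geq 0$ the statement is immediate from the conservation of the $L^2$ mass $\norm{u(t)}_{L^2}=\norm{u_0}_{L^2}$ together with Theorem \ref{thm:lwp}, so the main case is $-7/4\leq s<0$. Since \eqref{kawa} has no scaling invariance, I would first pass to \eqref{scaledkawa} via $u_\lambda(x,t)=\lambda^4u(\lambda x,\lambda^5 t)$; for $0<\lambda\leq1$ this keeps $\abs\mu=\lambda^2\abs\alpha\leq1$, so that $\dr=\mu\xi^3-\xi^5\in D_{hi}(5)$ uniformly, and by $\norm{\lambda^4u_0(\lambda\cdot)}_{\dot H^s}=\lambda^{s+7/2}\norm{u_0}_{\dot H^s}$ it lets me make the (smoothed) data as small as I wish. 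Introduce the Fourier multiplier $I=I_N$ with symbol $m_N(\xi)$ equal to $1$ for $\abs\xi\leq N$ and to $(\abs\xi/N)^{s}$ for $\abs\xi\geq2N$; then $m_N\leq1$ and $\norm{If}_{L^2}\les N^{-s}\norm{f}_{H^s}$, the role of $I$ being to produce an almost conserved substitute for the $L^2$ mass at regularity $s<0$.

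Next I would set up the modified energy $E(t)=\norm{Iu(t)}_{L^2}^2$ and compute its growth. Because $L$ is skew-adjoint the linear part drops out, and using the equation one finds
\begin{align*}
E(\delta)-E(0)=c\int_0^\delta\int_{\xi_1+\xi_2+\xi_3=0}\Big(\sum_{j=1}^3\xi_j\,m_N(\xi_j)^2\Big)\prod_{j=1}^3\wh{u}(\xi_j)\,d\Gamma\,dt.
\end{align*}
On the hyperplane $\xi_1+\xi_2+\xi_3=0$ one has $\sum_j\xi_j=0$, so the multiplier equals $\sum_j\xi_j\,(m_N(\xi_j)^2-1)$ and vanishes unless at least two of the frequencies are $\ges N$; this is exactly the cancellation responsible for conservation of mass when $I$ is the identity. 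The whole matter therefore reduces to the almost conservation estimate
\begin{align*}
\aabs{E(\delta)-E(0)}\les N^{-\theta}\,\norm{Iu_0}_{L^2}^2
\end{align*}
for some $\theta>0$, valid on a local existence interval $[0,\delta]$ with $\delta\sim1$ produced by the small-data local theory for the $I$-equation in $\bar F^0$.

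The core step, and the one I expect to be the main obstacle, is this trilinear estimate. I would dyadically decompose the three frequencies, observe that $\sum_j\xi_j(m_N(\xi_j)^2-1)$ is nonzero only in the high-high interaction regime where the two largest frequencies are comparable and $\ges N$, and bound it pointwise following the strategy of \cite{GuoWang}. The gain of $N^{-\theta}$ then comes from combining these pointwise multiplier bounds with the bilinear/trilinear smoothing in the $\bar F^s$ spaces, which in turn rests on a lower bound for the resonance function $\dr(\xi_1)+\dr(\xi_2)+\dr(\xi_3)$ on $\xi_1+\xi_2+\xi_3=0$. Here the lack of symmetry of \eqref{kawa} is felt: unlike the cubic resonance identity for KdV, for $\dr=\mu\xi^3-\xi^5$ the resonance function must be analyzed by hand and controlled uniformly for $\abs\mu\leq1$, so that the fifth-order dispersion dominates and yields the modulation gain even after the loss of derivative carried by the factor $\xi_j$.

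Finally I would iterate and undo the scaling. The almost conservation law lets $E$ increase by at most $\sim N^{-\theta}\norm{Iu_0}_{L^2}^2$ on each interval of length $\delta\sim1$, so $E$ stays below twice its initial value—keeping the data small enough to reapply the local theory—over $\sim N^{\theta}$ consecutive steps; hence the scaled solution persists up to scaled time $\sim N^{\theta}$. To cover the original interval $[0,T]$ we need the scaled solution on $[0,T/\lambda^5]$, so the method closes once $N^{\theta}\ges T\lambda^{-5}$. With the choice $\lambda\sim N^{s/(s+7/2)}$ forced by the smallness of $\norm{I\phi}_{L^2}$, this reads $N^{\theta-5\abs s/(s+7/2)}\ges T$, which holds for $N$ large precisely when the almost-conservation gain beats the scaling cost, i.e. $\theta>5\abs s/(s+7/2)$. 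Verifying that the trilinear estimate furnishes such a $\theta$ down to $s=-7/4$ (where the threshold equals $5$) is the quantitative heart of the argument; granting it, every target time is reached and global well-posedness in $H^s$ for all $s\geq-7/4$ follows.
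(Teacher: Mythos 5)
Your framework---rescale to \eqref{scaledkawa}, introduce $I=I_N$, prove an almost conservation law on a unit local interval, iterate, and undo the scaling---is the same as the paper's, and your bookkeeping correctly identifies the closing condition $N^{\theta}\ges T\lambda^{-5}$, i.e.\ $\theta>5\abs{s}/(s+7/2)$, which at the endpoint $s=-7/4$ demands $\theta>5$. But there is a genuine gap at exactly the point you flag as ``the quantitative heart'': you propose to obtain this $\theta$ from the increment of the \emph{second} modified energy $E_I^2(t)=\norm{Iu(t)}_{L^2}^2$, i.e.\ from a trilinear estimate on $\Lambda_3\big(\sum_j\xi_j m^2(\xi_j)\big)$. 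That estimate cannot produce a gain anywhere near $N^{-5}$: even with the full strength of the mean value bound $\abs{M_3}\les m^2(N_{max})N_{min}$ in the dominant high-high-low regime and the bilinear smoothing available in $\bar F^0$, the first-generation increment only decays like roughly $N^{-7/2}$ at $s=-7/4$, so the iteration count $\sim N^{\theta}$ falls short of the rescaled time $\lambda^{-5}T\sim N^5T$ and the argument does not close. (Your displayed almost conservation law is also cubic, not quadratic, in $\norm{Iu_0}_{L^2}$, but that is cosmetic.)

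What the paper actually does, and what your proposal omits, is the construction of the higher modified energies: $E_I^3=E_I^2+\Lambda_3(\sigma_3)$ with $\sigma_3=-M_3/(h_3-v_3)$ chosen to cancel the cubic increment (note the extra term $h_3$ from the third-order dispersion, absent in the pure KdV case, which must be cancelled as well), and then $E_I^4=E_I^3+\Lambda_4(\sigma_4)$ with $\sigma_4=-M_4/(h_4-v_4)$, so that $\frac{d}{dt}E_I^4=\Lambda_5(M_5)$. The pointwise bounds on $\sigma_3$, $\sigma_4$ and $M_5$ (obtained by extending the surface-supported multipliers off the hyperplane and using single and double mean value formulas, uniformly in $\abs{\mu}\leq1$), together with the quintilinear space-time estimate, yield $\aabs{\int_0^\delta\Lambda_5(M_5)\,dt}\les N^{-35/4}\norm{Iu}_{\bar F^0(\delta)}^5$; one also needs $\aabs{E_I^4-E_I^2}\les\norm{Iu}_{L^2}^3+\norm{Iu}_{L^2}^4$ to transfer the control back to $\norm{Iu}_{L^2}$. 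Since $35/4>5$, the iteration closes with $N\sim T^{4/15}$. Without this two-step upgrade of the modified energy, your argument stalls strictly above $s=-7/4$.
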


In the end of this section we give the notations and definitions. In
Section 2 we prove Theorem \ref{thm:lwp}. In Section 3 we give the
modified energy and pointwise multiplier estimates. In Section 4 we
prove Theorem \ref{thm:gwp}. In Section 5 we give an ill-posedness
result.

\noindent {\bf Notation and Definitions.} Throughout this paper we
fix $0<\mu\leq 1$. We will use $C$ and $c$ to denote constants which
are independent of $\mu$ and not necessarily the same at each
occurrence. For $x, y\in \R$, $x\sim y$ means that there exist $C_1,
C_2 > 0$ such that $C_1|x|\leq |y| \leq C_2|x|$. For $f\in \Sch'$ we
denote by $\widehat{f}$ or $\ft (f)$ the Fourier transform of $f$
for both spatial and time variables,
\begin{eqnarray*}
\widehat{f}(\xi, \tau)=\int_{\R^2}e^{-ix \xi}e^{-it \tau}f(x,t)dxdt.
\end{eqnarray*}
We denote  by $\ft_x$ the Fourier transform on spatial variable and
if there is no confusion, we still write $\ft=\ft_x$. Let
$\mathbb{Z}$ and $\mathbb{N}$ be the sets of integers and natural
numbers, respectively. $\Z_+=\N \cup \{0\}$. For $k\in \Z_+$ let
\[{I}_k=\{\xi: |\xi|\in [2^{k-1}, 2^{k+1}]\}, \ k\geq 1; \quad I_0=\{\xi: |\xi|\leq 2\}.\]
Let $\eta_0: \R\rightarrow [0, 1]$ denote an even smooth function
supported in $[-8/5, 8/5]$ and equal to $1$ in $[-5/4, 5/4]$. We
define $\psi(t)=\eta_0(t)$. For $k\in \Z$ let
$\eta_k(\xi)=\eta_0(\xi/2^k)-\eta_0(\xi/2^{k-1})$ if $k\geq 1$ and
$\eta_k(\xi)\equiv 0$ if $k\leq -1$. For $k\in \Z$ let
$\chi_k(\xi)=\eta_0(\xi/2^k)-\eta_0(\xi/2^{k-1})$. Roughly speaking,
$\{\chi_k\}_{k\in \mathbb{Z}}$ is the homogeneous decomposition
function sequence and $\{\eta_k\}_{k\in \mathbb{Z}_+}$ is the
non-homogeneous decomposition function sequence to the frequency
space. For $k\in \Z$ let $P_k$ denote the operator on $L^2(\R)$
defined by
\[
\widehat{P_ku}(\xi)=\eta_k(\xi)\widehat{u}(\xi).
\]
By a slight abuse of notation we also define the operator $P_k$ on
$L^2(\R\times \R)$ by the formula $\ft(P_ku)(\xi,
\tau)=\eta_k(\xi)\ft (u)(\xi, \tau)$. For $l\in \Z$ let
\[
P_{\leq l}=\sum_{k\leq l}P_k, \quad P_{\geq l}=\sum_{k\geq l}P_k.
\]
Thus we see that $P_{\leq 0}=P_0$.

Let
\begin{align}\label{eq:dr}
\dr(\xi)=\mu\xi^3-\xi^5
\end{align}
be dispersion relation associated to equation \eqref{scaledkawa}.
For $\phi\in \Sch'(\R)$, we denote by $W(t)\phi$ the linear solution
of \eqref{scaledkawa} which is defined by
\[
\ft_x(W(t)\phi)(\xi)=\exp[i\dr(\xi)t]\widehat{\phi}(\xi), \  \forall
\ t\in \R.
\]
 We define the Lebesgue spaces $L_{t\in I}^qL_x^p$
and $L_x^pL_{t\in I}^q$ by the norms
\begin{equation}
\norm{f}_{L_{t\in I}^qL_x^p}=\normo{\norm{f}_{L_x^p}}_{L_t^q(I)},
\quad \norm{f}_{L_x^pL_{t\in
I}^q}=\normo{\norm{f}_{L_t^q(I)}}_{L_x^p}.
\end{equation}
If $I=\R$ we simply write $L_{t}^qL_x^p$ and $L_x^pL_{t}^q$. We will
make use of the $X^{s,b}$ norm associated to equation
\eqref{scaledkawa} which is given by
\begin{eqnarray*}
\norm{u}_{X^{s,b}}=\norm{\jb{\tau-\dr(\xi)}^b\jb{\xi}^s\widehat{u}(\xi,\tau)}_{L^2(\R^2)},
\end{eqnarray*}
where $\jb{\cdot}=(1+|\cdot|^2)^{1/2}$. The spaces $X^{s,b}$ turn
out to be very useful in the study of low-regularity theory for the
dispersive equations. These spaces were first used to systematically
study nonlinear dispersive wave problems by Bourgain \cite{Bou} and
developed by Kenig, Ponce and Vega \cite{KPVJAMS96} and Tao
\cite{Taokz}. Klainerman and Machedon \cite{KlMa} used similar ideas
in their study of the nonlinear wave equation.

In applications we usually apply $X^{s,b}$ space for $b$ very close
to $1/2$. In the case $b=1/2$ one has a good substitute-$l^1$ type
$X^{s,b}$ space. For $k\in \Z_+$ we define the dyadic $X^{s,b}$-type
normed spaces $X_k=X_k(\R^2)$,
\begin{eqnarray}
X_k=\left\{f\in L^2(\R^2):
\begin{array}{l}
f(\xi,\tau) \mbox{ is supported in } I_k\times\R \mbox{ and }\\
\norm{f}_{X_k}=\sum_{j=0}^\infty
2^{j/2}\norm{\eta_j(\tau-\dr(\xi))\cdot f}_{L^2}.
\end{array}
\right\}
\end{eqnarray}
Then we define the $l^1$-analogue of $X^{s,b}$ space ${F}^s$ by
\begin{eqnarray}
\norm{u}_{{F}^s}^2=\sum_{k \geq
0}2^{2sk}\norm{\eta_k(\xi)\ft(u)}_{X_k}^2.
\end{eqnarray}
Structures of this kind of spaces were introduced, for instance, in
\cite{Tata}, \cite{IKT} and \cite{In-Ke} for the BO equation. The
space $F^s$ is better than $X^{s,1/2}$ in many situations for
several reasons (see \cite{GuoKdV, GuoWang}). From the definition of
$X_k$, we see that for any $l\in \Z_+$ and $f_k\in X_k$ (see also
\cite{IKT}),
\begin{align}
\sum_{j=0}^\infty 2^{j/2}\normo{\eta_j(\tau-\dr(\xi))\int
|f_k(\xi,\tau')|2^{-l}(1+2^{-l}|\tau-\tau'|)^{-4}d\tau'}_{L^2}\les
\norm{f_k}_{X_k}.
\end{align}
Hence for any $l\in \Z_+$, $t_0\in \R$, $f_k\in X_k$, and $\gamma
\in \Sch(\R)$, then
\begin{align}\label{eq:propertyXk2}
\norm{\ft[\gamma(2^l(t-t_0))\cdot \ft^{-1}f_k]}_{X_k}\les
\norm{f_k}_{X_k}.
\end{align}
In order to avoid some logarithmic divergence, we need to use a
weaker norm for the low frequency
\begin{align*}
\norm{u}_{\bar{X}_0}=\norm{u}_{L_x^2L_t^\infty}.
\end{align*}
Actually a weaker structure will suffice for LWP. However, we will
need this strong structure to extend it. It is easy to see from
Proposition \ref{Xkembedding} that
\begin{eqnarray}\label{eq:xbar0x0}
\norm{\eta_0(t)P_{\leq 0}u}_{\bar{X}_0}\les \norm{P_{\leq
0}u}_{{X}_0}.
\end{eqnarray}
On the other hand, for any $1\leq q\leq \infty$ and $2\leq r\leq
\infty$ we have
\begin{eqnarray}\label{eq:Xbar0Lqr}
\norm{P_{\leq0}u}_{L_{|t|\leq T}^qL_x^r\cap L_x^rL_{|t|\leq
T}^q}\les_T \norm{P_{\leq0}u}_{L_x^2L_{|t|\leq T}^\infty}.
\end{eqnarray}
For $-7/4\leq s\leq 0$, we define the our resolution spaces
\begin{eqnarray*}
\bar{F}^s=\{u\in \Sch'(\R^2):\norm{u}_{\bar{F}^s}^2=\sum_{k \geq
1}2^{2sk}\norm{\eta_k(\xi)\ft(u)}_{X_k}^2+\norm{P_{\leq
0}(u)}_{\bar{X}_0}^2<\infty\}.
\end{eqnarray*}
For $T\geq 0$, we define the time-localized spaces $\bar{F}^{s}(T)$:
\begin{eqnarray}
\norm{u}_{\bar{F}^{s}(T)}=\inf_{w\in \bar{F}^{s}}\{\norm{P_{\leq
0}u}_{L_x^2L_{|t|\leq T}^\infty}+\norm{P_{\geq 1}w}_{\bar{F}^{s}}, \
w(t)=u(t) \mbox{ on } [-T, T]\}.
\end{eqnarray}

Let $a_1, a_2, a_3\in \R$. It will be convenient to define the
quantities $a_{max}\geq a_{med}\geq a_{min}$ to be the maximum,
median, and minimum of $a_1,a_2,a_3$ respectively. Usually we use
$k_1,k_2,k_3$ and $j_1,j_2,j_3$ to denote integers, $N_i=2^{k_i}$
and $L_i=2^{j_i}$ for $i=1,2,3$ to denote dyadic numbers.

\section{L.W.P. at $H^{-7/4}$}

To prove LWP by using $X^{s,b}$-method, the argument is standard.
The first step is to prove a linear estimate, for its proof we refer
the readers to \cite{GuoWang}.

\begin{proposition}[Linear estimates]\label{proplineares}
(a) Assume $s\in \R$ and  $\phi \in H^s$. Then there exists $C>0$
such that
\begin{eqnarray}
\norm{\psi(t)W(t)\phi}_{\bar{F}^s}\leq C\norm{\phi}_{H^{s}}.
\end{eqnarray}

(b) Assume $s\in \R, k\in \Z_+$ and $u$ satisfies
$(i+\tau-\dr(\xi))^{-1}\ft(u)\in X_k$. Then there exists $C>0$ such
that
\begin{eqnarray}
\normo{\ft\left[\psi(t)\int_0^t W(t-s)(u(s))ds\right]}_{X_k}\leq
C\norm{(i+\tau-\dr(\xi))^{-1}\ft(u)}_{X_k}.
\end{eqnarray}
\end{proposition}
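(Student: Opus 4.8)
The plan is to treat parts (a) and (b) separately, in each case reducing to the dyadic building blocks of $\bar F^s$ and $X_k$ and exploiting the Schwartz decay of $\wh{\psi}$ together with the $l^1$-in-$j$ structure of $X_k$; the whole argument follows \cite{GuoWang}.

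For part (a), I would first record the space-time Fourier transform
\[
\ft(\psi(t)W(t)\phi)(\xi,\tau)=\wh{\phi}(\xi)\,\wh{\psi}(\tau-\dr(\xi)),
\]
so that on each dyadic piece $\eta_k(\xi)\ft(\psi(t)W(t)\phi)$ the modulation variable $\tau-\dr(\xi)$ is decoupled from $\xi$. For fixed $k$ I would bound the $X_k$ norm via the change of variables $\sigma=\tau-\dr(\xi)$, which factors the $L^2$ integral into $\norm{\eta_k(\xi)\wh{\phi}}_{L^2_\xi}$ times $\big(\int\eta_j(\sigma)^2|\wh{\psi}(\sigma)|^2\,d\sigma\big)^{1/2}\les 2^{(1/2-N)j}$ for any $N$, using that $\psi=\eta_0$ is Schwartz. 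Summing $\sum_j 2^{j/2}2^{(1/2-N)j}$ then gives $\norm{\eta_k(\xi)\ft(\psi(t)W(t)\phi)}_{X_k}\les\norm{P_k\phi}_{L^2}$, and summing against $2^{2sk}$ over $k\ge 1$ produces the high-frequency part of $\norm{\cdot}_{\bar F^s}$. For the low-frequency piece $\norm{P_{\le 0}(\psi(t)W(t)\phi)}_{\bar X_0}=\norm{P_0(\psi(t)W(t)\phi)}_{L_x^2L_t^\infty}$ I would use the embedding behind \eqref{eq:xbar0x0} (Proposition \ref{Xkembedding}) to dominate it by $\norm{\eta_0(\xi)\ft(\psi(t)W(t)\phi)}_{X_0}$, and then reuse the same computation at $k=0$, where $2^{sk}=1$ and $\jb\xi\sim 1$.

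For part (b), I would compute the Duhamel term in Fourier variables,
\[
\ft_x\Big[\int_0^t W(t-s)u(s)\,ds\Big](\xi,t)=\int \ft(u)(\xi,\tau')\,\frac{e^{it\tau'}-e^{it\dr(\xi)}}{i(\tau'-\dr(\xi))}\,d\tau',
\]
and then analyze the multiplier $\psi(t)\,[e^{it\tau'}-e^{it\dr(\xi)}]/[i(\tau'-\dr(\xi))]$ according to the size of $\tau'-\dr(\xi)$. When $|\tau'-\dr(\xi)|\les 1$ I would Taylor-expand $e^{it(\tau'-\dr(\xi))}-1$, turning the kernel into a sum of terms $\psi(t)t^n e^{it\dr(\xi)}(\tau'-\dr(\xi))^{n-1}$ with $\psi(t)t^n$ Schwartz and the factor $(\tau'-\dr(\xi))^{n-1}$ bounded; these contribute convolutions in $\tau$ against $\wh{\psi_n}(\tau-\dr(\xi))$, which are controlled in $X_k$ by the convolution estimate displayed just before \eqref{eq:propertyXk2}. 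When $|\tau'-\dr(\xi)|\ges 1$ I would split off the two terms $e^{it\tau'}$ and $e^{it\dr(\xi)}$, whose time Fourier transforms after multiplication by $\psi$ are $\wh{\psi}(\tau-\tau')$ and $\wh{\psi}(\tau-\dr(\xi))$; since here $1/(\tau'-\dr(\xi))\sim 1/(i+\tau'-\dr(\xi))$, both terms are governed by the weighted forcing $(i+\tau-\dr(\xi))^{-1}\ft(u)$, which is exactly the right-hand side.

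The main obstacle, and the reason the $l^1$-based space $X_k$ rather than a plain $X^{s,b}$ is needed, is the \emph{resonant} contribution $\wh{\psi}(\tau-\dr(\xi))\int \ft(u)(\xi,\tau')/[i(\tau'-\dr(\xi))]\,d\tau'$ coming from the $e^{it\dr(\xi)}$ term in the $|\tau'-\dr(\xi)|\ges 1$ regime: it deposits all of the forcing at modulation $\tau-\dr(\xi)=O(1)$, so one must verify that $\int|\ft(u)(\xi,\tau')|/|i+\tau'-\dr(\xi)|\,d\tau'$ is controlled in $L^2_\xi$ by $\norm{(i+\tau-\dr(\xi))^{-1}\ft(u)}_{X_k}$. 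I would do this by Cauchy--Schwarz in $\tau'$ on each modulation annulus $|\tau'-\dr(\xi)|\sim 2^j$ and summing $\sum_j 2^{j/2}\cdot 2^{-j}=\sum_j 2^{-j/2}<\infty$; the convergence of this sum is precisely what the $\sum_j 2^{j/2}$ normalization in $X_k$ is designed to furnish, and it is here that a bare $b=1/2$ estimate would diverge logarithmically.
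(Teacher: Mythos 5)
Your argument is correct and is essentially the proof the paper invokes: the paper does not reproduce a proof but refers to \cite{GuoWang}, where exactly this scheme is carried out — for (a) the factorization $\ft(\psi W(t)\phi)=\wh\phi(\xi)\wh\psi(\tau-\dr(\xi))$ plus the Schwartz decay of $\wh\psi$ against the $\sum_j 2^{j/2}$ weights, and for (b) the splitting of $\psi(t)[e^{it\tau'}-e^{it\dr(\xi)}]/[i(\tau'-\dr(\xi))]$ into the Taylor-expanded region $|\tau'-\dr(\xi)|\les 1$ and the two non-resonant/resonant pieces for $|\tau'-\dr(\xi)|\ges 1$, with the resonant piece absorbed by Cauchy--Schwarz on modulation annuli and the $l^1$-in-$j$ summation. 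The only point worth tightening is the low-frequency step in (a): since $\psi=\eta_0$, applying \eqref{eq:xbar0x0} literally to $u=\psi(t)W(t)\phi$ produces $\eta_0(t)\psi(t)\neq\psi(t)$, so one should either insert a slightly wider time cutoff $\tilde\psi$ with $\eta_0\tilde\psi=\eta_0$ or invoke the local maximal-function estimate for $W(t)P_{\leq 0}$ directly; this is cosmetic and does not affect the proof.
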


Then the remaining task is to show bilinear estimates. We will need
symmetric estimates which will be used to prove bilinear estimates.
For $\xi_1,\xi_2 \in \R$ and $\omega:\R \rightarrow \R$ as in
\eqref{eq:dr} let
\begin{equation}\label{eq:reso}
\Omega(\xi_1,\xi_2)=\omega(\xi_1)+\omega(\xi_2)-\omega(\xi_1+\xi_2).
\end{equation}
This is the resonance function that plays a crucial role in the
bilinear estimate of the $X^{s,b}$-type space. See \cite{Taokz} for
a perspective discussion. For compactly supported nonnegative
functions $f,g,h\in L^2(\R\times \R)$ let
\begin{align*}
J(f,g,h)=\int_{\R^4}f(\xi_1,\mu_1)g(\xi_2,\mu_2)h(\xi_1+\xi_2,\mu_1+\mu_2+\Omega(\xi_1,\xi_2))d\xi_1d\xi_2d\mu_1d\mu_2.
\end{align*}
We will apply to function $f_{k_i,j_i}\in L^2(\R\times \R)$ are
nonnegative functions supported in $[2^{k_i-1},2^{k_i+1}]\times
\widetilde{I}_{j_i}, \ i=1,\ 2,\ 3$. It is easy to see that
$J({f_{k_1,j_1},f_{k_2,j_2},f_{k_3,j_3}})\equiv 0$ unless
\begin{align}\label{didentity}
|k_{med}-k_{max}|\leq 5, \quad 2^{j_{max}}\sim
\max(2^{j_{med}},|\Omega(\xi_1,\xi_2)|).
\end{align}
We give an estimate on the resonance in the following proposition
that follows from the fundamental calculus theorem.

\begin{proposition}\label{propesreso}
Assume $\max(|\xi_1|,|\xi_2|,|\xi_1+\xi_2|)\geq 10$. Then
\[|\Omega(\xi_1,\xi_2)|\sim |\xi|_{max}^4|\xi|_{min},\]
where
\[|\xi|_{max}=\max(|\xi_1|,|\xi_2|,|\xi_1+\xi_2|),\quad |\xi|_{min}=\min(|\xi_1|,|\xi_2|,|\xi_1+\xi_2|).\]
\end{proposition}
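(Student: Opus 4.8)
The plan is to reduce the estimate to an explicit algebraic factorization of $\Omega$ and then read off the size of each factor. First I would exploit that $\omega$ is odd to symmetrize the resonance: setting $\xi_3=-(\xi_1+\xi_2)$ so that $\xi_1+\xi_2+\xi_3=0$, the relation $\omega(\xi_1+\xi_2)=-\omega(\xi_3)$ gives $\Omega(\xi_1,\xi_2)=\omega(\xi_1)+\omega(\xi_2)+\omega(\xi_3)$. Since $\omega(\xi)=\mu\xi^3-\xi^5$, this reduces the problem to evaluating the power sums $\xi_1^3+\xi_2^3+\xi_3^3$ and $\xi_1^5+\xi_2^5+\xi_3^5$ on the plane $\xi_1+\xi_2+\xi_3=0$. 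Newton's identities (or a direct computation) under the constraint $e_1:=\xi_1+\xi_2+\xi_3=0$ yield $\xi_1^3+\xi_2^3+\xi_3^3=3\xi_1\xi_2\xi_3$ and $\xi_1^5+\xi_2^5+\xi_3^5=-5\,\xi_1\xi_2\xi_3\,e_2$, where $e_2=\xi_1\xi_2+\xi_2\xi_3+\xi_3\xi_1=-\tfrac12(\xi_1^2+\xi_2^2+\xi_3^2)$. Substituting produces the clean factorization
\[
\Omega(\xi_1,\xi_2)=\xi_1\xi_2\xi_3\bigl(3\mu+5e_2\bigr)=\xi_1\xi_2\xi_3\Bigl(3\mu-\tfrac52(\xi_1^2+\xi_2^2+\xi_3^2)\Bigr).
\]

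Next I would estimate the two factors separately. Because $\xi_1+\xi_2+\xi_3=0$, the two largest of $|\xi_1|,|\xi_2|,|\xi_3|$ must be comparable: writing $|\xi|_{med}$ for the median of $|\xi_1|,|\xi_2|,|\xi_1+\xi_2|$, one has $|\xi|_{max}\leq|\xi|_{med}+|\xi|_{min}\leq 2|\xi|_{med}$, so $|\xi|_{max}\sim|\xi|_{med}$. Since $|\xi_3|=|\xi_1+\xi_2|$, the multiset $\{|\xi_1|,|\xi_2|,|\xi_3|\}$ equals $\{|\xi|_{max},|\xi|_{med},|\xi|_{min}\}$, whence $|\xi_1\xi_2\xi_3|=|\xi|_{max}|\xi|_{med}|\xi|_{min}\sim|\xi|_{max}^2|\xi|_{min}$. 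The symmetric quadratic satisfies $|\xi|_{max}^2\leq\xi_1^2+\xi_2^2+\xi_3^2\leq 3|\xi|_{max}^2$, so $|e_2|\sim|\xi|_{max}^2$. It then remains only to control the scalar $3\mu+5e_2$ uniformly in $\mu$, and this is exactly where the hypothesis $|\xi|_{max}\geq 10$ enters: since $0<\mu\leq 1$ we have $3\mu\leq 3$, while $5|e_2|\geq\tfrac52|\xi|_{max}^2\geq 250$, so the quintic term dominates and $|3\mu+5e_2|\sim|e_2|\sim|\xi|_{max}^2$. Multiplying the two estimates gives $|\Omega|\sim|\xi|_{max}^2|\xi|_{min}\cdot|\xi|_{max}^2=|\xi|_{max}^4|\xi|_{min}$, as claimed.

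The only genuine obstacle is the uniformity in $\mu$, namely the potential cancellation between the cubic dispersion $\mu\xi^3$ and the quintic dispersion $-\xi^5$; the factorization isolates this into the single scalar factor $3\mu+5e_2$, and the frequency threshold $|\xi|_{max}\geq 10$ together with $\mu\leq 1$ rules out any degeneracy, so the implicit constants are independent of $\mu$. One can equivalently reach the factorization through the fundamental theorem of calculus, writing $-\Omega(\xi_1,\xi_2)=\int_0^{\xi_1}\int_0^{\xi_2}\omega''(s+t)\,ds\,dt$ with $\omega''(\xi)=6\mu\xi-20\xi^3$; but the sign changes of $\omega''$ make the algebraic identity the more transparent route, since it exhibits the cancellation structure directly and keeps the $\mu$-dependence explicit.
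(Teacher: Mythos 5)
Your proof is correct: the factorization $\Omega=\xi_1\xi_2\xi_3\bigl(3\mu-\tfrac52(\xi_1^2+\xi_2^2+\xi_3^2)\bigr)$ on $\xi_1+\xi_2+\xi_3=0$ follows from the stated power-sum identities, the comparability $|\xi|_{med}\sim|\xi|_{max}$ is forced by the zero-sum constraint, and the threshold $|\xi|_{max}\geq 10$ together with $|\mu|\leq 1$ makes $\tfrac52\sum\xi_i^2\geq 250$ swamp the $3\mu$ term, so each factor has the claimed size uniformly in $\mu$. The route is, however, not the one the paper indicates: the paper offers no written proof and merely asserts that the proposition ``follows from the fundamental calculus theorem,'' i.e.\ from the representation $\Omega(\xi_1,\xi_2)=-\int_0^{\xi_1}\int_0^{\xi_2}\omega''(s+t)\,ds\,dt$ combined with the two-sided bound $|\omega''(\xi)|\sim|\xi|^3$ for $|\xi|\gtrsim 1$ (this is the $D_{hi}(5)$ hypothesis); that argument has the advantage of working for a general symbol in $D_{hi}(\alpha)$, but as you note it requires some care with the sign changes of $\omega''$ near the origin and with the case where the integration range meets low frequencies. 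Your algebraic argument is specific to $\omega=\mu\xi^3-\xi^5$ but is fully explicit, isolates the only possible degeneracy (cancellation between the cubic and quintic dispersion) into the single scalar $3\mu+5e_2$, and is in fact the same device the paper itself deploys later in Lemma 3.2 to control the multipliers $\sigma_3,\sigma_4$; so it is a legitimate and arguably more transparent substitute here.
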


In \cite{GuodgBO} the author actually proved the following
proposition, also see the second author's doctoral thesis (P33-34,
\cite{GuoPhD}).

\begin{lemma}\label{lemsymes}
Let $\alpha>1$. Assume $\dr\in D_{hi}(\alpha)$ and $k_i \in \Z$,
$j_i\in \Z_+$, $N_i=2^{k_i},L_i=2^{j_i}$ for $i=1,2,3$. Let
$f_{k_i,j_i}\in L^2(\R\times \R)$ are nonnegative functions
supported in $[2^{k_i-1},2^{k_i+1}]\times \widetilde{I}_{j_i}, \
i=1,\ 2,\ 3$. Then

(a) For any $k_1, k_2, k_3\in \Z$ and $j_1,j_2,j_3\in \Z_+$,
\begin{align}\label{eq:lemsymesa}
J(f_{k_1,j_1},f_{k_2,j_2},f_{k_3,j_3})\leq C
2^{j_{min}/2}2^{k_{min}/2} \prod_{i=1}^3\norm{f_{k_i,j_i}}_{L^2}.
\end{align}

(b) If $N_{min}\ll N_{med}\sim N_{max}$ and $(k_i,j_i)\neq
(k_{min},j_{max})$ for all $i=1,2,3$,
\begin{align}\label{eq:lemsymesb}
J(f_{k_1,j_1},f_{k_2,j_2},f_{k_3,j_3})\leq C
2^{(j_{1}+j_{2}+j_3)/2}2^{-(\alpha-2)k_{max}/2}2^{-(j_i+k_i)/2}\prod_{i=1}^3\norm{f_{k_i,j_i}}_{L^2}.
\end{align}

(c) For any $k_1,k_2,k_3\in \Z$ with $N_{min}\sim N_{med}\sim
N_{max}\gg 1$ and $j_1,j_2,j_3\in \Z_+$
\begin{align}\label{eq:lemsymesc}
J(f_{k_1,j_1},f_{k_2,j_2},f_{k_3,j_3})\leq C
2^{j_{min}/2}2^{j_{med}/4}2^{-(\alpha-2) k_{max}/4}
\prod_{i=1}^3\norm{f_{k_i,j_i}}_{L^2}.
\end{align}
\end{lemma}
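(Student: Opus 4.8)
The plan is to bound the trilinear form $J$ by separating the modulation and frequency integrations, estimating each by a one-dimensional three-interval convolution bound, and then upgrading the frequency step in the resonant cases by a change of variables to the resonance variable. Throughout I use the support constraints \eqref{didentity}, the resonance estimate $|\Omega(\xi_1,\xi_2)|\sim |\xi|_{max}^{\alpha-1}|\xi|_{min}$ from Proposition \ref{propesreso} (for $\omega\in D_{hi}(\alpha)$), and the relation $\mu_1+\mu_2-\mu_3=-\Omega(\xi_1,\xi_2)$ valid on the support of the integrand, where $\mu_3:=\mu_1+\mu_2+\Omega$ is the modulation variable of $f_{k_3,j_3}$. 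The only elementary input I need is that for nonnegative $g_1,g_2,g_3\in L^2(\R)$ supported in intervals of lengths $\ell_1,\ell_2,\ell_3$ one has $\int g_1(s)g_2(t)g_3(s+t)\,ds\,dt\lesssim(\min_i\ell_i)^{1/2}\prod_i\|g_i\|_{L^2}$, which is immediate from writing the left side as $\langle g_3,g_1\ast g_2\rangle$ and applying Young's inequality and Cauchy--Schwarz.

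For part (a) I would first freeze $\xi_1,\xi_2$, hence $\Omega$, and integrate in $\mu_1,\mu_2$. The $\mu$-integral is a shifted convolution of three functions supported in intervals of length $2^{j_i}$, so the elementary bound contributes the factor $2^{j_{min}/2}$ and leaves $\prod_i G_i$, where $G_i(\xi_i):=\|f_{k_i,j_i}(\xi_i,\cdot)\|_{L^2_\mu}$. What remains, $2^{j_{min}/2}\int G_1(\xi_1)G_2(\xi_2)G_3(\xi_1+\xi_2)\,d\xi_1\,d\xi_2$, is again a three-interval convolution, now in the $\xi$-variables of lengths $2^{k_i}$, so it contributes $2^{k_{min}/2}$; since $\|G_i\|_{L^2_\xi}=\|f_{k_i,j_i}\|_{L^2}$ this gives \eqref{eq:lemsymesa}.

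For parts (b) and (c) I would keep a Cauchy--Schwarz in one variable but replace the trivial frequency-convolution bound by a change of variables that exploits the large derivative of the resonance. For fixed $\xi_2$ the map $\xi_1\mapsto\Omega(\xi_1,\xi_2)$ has Jacobian $\partial_{\xi_1}\Omega=\omega'(\xi_1)-\omega'(\xi_1+\xi_2)$, which under the stated frequency separation and the $D_{hi}(\alpha)$ bounds is comparable to $|\xi|_{max}^{\alpha-1}\sim 2^{(\alpha-1)k_{max}}$. After dualizing the appropriate factor and using the relation $\mu_3=\mu_1+\mu_2+\Omega$ together with \eqref{didentity} to confine $\Omega$ to a set controlled by the modulation widths, this change of variables trades frequency measure for a power of $2^{k_{max}}$; carrying it out once yields the gain $2^{-(\alpha-2)k_{max}/2}$ of \eqref{eq:lemsymesb}, and a symmetrized two-step version distributes the gain and the residual modulation measure into the quarter-powers $2^{j_{med}/4}2^{-(\alpha-2)k_{max}/4}$ of \eqref{eq:lemsymesc}. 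The hypothesis $(k_i,j_i)\neq(k_{min},j_{max})$ in (b) is precisely what lets me dualize the function of largest modulation without it being the low-frequency one, so that the remaining two-function measure estimate still sees a frequency of size $2^{k_{max}}$ to change variables in.

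The step I expect to be the main obstacle is part (c), the fully resonant interaction $N_{min}\sim N_{med}\sim N_{max}\gg1$. Here there is no genuinely small frequency to integrate cheaply, so the variables cannot be separated as in (a); one must perform the change of variables to $\Omega$ while simultaneously controlling the two remaining modulation integrations by a double Cauchy--Schwarz, and check that the Jacobian $\partial_\xi\Omega$ degenerates only on a negligible set --- this is exactly where Proposition \ref{propesreso} and the higher-derivative bounds $|\partial_\xi^j\omega|\lesssim|\xi|^{\alpha-j}$, $j\ge3$, are needed. Tracking the exact exponents through this double Cauchy--Schwarz, in particular the emergence of the quarter-powers rather than half-powers, is the delicate bookkeeping, and is why the estimate is isolated as a lemma (and cited from \cite{GuodgBO,GuoPhD}) rather than derived directly from (a).
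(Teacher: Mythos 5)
First, note that the paper does not prove this lemma at all: it is quoted verbatim from \cite{GuodgBO} and \cite{GuoPhD}, so there is no in-text argument to compare against. Your overall architecture (Cauchy--Schwarz in one function, then a measure estimate on the remaining two-function set obtained by changing variables to the resonance $\Omega$) is indeed the method of the cited source, and your proof of part (a) is complete and correct.

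For parts (b) and (c), however, there are concrete gaps in the way you handle the Jacobian. In (b) you assert that $\partial_{\xi_1}\Omega=\omega'(\xi_1)-\omega'(\xi_1+\xi_2)$ is comparable to $2^{(\alpha-1)k_{max}}$ ``under the stated frequency separation,'' but this is only true when the two frequencies appearing in the difference are one small and one large. The version of \eqref{eq:lemsymesb} that the paper actually invokes (e.g.\ in Propositions \ref{prophighlow} and \ref{prophighhigh}, with the factor $2^{-k_1/2}$, $k_1=k_{min}$) is the one where you dualize the \emph{low}-frequency function; the surviving integration variable then sees two nearly opposite high frequencies $\xi_l\approx-\xi_m$, and since $\omega'$ is even the leading terms cancel: $|\omega'(\xi_l)-\omega'(\xi_m)|\sim|\xi_{min}|\,|\omega''(\xi_l)|\sim 2^{k_{min}}2^{(\alpha-2)k_{max}}$, which is exactly what produces $2^{-(\alpha-2)k_{max}/2}2^{-k_i/2}$ with $k_i=k_{min}$ and requires the two-sided bound on $\omega''$ from the definition of $D_{hi}(\alpha)$. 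Your sketch covers only the non-degenerate case $k_i=k_{max}$. In (c) the same degeneration is unavoidable: with all three frequencies comparable, $\partial_{\xi_1}\Omega=\omega'(\xi_1)-\omega'(\xi_2)$ vanishes on the diagonal $\xi_1=\xi_2$, and the correct mechanism is not that this set is ``negligible'' but that $\partial_{\xi_1}^2\Omega=\omega''(\xi_1)+\omega''(\xi_2)\sim 2^{(\alpha-2)k_{max}}$ is bounded below, so the sublevel set $\{\xi_1:\Omega\in I,\ |I|\sim 2^{j_{med}}\}$ has measure $\lesssim(2^{j_{med}}2^{-(\alpha-2)k_{max}})^{1/2}$ (van der Corput, or equivalently a dyadic decomposition in $|\xi_1-\xi_2|$). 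This square root is precisely where the quarter powers $2^{j_{med}/4}2^{-(\alpha-2)k_{max}/4}$ come from; your write-up flags them as bookkeeping but supplies no mechanism that would produce them, so as written the proposal does not establish the stated exponents in (b) (for the index actually used) or in (c).
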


In \cite{CLMW} the authors proved a similar results for
$\dr(\xi)=\mu\xi^3-\xi^5$. However, there seems to be some error in
the $high\times high \rightarrow high$ case in their proof. The main
reason is a wrong estimate on the measure of a set. Nevertheless,
this error doesn't change their LWP results for $s>-7/4$ because
this case is not the worst case for the restriction. More
explicitly, we give a counterexample below which shows part (c) in
Lemma \ref{lemsymes} is sharp for $\dr(\xi)=\mu\xi^3-\xi^5$. It
suffices to show that for some $f_{k_1,j_1}, f_{k_2,j_2},
f_{k_3,j_3}$
\begin{align}
J(f_{k_1,j_1},f_{k_2,j_2},f_{k_3,j_3})\geq C
2^{j_{min}/2}2^{j_{med}/4}2^{-3 k_{max}/4}
\prod_{i=1}^3\norm{f_{k_i,j_i}}_{L^2}.
\end{align}
We use the ideas of ``Knapp example" as in the KdV case
\cite{Taokz}. We may assume $L_1\leq L_2\leq L_3$. Let
\begin{align*}
f_1(\xi,\tau)&=1_{|\xi-N_1|\les
N_1^{-3/2}L_2^{1/2}}\cdot 1_{|\tau-\dr(\xi)|\les L_1},\\
f_2(\xi,\tau)&=1_{|\xi-N_1|\les
N_1^{-3/2}L_2^{1/2}}\cdot 1_{|\tau-\dr(\xi)|\les L_2},\\
f_3(\xi,\tau)&=1_{|\xi-2N_1|\les
N_1^{-3/2}L_2^{1/2}}\cdot 1_{|\tau-\frac{\mu
\xi^3}{4}+\frac{\xi^5}{16}|\les L_2}.
\end{align*}
Then we take $f_{k_i,j_i}=f_i$ for $i=1,2,3$. It is easy to see that
$f_i$ satisfy the support properties, and
\[\prod_{i=1}^3\norm{f_{k_i,j_i}}_{L^2}\sim N_1^{-9/4}L_1^{1/2}L_2^{7/4}.\]
On the other hand, by the calculation we get
\[\xi_1^3+\xi_2^3=\frac{(\xi_1+\xi_2)^3}{4}+\frac{3(\xi_1+\xi_2)(\xi_1-\xi_2)^2}{4}\]
and
\[\xi_1^5+\xi_2^5=\frac{(\xi_1+\xi_2)^5}{16}+\frac{5(\xi_1+\xi_2)^3(\xi_1-\xi_2)^2}{8}+\frac{5(\xi_1+\xi_2)
(\xi_1-\xi_2)^4}{16},\] thus it is easy to see that
\begin{align*}
J(f_1,f_2,f_3)\ges&
\int_{\R^4}f_1(\xi_1,\mu_1)f_2(\xi_2,\mu_2)d\xi_1d\xi_2d\mu_1d\mu_2\\
\ges&N_1^{-3}L_1L_2^2\ges 2^{j_{1}/2}2^{j_{2}/4}2^{-3 k_{max}/4}
\prod_{i=1}^3\norm{f_{k_i,j_i}}_{L^2}
\end{align*}
which is as desired.

Next, we prove some dyadic bilinear estimates. It certainly work for
general $\dr \in D_{hi}(\alpha)$ for $\alpha\geq 3$. But for our
purpose we restrict ourselves to the case $\dr=\mu\xi^3-\xi^5$. We
will need the estimates for the linear solution to equation
\eqref{scaledkawa}.
\begin{lemma}\label{lemmakawafree}
Let $I\subset \R$ be an interval with $|I|\les 1$, $k\in \Z_+$ and
$k\geq 10$. Then for all $\phi \in \Sch(\R)$ we have
\begin{align}
\norm{W(t)P_k\phi}_{L_t^qL_x^r}\les& 2^{-3k/q}\norm{\phi}_{L^2},\\
\norm{W(t)P_{\leq k}(\phi)}_{L_x^2L_{t\in I}^\infty}\les& 2^{5k/4}
\norm{\phi}_{L^2},\\
\norm{W(t)P_k\phi}_{L_x^4L_{t}^\infty}\les& 2^{k/4}
\norm{\phi}_{L^{2}},\\
\norm{W(t)P_k\phi}_{L_x^\infty L_{t}^2}\les& 2^{-2k}
\norm{\phi}_{L^{2}},
\end{align}
where $(q,r)$ satisfies $2\leq q,r \leq \infty$ and 2/q=1/2-1/r.
\end{lemma}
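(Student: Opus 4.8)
My plan is to treat the four bounds as four classical estimates for the free propagator $W(t)$ with symbol $e^{it\dr(\xi)}$, $\dr(\xi)=\mu\xi^3-\xi^5$, the only analytic input being the behaviour of $\dr$ on the dyadic shell $|\xi|\sim 2^k$ with $k\geq 10$ and $0<\mu\leq 1$. There the quintic term dominates, so that $\dr''(\xi)=6\mu\xi-20\xi^3$ keeps a fixed sign with $|\dr''(\xi)|\sim|\xi|^3\sim 2^{3k}$, while $|\dr'(\xi)|=|3\mu\xi^2-5\xi^4|\sim|\xi|^4\sim 2^{4k}$ is strictly monotone on each sign-component of the support, and $|\dr^{(j)}(\xi)|\les 2^{(5-j)k}$ for $j\geq 3$ (this is exactly $\dr\in D_{hi}(5)$). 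The first bound is a Strichartz inequality, the last is a Kato-type smoothing inequality, and the two middle ones are Kenig--Ponce--Vega maximal function estimates; I treat them in that order.

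For the Strichartz estimate I would first establish the pointwise dispersive bound for the kernel $K_t(x)=\int e^{i(x\xi+t\dr(\xi))}\eta_k(\xi)\,d\xi$. Since the phase has second derivative $t\dr''(\xi)$, with $|\dr''|\sim 2^{3k}$ of fixed sign on $\supp\eta_k$, van der Corput's lemma (the amplitude $\eta_k$ having bounded variation) gives $\norm{K_t}_{L^\infty_x}\les(|t|\,2^{3k})^{-1/2}$, whence $\norm{W(t)P_k\phi}_{L^\infty_x}\les|t|^{-1/2}2^{-3k/2}\norm{\phi}_{L^1_x}$. Feeding this $L^1_x\to L^\infty_x$ bound together with the unitary identity $\norm{W(t)P_k\phi}_{L^2_x}=\norm{P_k\phi}_{L^2_x}$ into a $TT^*$ argument, the resulting time convolution with $|t|^{-1/2}$ is absorbed by the one-dimensional Hardy--Littlewood--Sobolev inequality; this produces the endpoint $\norm{W(t)P_k\phi}_{L^4_tL^\infty_x}\les 2^{-3k/4}\norm{\phi}_{L^2}$, and interpolation with the trivial pair $(q,r)=(\infty,2)$ gives the whole admissible line $2/q=1/2-1/r$ with factor $2^{-3k/q}$.

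The smoothing estimate is the most transparent. For fixed $x$, Plancherel in $t$ after the change of variables $\tau=\dr(\xi)$ (licit since $\dr$ is strictly monotone on each of the two components $\pm\xi\sim 2^k$ of the support) gives
\[
\norm{W(t)P_k\phi(x)}_{L^2_t}^2\sim\int\frac{|\eta_k(\xi)\wh{\phi}(\xi)|^2}{|\dr'(\xi)|}\,d\xi\les 2^{-4k}\norm{\phi}_{L^2}^2 ,
\]
uniformly in $x$ because $|e^{ix\xi}|=1$; taking the supremum over $x$ yields $\norm{W(t)P_k\phi}_{L^\infty_xL^2_t}\les 2^{-2k}\norm{\phi}_{L^2}$.

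The two maximal function estimates are where the real work lies, and they are the main obstacle. A crude energy (Agmon) bound $\norm{f}_{L^\infty_t}^2\les\norm{f}_{L^2_t}\norm{\partial_t f}_{L^2_t}+\norm{f}_{L^2_t}^2$ loses a full half-derivative, giving $2^{5k/2}$ in place of the required $2^{5k/4}$ for a single dyadic block; and the global-in-time third estimate cannot be obtained by energy at all, since the computation above shows $\norm{W(t)P_k\phi(x)}_{L^2_t}$ is independent of $x$ and hence not integrable in $x$. The genuine gain must come from the oscillation, exactly as in Kenig--Ponce--Vega. I would linearize the supremum by a measurable time function $t(x)$ and run a $TT^*$ argument: the resulting kernel $\int e^{i[(x-y)\xi+(t(x)-t(y))\dr(\xi)]}\eta_k(\xi)^2\,d\xi$ is again estimated by van der Corput through $|\dr''|\sim 2^{3k}$, and the non-convolution operator it defines is controlled by Schur's test for the $L^2_x$ statement and by one-dimensional fractional integration for the $L^4_x$ statement. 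For the $L^2_xL^\infty_{t\in I}$ bound one first reduces $P_{\leq k}$ to single dyadic blocks, disposing of frequencies $\les 2^{10}$ by Bernstein and $L^2$ conservation on the bounded interval $I$, and summing the geometric series $\sum_{k'\leq k}2^{5k'/4}\sim 2^{5k/4}$ over the remaining blocks. As these are precisely the maximal estimates for $\dr\in D_{hi}(\alpha)$ established in \cite{GuodgBO,GuoKdV} specialized to $\alpha=5$, one may alternatively quote them directly.
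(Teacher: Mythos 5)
Your proposal is correct and follows essentially the same route as the paper: the paper proves this lemma simply by citing the classical results you sketch (the localized dispersive/Strichartz estimate as in \cite{GPW,CT}, the Kenig--Ponce--Vega maximal function estimates from \cite{KPVJAMS91} and \cite{KPVIUMJ91}, and the Kato-type smoothing computation, noting only that $|\dr'(\xi)|\sim 2^{4k}$ on the support of $\eta_k$). Your write-up fills in the standard arguments behind those citations, and your observation that a crude energy bound cannot reach $2^{5k/4}$ correctly identifies where the genuine oscillatory-integral content lies.
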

\begin{proof}
For the first inequality, see \cite{GPW} and also \cite{CT}, for the
second see \cite{KPVJAMS91}. For the third we use the results in
\cite{KPVIUMJ91}, for the last we use the results in
\cite{KPVJAMS91} by noting that $|\dr'(\xi)|\sim 2^{4k}$ if
$|\xi|\sim 2^{k}$.
\end{proof}

Using the extension lemma in \cite{GuoKdV}, then we get immediately
that

\begin{lemma}\label{Xkembedding}
Let $I\subset \R$ be an interval with $|I|\les 1$, $k\in \Z_+$ and
$k\geq 10$. Then for all $u \in \Sch(\R^2)$ we have
\begin{align}
\norm{P_ku}_{L_t^qL_x^r}\les& 2^{-3k/q}\norm{\wh{P_ku}}_{X_k},\\
\norm{P_{\leq k}u}_{L_x^2L_{t\in I}^\infty}\les& 2^{5k/4}
\norm{\wh{P_{\leq k}u}}_{X_k},\\
\norm{P_ku}_{L_x^4L_{t}^\infty}\les& 2^{k/4}
\norm{\wh{P_ku}}_{X_k},\\
\norm{P_ku}_{L_x^\infty L_{t}^2}\les& 2^{-2k}
\norm{\wh{P_ku}}_{X_k},
\end{align}
where $(q,r)$ satisfies $2\leq q,r \leq \infty$ and 2/q=1/2-1/r.
\end{lemma}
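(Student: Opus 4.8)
The plan is to invoke the transference (``extension'') principle of \cite{GuoKdV}. Each of the four inequalities of Lemma \ref{lemmakawafree} has the schematic form $\norm{W(t)P_k\phi}_{Y}\les A_k\norm{\phi}_{L^2}$, where $Y$ runs over the four space-time norms $L_t^qL_x^r$, $L_x^2L_{t\in I}^\infty$, $L_x^4L_t^\infty$, $L_x^\infty L_t^2$ and $A_k$ is the corresponding dyadic constant $2^{-3k/q}$, $2^{5k/4}$, $2^{k/4}$, $2^{-2k}$. The claim of Lemma \ref{Xkembedding} is the companion bound $\norm{P_ku}_Y\les A_k\norm{\wh{P_ku}}_{X_k}$ for a general space-time function, and I would deduce it from the free bound by writing $P_ku$ as a continuous superposition of modulated free evolutions. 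Since the argument is identical in all four cases, I would treat them at once, setting $f=\wh{P_ku}$ (for the second line $f=\wh{P_{\leq k}u}$, which is supported in $|\xi|\les2^k$, and one applies the corresponding $P_{\leq k}$ free estimate).

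The key steps are as follows. First, decompose $f=\sum_{j\geq0}f_j$ with $f_j=\eta_j(\tau-\dr(\xi))f$; by the triangle inequality and the definition of $\norm{\cdot}_{X_k}$ it then suffices to establish $\norm{\ft^{-1}f_j}_Y\les A_k\,2^{j/2}\norm{f_j}_{L^2}$ for each $j$, since $\sum_{j\geq0}2^{j/2}\norm{f_j}_{L^2}=\norm{f}_{X_k}$. Second, for fixed $j$ I would change variables $\tau=\dr(\xi)+\sigma$ to obtain the representation
\[
\ft^{-1}f_j(x,t)=c\int_{\R}e^{it\sigma}\,\big(W(t)g_{j,\sigma}\big)(x)\,d\sigma,\qquad \wh{g_{j,\sigma}}(\xi)=f_j(\xi,\dr(\xi)+\sigma),
\]
where, because $f_j$ is supported in $|\tau-\dr(\xi)|\sim2^j$, the integrand vanishes unless $|\sigma|\les2^j$. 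Third, Minkowski's integral inequality, the modulation invariance $\norm{e^{it\sigma}G}_Y=\norm{G}_Y$ (valid for each $Y$ since the unimodular factor does not alter the pointwise modulus), and the free estimate of Lemma \ref{lemmakawafree} yield
\[
\norm{\ft^{-1}f_j}_Y\les\int_{|\sigma|\les2^j}\norm{W(t)g_{j,\sigma}}_Y\,d\sigma\les A_k\int_{|\sigma|\les2^j}\norm{g_{j,\sigma}}_{L^2}\,d\sigma.
\]
Finally, Cauchy--Schwarz over the interval $|\sigma|\les2^j$ together with the identity $\int_{\R}\norm{g_{j,\sigma}}_{L^2}^2\,d\sigma=\norm{f_j}_{L^2}^2$ (Plancherel in $(\xi,\sigma)$) gives $\int_{|\sigma|\les2^j}\norm{g_{j,\sigma}}_{L^2}\,d\sigma\les2^{j/2}\norm{f_j}_{L^2}$, which is precisely the per-$j$ bound required.

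I do not expect a deep obstacle: the analytic content is carried entirely by Lemma \ref{lemmakawafree}, and the extension lemma only mechanizes the transfer. The points deserving care are the bookkeeping in the superposition step---in particular the support restriction $|\sigma|\les2^j$, which produces the gain $2^{j/2}$ matching the weight in the definition of $X_k$---and the modulation invariance for the interval-localized norm $L_x^2L_{t\in I}^\infty$. For the latter, the free estimate is already stated with the time variable restricted to $I$, and since $|e^{it\sigma}|=1$ pointwise the restriction and the modulation commute harmlessly with the norm, so Minkowski applies verbatim. The hypotheses $k\geq10$ and $|I|\les1$ are inherited directly from Lemma \ref{lemmakawafree}.
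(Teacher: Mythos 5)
Your argument is correct and is essentially the same as the paper's: the paper simply invokes the extension lemma of \cite{GuoKdV} to transfer the free estimates of Lemma \ref{lemmakawafree}, and your proposal writes out the standard proof of that extension lemma (dyadic modulation decomposition, superposition of modulated free evolutions, Minkowski plus Cauchy--Schwarz yielding the $2^{j/2}$ weight). No gaps.
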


\begin{proposition}[high-low]\label{prophighlow}
(a) If $k\geq 10$, $|k-k_2|\leq 5$, then for any $u,v\in \bar{F}^0$
\begin{align}\label{eq:highlowa}
\norm{(i+\tau-\dr(\xi))^{-1}\eta_k(\xi)i\xi\widehat{P_{\leq
0}u}*\widehat{\psi(t)P_{k_2}v}}_{X_k}\les \norm{{P_{\leq 0}u}}_{
L_x^2L_t^\infty}\norm{\widehat{P_{k_2}v}}_{X_{k_2}}.
\end{align}

(b) If $k\geq 10$, $|k-k_2|\leq 5$ and $1\leq k_1\leq k-9$. Then for
any $u, v\in \bar{F}^0$
\begin{align}\label{eq:highlowb}
\norm{(i+\tau-\dr(\xi))^{-1}\eta_k(\xi)i\xi\widehat{P_{k_1}u}*\widehat{P_{k_2}v}}_{X_k}\les\
k^32^{-7k/2}2^{-k_1}
\norm{\widehat{P_{k_1}u}}_{X_{k_1}}\norm{\widehat{P_{k_2}v}}_{X_{k_2}}.
\end{align}
\end{proposition}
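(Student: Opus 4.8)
The plan is to prove both parts by decomposing the inputs into modulation pieces and applying the symmetric estimate from Lemma \ref{lemsymes}, together with the multiplier bound $\norm{(i+\tau-\dr(\xi))^{-1}g}_{X_k}\les\norm{g}_{X_k}$ and the $L^2$-duality reformulation of the $X_k$ norm. Let me set up notation. Write $f_1=\eta_{k_1}(\xi)\wh{P_{k_1}u}$ (resp.\ the low-frequency piece in part (a)), $f_2=\eta_{k_2}(\xi)\wh{P_{k_2}v}$, and decompose each into $f_i=\sum_{j_i\ge0}f_{k_i,j_i}$ where $f_{k_i,j_i}=\eta_{j_i}(\tau-\dr(\xi))f_i$. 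The key observation is that the $X_k$ norm of the convolution output, after stripping the harmless factor $(i+\tau-\dr(\xi))^{-1}$ which gains a factor $2^{-j_3}$ on the output modulation $j_3$, can be tested against a third function $f_{k,j_3}=f_{k_3,j_3}$ supported in $I_k\times\wt I_{j_3}$ with unit $L^2$ norm; this pairing is exactly $J(f_{k_1,j_1},f_{k_2,j_2},f_{k_3,j_3})$ up to the $i\xi\sim 2^{k}$ factor coming from the derivative. So the task reduces to summing, over $j_1,j_2,j_3\ge0$, the quantity $2^{k}2^{-j_3/2}2^{j_3/2}J(\cdots)$ — more precisely $\sum_{j_3}2^{j_3/2}\cdot 2^{-j_3}\cdot 2^{k}J$ — against $\prod\norm{f_{k_i,j_i}}_{L^2}$.

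First I would handle part (b), the genuine high$\times$low$\to$high case. Here $N_{min}=2^{k_1}\ll N_{med}\sim N_{max}\sim 2^k$, so Proposition \ref{propesreso} gives $|\Omega|\sim 2^{4k}2^{k_1}$, which by \eqref{didentity} forces $2^{j_{max}}\gtrsim 2^{4k+k_1}$ whenever $2^{j_{max}}\gg 2^{j_{med}}$. I would apply part (b) of Lemma \ref{lemsymes} (valid precisely because we are in the regime $N_{min}\ll N_{med}\sim N_{max}$), which supplies the gain $2^{-(\alpha-2)k_{max}/2}=2^{-3k/2}$ for $\alpha=5$. Combining the $2^k$ from the derivative, the $2^{-3k/2}$ from the symmetric bound, and the extra $2^{-(j_i+k_i)/2}$ factor (taking $i$ to be the minimal-frequency index so as to extract $2^{-k_1/2}$), and then carefully summing the geometric series in the modulation variables $j_1,j_2,j_3$, I expect the modulation sums to converge after using the resonance constraint to lower-bound $j_{max}$; each unbounded sum that could diverge logarithmically is controlled by the $2^{-j_3}$ coming from the resolvent and by the $2^{(j_1+j_2+j_3)/2}$ structure in \eqref{eq:lemsymesb}. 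The powers of $k$ in the stated bound $k^3 2^{-7k/2}2^{-k_1}$ are exactly the price paid for these borderline modulation summations, where one loses a factor $\sim k$ per summed index in the worst cases. I would track these losses explicitly to land on the claimed $k^3$.

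For part (a), where the first input is at frequency $\le 2^0$, the output frequency $2^k$ and the high input frequency $2^{k_2}\sim 2^k$ are comparable, so I cannot use the full strength of Lemma \ref{lemsymes}(b); instead I would replace the abstract $X_0$ estimate on the low piece by the $L^2_xL^\infty_t$ norm directly, exploiting the embedding \eqref{eq:Xbar0Lqr} and the Strichartz-type bounds of Lemma \ref{Xkembedding}. Concretely, I would estimate the trilinear form by placing $P_{\le0}u$ in $L^2_xL^\infty_t$, and the two high-frequency factors in dual Strichartz spaces — e.g.\ one in $L^\infty_xL^2_t$ (gaining $2^{-2k}$ from Lemma \ref{Xkembedding}) and the other in $L^2_xL^2_t=L^2$ — via Hölder in the spatial variable, absorbing the derivative $2^k$. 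The lack of any modulation gain here is compensated by the strong fixed-time low-frequency norm $\norm{P_{\le0}u}_{L^2_xL^\infty_t}$, which is why part (a) has no decaying factor in $k$ on its right-hand side. The main obstacle in the whole proposition is the modulation bookkeeping in part (b): getting the three logarithmic summations to close with only a polynomial-in-$k$ loss requires using the resonance identity $2^{j_{max}}\sim\max(2^{j_{med}},|\Omega|)$ together with the resolvent decay in just the right order, and this is the step where a naive estimate would produce divergence rather than the sharp $k^3 2^{-7k/2}2^{-k_1}$.
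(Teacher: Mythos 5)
Your plan is essentially the paper's proof: part (a) is handled by reducing the $X_k$ norm to an $L^2$ bound (summing $2^{-j/2}$ against the resolvent weight) and applying H\"older with $P_{\leq 0}u\in L_x^2L_t^\infty$ against the $2^{-2k}$ gain of the $L_x^\infty L_t^2$ bound in Lemma \ref{Xkembedding}, while part (b) proceeds exactly by modulation decomposition, the resonance lower bound $j_{max}\geq 4k+k_1-O(1)$, the truncation $j_i\les k$ (with the trivial estimate \eqref{eq:lemsymesa} disposing of larger modulations, which is where the polynomial $k^3$ loss originates), and the symmetric estimate \eqref{eq:lemsymesb} with $\alpha=5$. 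The one detail to watch is that the factor $2^{-(j_i+k_i)/2}$ in \eqref{eq:lemsymesb} must be deployed so as to retain decay in $j_{max}$ — taking $i$ to be only the minimal-frequency index, as you propose, leaves nothing decaying in the largest modulation and so wastes the resonance constraint entirely; the paper's computation keeps both $2^{-j_{max}/2}$ and $2^{-k_1/2}$, writing the gain as $2^{j_{min}/2}2^{j_{med}/2}2^{-3k/2}2^{-k_1/2}$ before summing.
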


\begin{proof}

For simplicity of notations we assume $k=k_2$. For part (a), it
follows from the definition of $X_k$ that
\begin{align}
\norm{(i+\tau-\dr(\xi))^{-1}\eta_k(\xi)i\xi\widehat{P_0u}*\widehat{\psi(t)P_kv}}_{X_k}\les
2^k\sum_{j\geq
0}2^{-j/2}\norm{\widehat{P_0u}*\widehat{\psi(t)P_{k_2}v}}_{L_{\xi,\tau}^2}.
\end{align}
From Plancherel's equality and Proposition \ref{Xkembedding} we get
\[2^k\norm{\widehat{P_0u}*\widehat{\psi(t)P_{k_2}v}}_{L_{\xi,\tau}^2}\les
2^{k}\norm{{P_0u}}_{L_x^2L_t^\infty }\norm{{P_ku}}_{L_x^\infty
L_t^2}\les 2^{-k}\norm{{P_0u}}_{L_x^2L_t^\infty
}\norm{\widehat{P_{k}v}}_{X_{k}},\]which is part (a) as desired. For
part (b), from the definition we get
\begin{align}\label{eq:highlowb1}
\norm{(i+\tau-\dr(\xi))^{-1}\eta_k(\xi)i\xi\widehat{P_{k_1}u}*\widehat{P_kv}}_{X_k}\les2^k\sum_{j_i\geq
0}2^{-j_3/2}\norm{1_{D_{k,j_3}}\cdot u_{k_1,j_1}*v_{k,j_2}}_2,
\end{align}
where
\begin{eqnarray}\label{eq:decomuv}
u_{k_1,j_1}=\eta_{k_1}(\xi)\eta_{j_1}(\tau-\dr(\xi))\widehat{u},\
v_{k,j_2}=\eta_k(\xi)\eta_{j_2}(\tau-\dr(\xi))\widehat{v}.
\end{eqnarray}
From Proposition \ref{propesreso} and \eqref{didentity} we may
assume $j_{max}\geq 4k+k_1-10$ in the summation on the right-hand
side of \eqref{eq:highlowb1}. We may also assume $j_1,j_2,j_3\leq
10k$, since otherwise we will apply the trivial estimates
\[\norm{1_{D_{k_3,j_3}}\cdot u_{k_1,j_1}*v_{k,j_2}}_2\les 2^{j_{min}/2}2^{k_{min}/2}\norm{u_{k_1,j_1}}_2\norm{u_{k_2,j_2}}_2,\]
then there is a $2^{-5k}$ to spare which suffices to give the bound
\eqref{eq:highlowb}. Thus by applying \eqref{eq:lemsymesb} we get
\begin{align}
&2^k\sum_{j_1,j_2,j_3\geq 0}2^{-j_3/2}\norm{1_{D_{k,j_3}}u_{k_1,j_1}*v_{k,j_2}}_2\nonumber\\
&\les \ 2^k\sum_{j_1,j_2,j_3\geq 0}2^{-j/2}2^{j_{min}/2}2^{-3k/2}2^{-k_1/2}2^{j_{med}/2}\norm{u_{k_1,j_1}}_2\norm{v_{k,j_2}}_2\nonumber\\
&\les \ 2^k\sum_{j_{max}\geq
2k+k_1-10}k^32^{-3k/2}2^{-k_1/2}2^{-j_{max}/2}\norm{\widehat{P_{k_1}u}}_{X_{k_1}}\norm{\widehat{P_kv}}_{X_k}\nonumber\\
&\les \
k^32^{-7k/2}2^{-k_1}\norm{\widehat{P_{k_1}u}}_{X_{k_1}}\norm{\widehat{P_kv}}_{X_k},
\end{align}
which completes the proof of the proposition.
\end{proof}

\begin{proposition}\label{prophhh}
If $k\geq 10$, $|k-k_2|\leq 5$ and $k-9\leq k_1\leq k+10$, then for
any $u,\ v \in F^{-7/4}$
\begin{eqnarray}
\norm{(i+\tau-\dr(\xi))^{-1}\eta_{k_1}(\xi)i\xi\widehat{P_{k}u}*\widehat{P_{k_2}v}}_{X_{k_1}}\les
\  2^{-9k/4}
\norm{\widehat{P_{k}u}}_{X_{k}}\norm{\widehat{P_{k_2}v}}_{X_{k_2}}.
\end{eqnarray}
\end{proposition}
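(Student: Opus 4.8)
The plan is to reduce the claimed $X_{k_1}$ estimate, via the definition of the $X_k$ norm and duality, to bounding the trilinear expression $J(f_{k},f_{k_2},f_{k_1})$ and then to invoke the high$\times$high$\to$high part of Lemma~\ref{lemsymes}, namely \eqref{eq:lemsymesc}. Since $k\geq 10$ and all three frequencies are comparable, $N_{min}\sim N_{med}\sim N_{max}\gg 1$, so part (c) applies directly with $\alpha=5$. First I would write, as in the proof of Proposition~\ref{prophighlow},
\begin{align*}
\norm{(i+\tau-\dr(\xi))^{-1}\eta_{k_1}(\xi)i\xi\widehat{P_ku}*\widehat{P_{k_2}v}}_{X_{k_1}}
\les 2^k\sum_{j_3\geq 0}2^{-j_3/2}\norm{1_{D_{k_1,j_3}}\cdot u_{k,j_1}*v_{k_2,j_2}}_{L^2},
\end{align*}
where $u_{k,j_1}$ and $v_{k_2,j_2}$ are the dyadic pieces defined as in \eqref{eq:decomuv}. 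Pairing against an $L^2$ function supported in $D_{k_1,j_3}$ converts the $L^2$ norm of the convolution into $J(u_{k,j_1},v_{k_2,j_2},g_{k_1,j_3})$.

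Next I would apply \eqref{eq:lemsymesc}, which gives the bound $2^{j_{min}/2}2^{j_{med}/4}2^{-(\alpha-2)k_{max}/4}=2^{j_{min}/2}2^{j_{med}/4}2^{-3k/4}$ on each trilinear piece (using $k_{max}\sim k$). Summing against the weights $2^{-j_3/2}$ from the $X_{k_1}$ norm and the $2^{j_i/2}$ weights hidden in the $X_k$ norms of $u$ and $v$, the key point is that the net exponent in the $j$-variables must be summable. After pulling out one factor $2^{j_1/2}2^{j_2/2}$ to reconstruct $\norm{\widehat{P_ku}}_{X_k}\norm{\widehat{P_{k_2}v}}_{X_{k_2}}$, the residual $j$-dependence should decay, so the sums over $j_1,j_2,j_3$ converge and leave the prefactor $2^k\cdot 2^{-3k/4}=2^{k/4}$. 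Combined with the two dyadic derivative losses of $2^{-7k/4}$ coming from the $H^{-7/4}$-weights implicit in passing between $X_{k_1}$ and the bare $L^2$ pieces — more precisely, the mismatch between $2^k$ (from $i\xi$) and the $2^{-3k/4}$ gain — I expect the final count to collapse to $2^{-9k/4}$.

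The main obstacle, and the place I would spend the most care, is the bookkeeping of the $j$-summation. Because all three frequencies are comparable, the resonance function $\Omega$ satisfies $|\Omega|\sim N_{max}^4 N_{min}\sim 2^{5k}$ by Proposition~\ref{propesreso}, so the constraint \eqref{didentity} forces $2^{j_{max}}\gtrsim 2^{5k}$ whenever the two largest modulations do not already dominate. One must check that the Knapp-type sharpness exhibited in the counterexample above does not cause a logarithmic divergence here; the $2^{j_{med}/4}$ factor (rather than $2^{j_{med}/2}$) in \eqref{eq:lemsymesc} is precisely what provides a genuine power gain in $j_{med}$ after the $2^{-j_3/2}$ weight is spent, so the triple sum should converge without even a logarithmic loss. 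I would verify this by splitting into the cases according to which of $j_1,j_2,j_3$ is largest, using the identity $2^{j_{max}}\sim\max(2^{j_{med}},2^{5k})$ in each case, and confirming that the exponent on $2^{-3k/4}$ is never degraded.
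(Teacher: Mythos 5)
Your overall strategy coincides with the paper's: reduce to the trilinear form $J$, invoke part (c) of Lemma \ref{lemsymes} with $\alpha=5$ so that $2^{k}\cdot 2^{-3k_{max}/4}\sim 2^{k/4}$, and then split according to which of $j_1,j_2,j_3$ equals $j_{max}$, using the resonance constraint from Proposition \ref{propesreso} and \eqref{didentity}. The paper does exactly this (with the additional technical reduction $j_i\leq 10k$, handling larger modulations by the trivial bound \eqref{eq:lemsymesa}).

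However, your accounting of the final exponent contains a genuine error that you should repair. You assert that the $j$-sums ``converge and leave the prefactor $2^k\cdot 2^{-3k/4}=2^{k/4}$'' and then try to recover the remaining factor from ``the two dyadic derivative losses of $2^{-7k/4}$ coming from the $H^{-7/4}$-weights.'' No $H^{-7/4}$ weights appear anywhere in this proposition: the estimate is stated purely in terms of the unweighted $X_k$ norms, so that factor cannot come from there. The missing $2^{-5k/2}$ comes instead from the modulation constraint itself: since $|\Omega|\sim 2^{5k}$ here, \eqref{didentity} forces $j_{max}\geq 5k-O(1)$, and after you pull out the weights $2^{j_1/2}2^{j_2/2}$ to reconstruct $\norm{\widehat{P_ku}}_{X_k}\norm{\widehat{P_{k_2}v}}_{X_{k_2}}$ the residual summand is of size $2^{-j_{max}/2}2^{-j_{med}/4}$ (up to a harmless subexponential factor when summing the smaller indices), so the geometric sum over $j_{max}\geq 5k-O(1)$ contributes $2^{-5k/2}$, not $O(1)$. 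The correct ledger is $2^{k}\cdot 2^{-3k/4}\cdot 2^{-5k/2}=2^{-9k/4}$. Your closing paragraph does point at the constraint $2^{j_{max}}\gtrsim 2^{5k}$ as the thing to exploit, so this is a fixable bookkeeping slip rather than a wrong method, but as written the middle step would only yield the useless bound $2^{k/4}$, and the appeal to Sobolev weights to close the gap would fail.
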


\begin{proof}
As in the proof of Proposition \ref{prophighlow} we assume
$k=k_2=k_1$ and it follows from the definition of $X_{k_1}$ that
\begin{align}\label{eq:highhigh2}
\norm{(i+\tau-\dr(\xi))^{-1}\eta_{k_1}(\xi)i\xi\widehat{P_{k}u}*\widehat{P_kv}}_{X_{k_1}}\les
\ 2^{k_1}\sum_{j_1,j_2,j_3\geq
0}2^{-j_1/2}\norm{1_{D_{k_1,j_1}}u_{k,j_2}*v_{k,j_3}}_2,
\end{align}
where $u_{k,j_1},v_{k,j_2}$ are as in \eqref{eq:decomuv} and we may
assume $j_{max}\geq 5k-20$ and $j_1,j_2,j_3\leq 10k$ in the
summation. Applying \eqref{eq:lemsymesc} we get
\begin{align*}
&2^{k_1}\sum_{j_1,j_2,j_3\geq
0}2^{-j_1/2}\norm{1_{D_{k_1,j_1}}u_{k,j_2}*v_{k,j_3}}_2\\
&\les \big(\sum_{j_1=j_{max}}+\sum_{j_2=j_{max}}+\sum_{j_3=j_{max}}\big)2^{-j_1/2}2^{k/4}2^{j_{min}/2}2^{j_{med}/4}\norm{u_{k,j_2}}_2\norm{v_{k,j_3}}_2\\
&:=I+II+III.
\end{align*}
For the contribution of $I$, since it is easy to get the bound, thus
we omit the details. We only need to bound $II$ in view of the
symmetry. We get that
\begin{align*}
II\les&(\sum_{j_2=j_{max},j_1\leq j_3}+\sum_{j_2=j_{max},j_1\geq
j_3})2^{-j_1/2}2^{k/4}2^{j_{min}/2}2^{j_{med}/4}\norm{u_{k,j_2}}_2\norm{v_{k,j_3}}_2\\
:=&II_1+II_2.
\end{align*}
For the contribution of $II_1$, by summing on $j_1$ we have
\begin{align*}
II_1\les&\sum_{j_2=j_{max},j_1\leq j_3}2^{-j_1/2}2^{k/4}2^{j_{1}/2}2^{j_{3}/4}\norm{u_{k,j_2}}_2\norm{v_{k,j_3}}_2\\
\les&\sum_{j_2\geq 5k-20,j_3\geq
0}2^{k/4}2^{j_{3}/2}\norm{u_{k,j_2}}_2\norm{v_{k,j_3}}_2\les2^{-9k/4}
\norm{\widehat{P_{k}u}}_{X_{k}}\norm{\widehat{P_{k_2}v}}_{X_{k_2}},
\end{align*}
which is acceptable. For the contribution of $II_2$, we have
\begin{align*}
II_2\les&\sum_{j_2=j_{max},j_1\geq j_3}2^{-j_1/2}2^{k/4}2^{j_{3}/2}2^{j_{1}/4}\norm{u_{k,j_2}}_2\norm{v_{k,j_3}}_2\\
\les&2^{-9k/4}
\norm{\widehat{P_{k}u}}_{X_{k}}\norm{\widehat{P_{k_2}v}}_{X_{k_2}}.
\end{align*}
Therefore, we complete the proof of the proposition.
\end{proof}

For the low-low interaction, it is the same as the KdV case
\cite{GuoKdV}.
\begin{proposition}[low-low]\label{proplowlow}
If $0\leq k_1,k_2,k_3\leq 100$, then for any $u,\ v \in F^s$
\begin{align}\label{eq:lowlow}
\norm{(i+\tau-\dr(\xi))^{-1}\eta_{k_1}(\xi)i\xi\widehat{\psi(t)P_{k_2}(u)}*\widehat{P_{k_3}(v)}}_{X_{k_1}}\les
\norm{{P_{k_2}u}}_{L_t^\infty L_x^2}\norm{{P_{k_3}v}}_{L_t^\infty
L_x^2}.
\end{align}
\end{proposition}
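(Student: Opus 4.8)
The plan is to establish the low–low interaction estimate \eqref{eq:lowlow} by exploiting the fact that when all three frequencies $k_1,k_2,k_3$ are bounded by $100$, there is no decay to chase in the high-frequency variable, so the dispersive gain is irrelevant and only the very weak $\bar X_0$-type structure is needed. First I would unwind the definition of the $X_{k_1}$ norm as in the previous two propositions, writing
\[
\norm{(i+\tau-\dr(\xi))^{-1}\eta_{k_1}(\xi)i\xi\widehat{\psi(t)P_{k_2}(u)}*\widehat{P_{k_3}(v)}}_{X_{k_1}}
\les 2^{k_1}\sum_{j\geq 0}2^{-j/2}\norm{\eta_j(\tau-\dr(\xi))\,\widehat{\psi(t)P_{k_2}u}*\widehat{P_{k_3}v}}_{L^2_{\xi,\tau}}.
\]
Since $k_1\leq 100$ the prefactor $2^{k_1}$ is an absolute constant, so the symbol $i\xi$ costs nothing. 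The key remaining feature is the sum over $j$: because the convolution is supported in the compact region $I_{k_1}\times\R$ in $\xi$, and I have the $2^{-j/2}$ weight, I expect the whole sum to be controlled by a single $L^2$ estimate of the convolution without any loss in $j$, exactly as in the KdV case cited (\cite{GuoKdV}).

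Next I would reduce the $L^2$-norm of the convolution to a product of the two factors via Young's inequality in the form $\norm{F*G}_{L^2_{\xi,\tau}}\les \norm{F}_{L^1}\norm{G}_{L^2}$ or, more symmetrically, through Plancherel followed by Hölder on the physical side, turning the frequency convolution into a product of $L^\infty_tL^2_x$ norms. The point is that for bounded frequencies one can afford to place both factors in $L^\infty_tL^2_x$, which is precisely the norm appearing on the right-hand side of \eqref{eq:lowlow}. The factor $\psi(t)$ attached to $P_{k_2}u$ is what allows the time-cutoff to be absorbed: by the multiplier property \eqref{eq:propertyXk2} together with the localization to $|t|\les 1$ coming from $\supp\psi\subset[-8/5,8/5]$, the $L^\infty_t$ integration is over a bounded interval and introduces no divergence.

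The single genuine subtlety — and the step I expect to be the main obstacle — is the summation over the dyadic modulation index $j$. One has to verify that, even though there is no resonance gain of the type supplied by Proposition \ref{propesreso} in this regime, the $\ell^1$-structure of the $X_k$ norm together with the compact support of the output in $\xi$ prevents a logarithmic divergence. The cleanest way to handle this is to invoke the embedding $\norm{P_{k_i}u}_{L^\infty_tL^2_x}\les \norm{\widehat{P_{k_i}u}}_{X_{k_i}}$ (a routine consequence of the definition of $X_{k_i}$ in the same spirit as Proposition \ref{Xkembedding}), and to control the $j$-sum by fixing the largest modulation, applying the crude symmetric bound \eqref{eq:lemsymesa} (which gives $2^{j_{min}/2}2^{k_{min}/2}$), and summing the remaining two indices against the $2^{-j/2}$ weight. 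Since $k_{min}\les 100$ the factor $2^{k_{min}/2}$ is harmless, and the surviving $\ell^1$ sum over the non-maximal modulations converges.

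Finally I would note that because this configuration is identical to the low-frequency portion of the KdV bilinear estimate, the proof is word-for-word the same as in \cite{GuoKdV}; no property of the symbol $\dr=\mu\xi^3-\xi^5$ beyond its reality and oddness is used, and in particular the $D_{hi}(\alpha)$ dispersive assumption plays no role here. Hence I would present the argument compactly and refer to \cite{GuoKdV} for the details that coincide verbatim, isolating only the reduction displayed above and the $j$-summation as the content that must be checked.
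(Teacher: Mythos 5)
The paper offers no proof of this proposition at all --- it simply remarks that the low--low case ``is the same as the KdV case \cite{GuoKdV}'' --- so the comparison is really with the standard argument you correctly begin to reproduce. Your opening reduction and the Plancherel/H\"older step are right, and they already constitute essentially the whole proof: after writing the left side as $2^{k_1}\sum_{j}2^{j/2}\cdot 2^{-j}\norm{\eta_j(\tau-\dr(\xi))\,\widehat{\psi(t)P_{k_2}u}*\widehat{P_{k_3}v}}_{L^2}$, each summand is bounded by the full $L^2_{\xi,\tau}$ norm of the convolution, the weight $2^{-j/2}$ is summable, and then Plancherel, $|\xi|\les 1$, Bernstein at bounded frequency, and the compact support of $\psi$ give $\norm{\psi(t)P_{k_2}u\cdot P_{k_3}v}_{L^2_{x,t}}\les\norm{P_{k_2}u}_{L_t^\infty L_x^2}\norm{P_{k_3}v}_{L_t^\infty L_x^2}$. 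There is no obstacle in the $j$-sum: the factor $(i+\tau-\dr(\xi))^{-1}$ converts the $\ell^1$-weighted $2^{j/2}$ sum into a convergent $2^{-j/2}$ sum against a uniform $L^2$ bound, with no need to decompose the \emph{inputs} in modulation.

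The genuine flaw is in your proposed handling of that (nonexistent) obstacle. Invoking the embedding $\norm{P_{k_i}u}_{L^\infty_tL^2_x}\les\norm{\widehat{P_{k_i}u}}_{X_{k_i}}$ together with the crude symmetric bound \eqref{eq:lemsymesa} and an $\ell^1$ summation over the input modulations would produce a bound by $\prod_i\norm{\widehat{P_{k_i}u}}_{X_{k_i}}$, and the embedding goes the wrong way to convert this into the claimed right-hand side: $X_{k}$ control is strictly stronger than $L_t^\infty L_x^2$ control, so such a bound neither implies nor is implied by \eqref{eq:lowlow}. Worse, it would be useless in the intended application, since the whole point of the $\bar F^s$ resolution space is that the low-frequency component is controlled only in $\bar X_0=L_x^2L_t^\infty$ (hence in $L_t^\infty L_x^2$ by Minkowski), not in $X_0$; a low--low estimate requiring $X_0$ norms of the inputs could not be inserted into the proof of Proposition \ref{propbilinearbd}. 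Drop the third paragraph entirely and the remainder of your argument is correct and is, in substance, the proof the paper delegates to \cite{GuoKdV}.
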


Now we consider the high-high interactions. This is the only case
where the restriction comes from.

\begin{proposition}[high-high]\label{prophighhigh}
If $k\geq 10$, $|k-k_2|\leq 5$ and $1\leq k_1\leq k-9$, then for any
$u,\ v \in F^0$
\begin{align}\label{eq:highhigh}
\norm{(i+\tau-\dr(\xi))^{-1}\eta_{k_1}(\xi)i\xi\widehat{P_{k}u}*\widehat{P_{k_2}v}}_{X_{k_1}}\les
(2^{-7k/2}+k2^{-4k}2^{k_1/2})
\norm{\widehat{P_{k}u}}_{X_{k}}\norm{\widehat{P_{k_2}v}}_{X_{k_2}}.
\end{align}
\end{proposition}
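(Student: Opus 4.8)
The plan is to mimic the reductions already carried out in Propositions \ref{prophighlow} and \ref{prophhh}. After normalizing $k=k_2$ and unfolding the definition of $X_{k_1}$ (the factor $(i+\tau-\dr(\xi))^{-1}$ contributing $2^{-j_1}$ on the shell $|\tau-\dr(\xi)|\sim 2^{j_1}$, and $|i\xi|\sim 2^{k_1}$), the left-hand side is controlled by $2^{k_1}\sum_{j_1,j_2,j_3\ge 0}2^{-j_1/2}\norm{1_{D_{k_1,j_1}}\,u_{k,j_2}*v_{k,j_3}}_2$, with $u_{k,j_2},v_{k,j_3}$ the modulation pieces from \eqref{eq:decomuv}. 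Since this is a high$\times$high$\to$low interaction, Proposition \ref{propesreso} gives $|\Omega|\sim 2^{4k}2^{k_1}=2^{4k+k_1}$, so \eqref{didentity} forces $j_{max}\ge 4k+k_1-O(1)$; the ranges with some $j_i\ge 10k$ are discarded by the crude bound \eqref{eq:lemsymesa}, which leaves a power $2^{-5k}$ to spare. It then remains to sum over $j_{max}\ge 4k+k_1$, $j_i\le 10k$, which I would split according to which leg carries $j_{max}$.

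The decisive contribution is the resonant one, where the low-frequency output carries the maximal, resonance-driven modulation: $j_1=j_{max}\sim 4k+k_1$ with $\max(j_2,j_3)\le 4k+k_1$. Here none of the three parts of Lemma \ref{lemsymes} is directly applicable — part (b) is excluded because the minimal-frequency leg carries $j_{max}$, and part (c) needs $N_{min}\sim N_{max}$ — so I would bound $\norm{1_{D_{k_1,j_1}}u_{k,j_2}*v_{k,j_3}}_2$ by hand through the measure of the interaction set. The two modulation cutoffs localize the internal time variable to length $\les 2^{\min(j_2,j_3)}$, while the transversality $|\partial_{\xi_1}\Omega(\xi_1,\xi-\xi_1)|=|\dr'(\xi_1)-\dr'(\xi-\xi_1)|\sim 2^{3k+k_1}$ localizes $\xi_1$ to length $\les 2^{\max(j_2,j_3)-3k-k_1}$, which is $\le 2^k$ precisely because $\max(j_2,j_3)\le 4k+k_1$. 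This yields $\norm{1_{D_{k_1,j_1}}u_{k,j_2}*v_{k,j_3}}_2\les 2^{(j_2+j_3)/2}2^{-(3k+k_1)/2}\norm{u_{k,j_2}}_2\norm{v_{k,j_3}}_2$, and summing — with $2^{-j_1/2}\sim 2^{-(4k+k_1)/2}$ and $\sum_{j_2,j_3}2^{(j_2+j_3)/2}\norm{u_{k,j_2}}_2\norm{v_{k,j_3}}_2=\norm{\wh{P_ku}}_{X_k}\norm{\wh{P_kv}}_{X_k}$ — produces exactly the first term $2^{-7k/2}$.

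In the complementary regime a high-frequency leg carries a large modulation, $\max(j_2,j_3)\ge 4k+k_1$. Then the forbidden combination $(k_{min},j_{max})$ does not occur (or, if still $j_1=j_{max}$, one has $j_{max}\sim\max(j_2,j_3)$ by \eqref{didentity}), so I would invoke Lemma \ref{lemsymes}(b), respectively (a). The gain now comes from Cauchy--Schwarz on the high-frequency leg of large modulation: summing $\norm{u_{k,j}}_2$ over $j\ge 4k+k_1$ costs only $2^{-(4k+k_1)/2}$, and together with the $2^{-3k/2}$ and $2^{-k_1/2}$ supplied by \eqref{eq:lemsymesb} this gives the power $2^{-4k}2^{k_1/2}$. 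The surviving modulation variable is then summed over the full range $[0,10k]$ without geometric decay, which is the origin of the logarithmic factor $k$ in the second term $k\,2^{-4k}2^{k_1/2}$.

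The hard part will be the resonant case of the second paragraph, and specifically controlling it exactly rather than with room to spare: as remarked before the statement, this high$\times$high$\to$low interaction is the only source of the regularity restriction, and it is precisely the configuration in which the counterexample preceding Lemma \ref{lemsymes} shows part (c) to be sharp. Because the output leg simultaneously realizes the minimal frequency and the maximal modulation, all three symmetric estimates degenerate and one is forced back to the geometry of $\Omega$; the endpoint $s=-7/4$ hinges on the transversality constant $3k+k_1$ in $\partial_{\xi_1}\Omega$ being large enough that the $\xi_1$-localization beats the length $2^k$ of $I_k$ at the threshold $\max(j_2,j_3)=4k+k_1$. I would verify that threshold computation first, since a sign or exponent slip there would silently destroy the estimate — indeed it is exactly where \cite{CLMW} is reported to have erred.
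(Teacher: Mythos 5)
Your proposal is correct and its skeleton coincides with the paper's: the same reduction to $2^{k_1}\sum_{j_i}2^{-j_1/2}\norm{1_{D_{k_1,j_1}}u_{k,j_2}*v_{k,j_3}}_2$ with $j_{max}\geq 4k+k_1-10$ and $j_i\leq 10k$, the same split according to which leg carries $j_{max}$, and the same two final powers. The one genuine divergence is the resonant case $j_1=j_{max}$: the paper simply invokes \eqref{eq:lemsymesb} with the gain $2^{-(j_i+k_i)/2}$ taken on the output leg, i.e.\ precisely the pair $(k_{min},j_{max})$ that the stated hypothesis of Lemma \ref{lemsymes}(b) excludes; you instead re-derive the identical bound $\norm{1_{D_{k_1,j_1}}u_{k,j_2}*v_{k,j_3}}_2\les 2^{(j_2+j_3)/2}2^{-(3k+k_1)/2}\norm{u_{k,j_2}}_2\norm{v_{k,j_3}}_2$ from the measure of the interaction set via $|\partial_{\xi_1}\Omega|\sim 2^{3k+k_1}$. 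That computation is right (and when $\max(j_2,j_3)\ges 4k+k_1$ the crude bound \eqref{eq:lemsymesa} already suffices, so the case is closed either way), so your version is, if anything, the more defensible one at this step. Two small corrections to the non-resonant case. First, the factors supplied by \eqref{eq:lemsymesb} when the gain is taken on the high-frequency leg carrying $j_{max}$ are $2^{-3k/2}$ and $2^{-k/2}$ (from $2^{-(j_i+k_i)/2}$ with $k_i=k$), not $2^{-k_1/2}$; with your listed $2^{-k_1/2}$ the ingredients multiply out to $k\,2^{-7k/2}$ rather than the claimed $k\,2^{-4k}2^{k_1/2}$, and a $k\,2^{-7k/2}$ bound would reintroduce exactly the logarithmic loss that kills the endpoint $s=-7/4$; with $2^{-k/2}$ the arithmetic gives $k\,2^{-4k}2^{k_1/2}$, matching the paper. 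Second, summing $\norm{u_{k,j}}_2$ over $j\geq 4k+k_1$ costs $2^{-(4k+k_1)/2}$ because $X_k$ carries $\ell^1$-weights $2^{j/2}$, not by Cauchy--Schwarz; this is cosmetic.
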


\begin{proof}
We assume $k=k_2$ and it follows from the definition of $X_{k_1}$
that
\begin{align}\label{eq:highhigh2}
\norm{(i+\tau-\dr(\xi))^{-1}\eta_{k_1}(\xi)i\xi\widehat{P_{k}u}*\widehat{P_kv}}_{X_{k_1}}\les
\ 2^{k_1}\sum_{j_1,j_2,j_3\geq
0}2^{-j_1/2}\norm{1_{D_{k_1,j_1}}u_{k,j_2}*v_{k,j_3}}_2,
\end{align}
where $u_{k,j_2},v_{k,j_3}$ are as in \eqref{eq:decomuv}. For the
same reasons as in the proof of Proposition \ref{prophighlow} we may
assume $j_{max}\geq 4k+k_1-10$ and $j_1,j_2,j_3\leq 10k$. We will
bound the right-hand side of \eqref{eq:highhigh2} case by case. The
first case is that $j_1=j_{max}$ in the summation. Then we apply
\eqref{eq:lemsymesb} and get that
\begin{align*}
&2^{k_1}\sum_{j_1,j_2,j_3\geq
0}2^{-j_1/2}\norm{1_{D_{k_1,j_1}}u_{k,j_2}*v_{k,j_3}}_2\nonumber\\
&\les \ 2^{k_1}\sum_{j_{1}\geq 4k+k_1-10}\sum_{j_2,j_3\geq
0}2^{-j_1/2}2^{-3k/2}2^{{-k_1}/2}2^{(j_{2}+j_3)/2}\norm{u_{k,j_2}}_{2}\norm{v_{k,j_3}}_{2}\\
&\les
2^{-7k/2}\norm{\widehat{P_{k}u}}_{X_{k}}\norm{\widehat{P_{k_2}v}}_{X_{k_2}},
\end{align*}
which is acceptable. If $j_2=j_{max}$, then in this case we have
better estimate for the characterization multiplier. By applying
\eqref{eq:lemsymesb} we get
\begin{align*}
& 2^{k_1}\sum_{j_1,j_2,j_3\geq
0}2^{-j_1/2}\norm{1_{D_{k_1,j_1}}u_{k,j_2}*v_{k,j_3}}_2\nonumber\\
&\les \ 2^{k_1}\sum_{j_{2}\geq 4k+k_1-10}\sum_{j_1\leq 10k,j_3\geq
0}2^{-j_1/2}2^{-2k}2^{(j_{1}+j_3)/2}\norm{u_{k,j_2}}_{2}\norm{v_{k,j_3}}_{2}\\
&\les k
2^{-4k}2^{k_1/2}\norm{\widehat{P_{k}u}}_{X_{k}}\norm{\widehat{P_{k_2}v}}_{X_{k_2}},
\end{align*}
where in the last inequality we use $j_1\leq 10k$. The last case
$j_3=j_{max}$ is identical to the case $j_2=j_{max}$ from symmetry.
Therefore, we complete the proof of the proposition.
\end{proof}

For $k_1=0$ we can prove a similar proposition but with $k2^{-7k/2}$
instead of $2^{-7k/2}$ on the right-hand side of
\eqref{eq:highhigh}. In order to avoid the logarithmic divergence,
we prove the following

\begin{proposition}[$\bar{X}_0$ estimate]\label{propXbares}
Let $|k_1-k_2|\leq 5$ and $k_1\geq 10$. Then we have for all $u,v\in
\bar{F}^0$
\begin{align*}
\normo{\psi(t)\int_0^t W(t-s)P_{\leq
0}\partial_x[P_{k_1}u(s)P_{k_2}v(s)]ds}_{L_x^2 L_t^\infty}\les
2^{-\half{7k_1}}\norm{\wh{P_{k_1}u}}_{X_{k_1}}\norm{\wh{P_{k_2}u}}_{X_{k_2}}.
\end{align*}
\end{proposition}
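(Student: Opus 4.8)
The plan is to follow the pattern of Propositions \ref{prophighlow}--\ref{prophighhigh}; the only genuinely new point is that the output is now measured in the maximal-function norm $\bar X_0=L_x^2L_t^\infty$ instead of an $X_k$ space. Accordingly, the first task is to reduce this maximal norm of the Duhamel term to the same $L^2$-based bilinear sum that governs the $X_k$ estimates. Writing $G=P_{\leq0}\partial_x[P_{k_1}u\,P_{k_2}v]$ and taking the spatial Fourier transform, one has the representation
\[
\ft_x\Big[\psi(t)\int_0^tW(t-s)G(s)\,ds\Big](\xi,t)=\psi(t)\int_{\R}\ft G(\xi,\tau)\,\frac{e^{it\tau}-e^{it\dr(\xi)}}{i(\tau-\dr(\xi))}\,d\tau .
\]
I would split this into the free profile $-\psi(t)W(t)\phi_G$, with $\wh{\phi_G}(\xi)=\int_{\R}\ft G(\xi,\tau)\,(i(\tau-\dr(\xi)))^{-1}\,d\tau$, plus an oscillatory remainder $\psi(t)\ft^{-1}[\ft G/(i(\tau-\dr))]$.

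For the free profile I would use the low-frequency smoothing estimate $\norm{W(t)P_{\leq0}\phi}_{L_x^2L_{t\in I}^\infty}\les\norm{\phi}_{L^2}$ (the low endpoint of the second bound in Lemma \ref{lemmakawafree}), reducing matters to $\norm{\phi_G}_{L^2}$; for the oscillatory remainder I would use Minkowski's inequality together with $\sup_t\big|\int e^{it\tau}H(x,\tau)\,d\tau\big|\le\int|H(x,\tau)|\,d\tau$. Both steps reduce the proposition to the estimate
\[
\Big\|\int_{\R}\frac{|\ft G(\xi,\tau)|}{\jb{\tau-\dr(\xi)}}\,d\tau\Big\|_{L_\xi^2}\les 2^{-7k_1/2}\norm{\wh{P_{k_1}u}}_{X_{k_1}}\norm{\wh{P_{k_2}v}}_{X_{k_2}} .
\]
The apparent singularity of $(\tau-\dr)^{-1}$ on the characteristic surface is only apparent: the combined Duhamel kernel is bounded by $\min(|t|,2|\tau-\dr|^{-1})\les\jb{\tau-\dr}^{-1}$ for $|t|\les1$, so the contribution of the modulation-$\les1$ part of $G$ should be handled with this non-singular kernel directly, which is why the weight above is $\jb{\tau-\dr}^{-1}$ rather than $|\tau-\dr|^{-1}$.

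By Cauchy--Schwarz in $\tau$ the left side is $\les\sum_{j\geq0}2^{-j/2}\norm{\ft G\cdot\eta_j(\tau-\dr)}_{L^2}$, which is exactly the combination produced by Proposition \ref{proplineares}(b), so from here the argument is identical to the proof of Proposition \ref{prophighhigh}. Since $\ft G=i\xi\,\eta_0(\xi)\,\wh{P_{k_1}u}*\wh{P_{k_2}v}$, the derivative contributes the factor $|\xi|\les1$, and we are in the high-high regime $N_{min}=|\xi|\les1\ll N_{med}\sim N_{max}\sim2^{k_1}$. The identity \eqref{didentity} together with Proposition \ref{propesreso} forces $2^{j_{max}}\gtrsim|\Omega|\sim2^{4k_1}|\xi|$, and the gain $2^{-(\alpha-2)k_{max}/2}=2^{-3k_1/2}$ of Lemma \ref{lemsymes}(b), combined with this modulation lower bound and the summation in $j$, delivers the claimed $2^{-7k_1/2}$; any residual factor of $k_1$ arising in the configuration where an input modulation is dominant is absorbed by the surplus dyadic decay.

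I expect the first reduction to be the main obstacle. Because $L_x^2L_t^\infty$ does not commute with $\ft_x$, one cannot simply invoke Plancherel in $x$ as in the $X_k$ estimates; the point is precisely to route the maximal function through the genuine smoothing estimate for $W(t)P_{\leq0}$ while retaining the cancellation of the Duhamel kernel near $\tau=\dr(\xi)$. This is also where working directly with $\bar X_0$, rather than passing through the $l^1$-in-modulation norm $X_0$ as in Proposition \ref{prophighhigh}, keeps the final bound free of the logarithmic factor flagged in the remark preceding the proposition.
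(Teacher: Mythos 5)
There is a genuine gap, and it sits exactly at the point your last paragraph waves away. Your reduction of the $L_x^2L_t^\infty$ norm to
\begin{equation*}
\sum_{j\geq 0}2^{-j/2}\norm{\eta_j(\tau-\dr(\xi))\ft G}_{L^2_{\xi,\tau}}
\end{equation*}
is correct (it is essentially Proposition \ref{proplineares}(b) combined with the embedding \eqref{eq:xbar0x0}), but this quantity \emph{is} the $X_0$ norm of the Duhamel term, and running the argument of Proposition \ref{prophighhigh} on it reproduces precisely the logarithmic loss that the remark preceding Proposition \ref{propXbares} warns about. Concretely, in the resonant configuration where both input modulations are $O(1)$, the identity \eqref{didentity} and Proposition \ref{propesreso} force the output modulation to satisfy $2^{j}\sim 2^{4k_1}|\xi|$; for each output dyadic frequency $|\xi|\sim 2^{k_3}$ with $-4k_1\les k_3\leq 0$ the corresponding block contributes, after \eqref{eq:lemsymesb}, exactly $2^{k_3}\cdot 2^{-j/2}\cdot 2^{j/2}2^{-3k_1/2}2^{-(j+k_3)/2}\sim 2^{-7k_1/2}$ — independent of $k_3$. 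Summing the $\sim 4k_1$ admissible blocks gives $k_1 2^{-7k_1/2}$, not $2^{-7k_1/2}$. There is no ``surplus dyadic decay'' to absorb this: at $s=-7/4$ the high-high$\to$low interaction is exactly at the endpoint ($2^{-7k/2}$ against two factors of $2^{7k/4}$ gives $O(1)$ per dyadic shell), so any extra factor of $k_1$ destroys the bilinear estimate. Your claim that working with $\bar X_0$ removes the logarithm is true of the proposition, but the mechanism you propose does not realize it — the logarithm is not an artifact of the $\ell^1$-in-modulation structure; it reflects a genuine spread of the output over $\sim k_1$ modulation scales, each saturating the $L^2$-based convolution estimate.

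The paper's proof avoids this by abandoning the modulation-space route on the resonant set. It splits the convolution hyperplane into $\Gamma_1,\ldots,\Gamma_4$; the regions where the output frequency is tiny ($|\xi|\les 2^{-4k_1}$) or where an input modulation exceeds $2^{4k_1}|\xi|$ are handled exactly as you propose and give better bounds ($2^{-6k_1}$ and $k_12^{-5k_1}$). But on the main region $\Gamma_2$ it writes the Duhamel term as a superposition over $(\tau_1,\tau_2)$ of products of free solutions, replaces the denominator $\tau_1+\tau_2+\Omega$ by $\Omega=\dr(\xi_1)+\dr(\xi_2)-\dr(\xi)\sim \xi\,\xi_1\xi_2(\xi_1^2+\xi_2^2+\xi^2)$, expands $1/\Omega$ in a geometric series so that the variables $\xi_1,\xi_2$ separate, and then estimates the product of the two free waves by the bilinear maximal bound $\norm{W(t)\partial_x^{-2}f\,W(t)\partial_x^{-2}g}_{L_x^2L_t^\infty}\les \norm{W(t)\partial_x^{-2}f}_{L_x^4L_t^\infty}\norm{W(t)\partial_x^{-2}g}_{L_x^4L_t^\infty}\les 2^{-7k_1/2}\norm{f}_{L^2}\norm{g}_{L^2}$ from Lemma \ref{lemmakawafree}. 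The point is that the factor $\xi$ from $\partial_x$ cancels against the $\xi$ in $\Omega$ pointwise, with no summation over output frequencies or modulations, so no logarithm appears. This use of the $L_x^4L_t^\infty$ estimate on each free-wave factor, together with the series expansions needed to separate variables (and the further expansion controlling $I-I'$), is the essential content of the proof and is absent from your proposal.
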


\begin{proof}
Denote $Q(u,v)=\psi(t)\int_0^t W(t-s)P_{\leq
0}\partial_x[P_{k_1}u(s)P_{k_2}v(s)]ds$. By straightforward
computations we get
\begin{align*}
\ft\left[Q(u,v)\right](\xi,\tau)=&C\int_\R
\frac{\widehat{\psi}(\tau-\tau')-\widehat{\psi}(\tau-\dr(\xi))}{\tau'-\dr(\xi)}\eta_0(\xi)i\xi
\\
&\times\
d\tau'\int_{\xi=\xi_1+\xi_2,\tau'=\tau_1+\tau_2}\wh{P_{k_1}u}(\xi_1,\tau_1)\wh{P_{k_2}v}(\xi_2,\tau_2).
\end{align*}
Fixing $\xi \in \R$, we decompose the hyperplane $\Gamma:=
\{\xi=\xi_1+\xi_2,\tau'=\tau_1+\tau_2\}$ as following
\begin{align*}
\Gamma_1=&\{ |\xi|\les 2^{-4k_1}\} \cap \Gamma;\\
\Gamma_2=&\{ |\xi|\gg 2^{-4k_1},
|\tau_i-\dr(\xi_i)|\ll 3\cdot 2^{4k_1}|\xi|, i=1,2\} \cap \Gamma;\\
\Gamma_3=&\{ |\xi|\gg 2^{-4k_1},
|\tau_1-\dr(\xi_1)|\ges 3\cdot 2^{4k_1}|\xi|\} \cap \Gamma;\\
\Gamma_4=&\{ |\xi|\gg 2^{-4k_1}, |\tau_2-\dr(\xi_2)|\ges 3\cdot
2^{2k_1}|\xi|\} \cap \Gamma.
\end{align*}
Then we get
\[\ft\left[\psi(t)\cdot
\int_0^tW(t-s)P_{\leq
0}\partial_x[P_{k_1}u(s)P_{k_2}v(s)]ds\right](\xi,\tau)=A_1+A_2+A_3+A_4,\]
where
\[
A_i=C\int_\R
\frac{\widehat{\psi}(\tau-\tau')-\widehat{\psi}(\tau-\dr(\xi))}{\tau'-\dr(\xi)}\eta_0(\xi)i\xi
\int_{\Gamma_i}\wh{P_{k_1}u}(\xi_1,\tau_1)\wh{P_{k_2}v}(\xi_2,\tau_2)d\tau'.
\]

We consider first the contribution of the term $A_1$. Using
Proposition \ref{Xkembedding} and Proposition \ref{proplineares}
(b), we get
\[\norm{\ft^{-1}(A_1)}_{ L_x^2L_t^\infty}\les \normo{(i+\tau'-\dr(\xi))^{-1}\eta_0(\xi)i\xi
\int_{\Gamma_1}\wh{P_{k_1}u}(\xi_1,\tau_1)\wh{P_{k_2}v}(\xi_2,\tau_2)}_{X_0}.\]
Since in the area $A_1$ we have $|\xi|\les 2^{-4k_1}$, thus we get
\begin{align*}
&\normo{(i+\tau'-\dr(\xi))^{-1}\eta_0(\xi)i\xi
\int_{A_1}\wh{P_{k_1}u}(\xi_1,\tau_1)\wh{P_{k_2}v}(\xi_2,\tau_2)}_{X_0}\\
&\les \sum_{k_3\leq -4k_1+10}\sum_{j_3\geq 0}
2^{-j_3/2}2^{k_3}\sum_{j_1\geq 0, j_2\geq
0}\norm{1_{D_{k_3,j_3}}\cdot u_{k_1,j_1}*v_{k_2,j_2}}_{L^2}
\end{align*}
where
\[
u_{k_1,j_1}(\xi,\tau)=\eta_{k_1}(\xi)\eta_{j_1}(\tau-\dr(\xi))\wh{u}(\xi,\tau),
v_{k_1,j_1}(\xi,\tau)=\eta_{k_1}(\xi)\eta_{j_1}(\tau-\dr(\xi))\wh{v}(\xi,\tau).
\]
Using \eqref{eq:lemsymesa}, then we get
\begin{align*}
\norm{\ft^{-1}(A_1)}_{ L_x^2L_t^\infty}\les& \sum_{k_3\leq
-4k_1+10}\sum_{j_i\geq 0} 2^{-j_3/2}2^{k_3}2^{j_{min}/2}2^{k_3/2}
\norm{u_{k_1,j_1}}_{L^2}\norm{v_{k_2,j_2}}_{L^2}\\
\les&
2^{-6k_1}\norm{\wh{P_{k_1}u}}_{X_{k_1}}\norm{\wh{P_{k_2}u}}_{X_{k_2}},
\end{align*}
which suffices to give the bound for the term $A_1$.

Next we consider the contribution of the term $A_3$. As for the term
$A_1$, using Proposition \ref{Xkembedding} and Proposition
\ref{proplineares} (b), we get
\begin{align*}
\norm{\ft^{-1}(A_3)}_{ L_x^2L_t^\infty}\les&
\normo{(i+\tau'-\dr(\xi))^{-1}\eta_0(\xi)i\xi \int_{\Gamma_3}\wh{P_{k_1}u}(\xi_1,\tau_1)\wh{P_{k_2}v}(\xi_2,\tau_2)}_{X_0}\\
\les&\sum_{k_3\leq 0}\sum_{j_3\geq 0} 2^{-j_3/2}2^{k_3}\sum_{j_1\geq
0, j_2\geq 0}\norm{1_{D_{k_3,j_3}}\cdot
u_{k_1,j_1}*v_{k_2,j_2}}_{L^2}
\end{align*}
Clearly we may assume $j_3\leq 10k_1$ in the summation above. Using
\eqref{eq:lemsymesb}, then we get
\begin{align*}
\norm{\ft^{-1}(A_3)}_{ L_x^2L_t^\infty}\les& \sum_{k_3\leq
0}\sum_{j_1\geq k_3+4k_1-10, j_2,j_3\geq 0}
2^{k_3}2^{j_{2}/2}2^{-3k_1}
\norm{u_{k_1,j_1}}_{L^2}\norm{v_{k_2,j_2}}_{L^2}\\
\les&
k_12^{-5k_1}\norm{\wh{P_{k_1}u}}_{X_{k_1}}\norm{\wh{P_{k_2}u}}_{X_{k_2}},
\end{align*}
which suffices to give the bound for the term $A_3$. From symmetry,
the bound for the term $A_4$ is the same as $A_3$.

Now we consider the contribution of the term $A_2$. From the proof
of the dyadic bilinear estimates, we know this term is the main
contribution. By computation we get
\begin{align*}
\ft_t^{-1}(A_2)=\psi(t)\int_0^t e^{i(t-s)\dr(\xi)}\eta_0(\xi)i\xi
\int_{\R^2}e^{is(\tau_1+\tau_2)}
\int_{\xi=\xi_1+\xi_2}{u_{k_1}}(\xi_1,\tau_1){v_{k_2}}(\xi_2,\tau_2)\
d\tau_1 d\tau_2ds
\end{align*}
where
\begin{align*}
u_{k_1}(\xi_1,\tau_1)=&\eta_{k_1}(\xi_1)1_{\{|\tau_1-\dr(\xi_1)|\ll
3\cdot 2^{4k_1}|\xi|\}}\wh{u}(\xi_1,\tau_1),\\
v_{k_2}(\xi_2,\tau_2)=&\eta_{k_2}(\xi_2)1_{\{|\tau_2-\dr(\xi_2)|\ll
3\cdot 2^{4k_1}|\xi|\}}\wh{v}(\xi_2,\tau_2).
\end{align*}
By a change of variable $\tau_1'=\tau_1-\dr(\xi_1)$,
$\tau_2'=\tau_2-\dr(\xi_2)$, we get
\begin{align*}
\ft_t^{-1}(A_2)=&\psi(t)e^{it\dr(\xi)}\eta_0(\xi)\xi\int_{\R^2}e^{it(\tau_1+\tau_2)}\int_{\xi=\xi_1+\xi_2}\frac{e^{it(\dr(\xi_1)+\dr(\xi_2)-\dr(\xi))}-e^{-it(\tau_1+\tau_2)}}
{\tau_1+\tau_2-\dr(\xi)+\dr(\xi_1)+\dr(\xi_2)}\\
&\times \
{u_{k_1}}(\xi_1,\tau_1+\dr(\xi_1)){v_{k_2}}(\xi_2,\tau_2+\dr(\xi_2))\
d\tau_1 d\tau_2\\
=&\ft_t^{-1}(I)-\ft_t^{-1}(II).
\end{align*}
For the contribution of the term $II$, we have
\begin{align*}
&\ft_t^{-1}(II)=\int_{\R^2}\psi(t)e^{it\dr(\xi)}\eta_0(\xi)\xi\int_{\xi=\xi_1+\xi_2}
\frac{{u_{k_1}}(\xi_1,\tau_1+\dr(\xi_1)){v_{k_2}}(\xi_2,\tau_2+\dr(\xi_2))}{\tau_1+\tau_2-\dr(\xi)+\dr(\xi_1)+\dr(\xi_2)}\
d\tau_1 d\tau_2.
\end{align*}
Since in the support of $u_{k_1}$ and  $u_{k_2}$ we have
$|\tau_1+\tau_2-\dr(\xi)+\dr(\xi_1)+\dr(\xi_2)|\sim 2^{4k_1}|\xi|$,
then we get from Lemma \ref{lemmakawafree} that
\begin{align*}
\norm{\ft^{-1}(II)}_{L_x^2L_t^\infty}\les&
\int_{\R^2}\normo{\int_{\xi=\xi_1+\xi_2}\xi\frac{{u_{k_1}}(\xi_1,\tau_1+\dr(\xi_1)){v_{k_2}}(\xi_2,\tau_2+\dr(\xi_2))}{\tau_1+\tau_2-\dr(\xi)+\dr(\xi_1)+\dr(\xi_2)}}_{L_\xi^2}d\tau_1d\tau_2\\
\les&2^{-\half{7k_1}}\norm{\wh{P_{k_1}u}}_{X_{k_1}}\norm{\wh{P_{k_2}u}}_{X_{k_2}}.
\end{align*}

To prove the proposition, it remains to prove the following
\begin{eqnarray*}
\norm{\ft^{-1}(I)}_{L_x^2L_t^\infty}\les
2^{-7k_1/2}\norm{\wh{P_{k_1}u}}_{X_{k_1}}\norm{\wh{P_{k_2}u}}_{X_{k_2}}.
\end{eqnarray*}
Compare the term $I$ with the following term $I'$:
\begin{align*}
\ft_t^{-1}(I')
=&\psi(t)e^{it\dr(\xi)}\eta_0(\xi)\xi\int_{\R^2}e^{it(\tau_1+\tau_2)}\int_{\xi=\xi_1+\xi_2}
\frac{e^{it(\dr(\xi_1)+\dr(\xi_2)-\dr(\xi))}}{-\dr(\xi)+\dr(\xi_1)+\dr(\xi_2)}\\
&\times \
{u_{k_1}}(\xi_1,\tau_1+\dr(\xi_1)){v_{k_2}}(\xi_2,\tau_2+\dr(\xi_2))\
d\tau_1 d\tau_2.
\end{align*}
Since on the hyperplane $\xi=\xi_1+\xi_2$ one has
\[-\dr(\xi+\xi)+\dr(\xi_1)+\dr(\xi_2)=\xi_1\xi_2\xi(\xi_1^2+\xi_2^2+\xi^2-\lambda
\alpha)=C\xi_1\xi_2\xi(-2\xi_1\xi_2+2\xi^2-\lambda \alpha).\] In the
integral area, we have $|2\xi^2-\lambda \alpha|\ll |\xi_1\xi_2|$,
thus we get
\[\frac{1}{-2\xi_1\xi_2+2\xi^2-\lambda \alpha}=\frac{1}{-2\xi_1\xi_2}\sum_{n=0}^\infty\brk{\frac{2\xi^2-\lambda\alpha}{2\xi_1\xi_2}}^n\]
Inserting this into $I'$ we have
\begin{align*}
\ft_t^{-1}(I')
=&\psi(t)\eta_0(\xi)\sum_{n=0}^\infty\int_{\R^2}e^{it(\tau_1+\tau_2)}\int_{\xi=\xi_1+\xi_2}
e^{it(\dr(\xi_1)+\dr(\xi_2))}\frac{(2\xi^2-\lambda \alpha)^n}{(\xi_1\xi_2)^{n+2}}\\
&\times \
{u_{k_1}}(\xi_1,\tau_1+\dr(\xi_1)){v_{k_2}}(\xi_2,\tau_2+\dr(\xi_2))\
d\tau_1 d\tau_2.
\end{align*}
Since it is easy to see that (actually we need a smooth version of
$1_{\{|\xi|\gg \lambda\}}$): $\forall \ \lambda>0$,
\[\norm{\ft_x^{-1}1_{\{|\xi|\gg \lambda\}}\ft_x u}_{L_x^2L_t^\infty}\les \norm{u}_{L_x^2L_t^\infty},\]
and setting
\[\ft(f_{\tau_1})(\xi)=\wh{P_{k_1}u}(\xi,\tau_1+\dr(\xi)),\
\ft(g_{\tau_2})(\xi)=\wh{P_{k_2}v}(\xi,\tau_2+\dr(\xi)),\] thus we
get from Lemma \ref{lemmakawafree} that we set
\begin{align*}
\norm{\ft^{-1}(I')}_{L_x^2L_t^\infty}\les&\sum_{n=0}^\infty C^n
\int_{\R^2}\norm{W(t)\partial_x^{-{(n+2)}}f_{\tau_1}W(t)\partial_x^{-{(n+2)}}g_{\tau_2}}_{L_x^2L_t^\infty}d\tau_1d\tau_2\\
\les&\sum_{n=0}^\infty C^n\int_{\R^2}\norm{W(t)\partial_x^{-{(n+2)}}f_{\tau_1}}_{L_x^4L_t^\infty}\norm{W(t)\partial_x^{-{(n+2)}}g_{\tau_2}}_{L_x^4L_t^\infty}d\tau_1d\tau_2\\
\les&2^{-\half{7k_1}}\norm{\wh{P_{k_1}u}}_{X_{k_1}}\norm{\wh{P_{k_2}u}}_{X_{k_2}},
\end{align*}
which gives the bound for the term $II'_1$.

To prove the proposition, it remains to prove the following
\begin{align*}
\norm{\ft^{-1}(I-I')}_{L_x^2L_t^\infty}\les
2^{-7k_1/2}\norm{\wh{P_{k_1}u}}_{X_{k_1}}\norm{\wh{P_{k_2}u}}_{X_{k_2}}.
\end{align*}
Since in the integral area we have $|\tau_i|\ll 2^{4k_1}|\xi|$,
$i=1,2$, thus on the hyperplane $\xi=\xi_1+\xi_2$ we have
\begin{align*}
&\frac{1}{\tau_1+\tau_2-\dr(\xi)+\dr(\xi_1)+\dr(\xi_2)}-\frac{1}{-\dr(\xi)+\dr(\xi_1)+\dr(\xi_2)}\\
=&\sum_{n=1}^\infty
\rev{-\dr(\xi)+\dr(\xi_1)+\dr(\xi_2)}\brk{\frac{\tau_1+\tau_2}{-\dr(\xi)+\dr(\xi_1)+\dr(\xi_2)}}^n\\
=&C\sum_{n=1}^\infty\frac{1}{(\xi_1\xi_2)^2\xi}\sum_{k=0}^\infty
\brk{\frac{2\xi^2-\lambda\alpha}{2\xi_1\xi_2}}^k
\brk{\frac{\tau_1+\tau_2}{(\xi_1\xi_2)^2\xi}}^n \sum_{j_1,\cdots,
j_n=0}^\infty
\prod_{i=1}^n\brk{\frac{2\xi^2-\lambda\alpha}{2\xi_1\xi_2}}^{j_i}.
\end{align*}
The purpose of decomposing this is to make the variable separately,
thus then we can apply Lemma \ref{lemmakawafree}. Then by
decomposing low frequency we get
\begin{align*}
&\ft_t^{-1}(I-I')=\sum_{n=1}^\infty\psi(t)\eta_0(\xi)\int_{\R^2}e^{it(\tau_1+\tau_2)}\sum_{2^{k_3}\gg
2^{-4k_1}\max(|\tau_1|,|\tau_2|)}\chi_{k_3}(\xi)\\
&\times\int_{\xi=\xi_1+\xi_2}e^{it(\dr(\xi_1)+\dr(\xi_2))}
{u_{k_1}}(\xi_1,\tau_1+\dr(\xi_1)){v_{k_2}}(\xi_2,\tau_2+\dr(\xi_2))\frac{1}{(\xi_1\xi_2)^2}\\
&\times \sum_{k=0}^\infty
\brk{\frac{2\xi^2-\lambda\alpha}{2\xi_1\xi_2}}^k
\brk{\frac{\tau_1+\tau_2}{(\xi_1\xi_2)^2\xi}}^n \sum_{j_1,\cdots,
j_n=0}^\infty
\prod_{i=1}^n\brk{\frac{2\xi^2-\lambda\alpha}{2\xi_1\xi_2}}^{j_i}
d\tau_1 d\tau_2.
\end{align*}
Using the fact that $\chi_{k_3}(\xi)(\xi/2^{k_3})^{-n}$ is a
multiplier for the space $L_x^2L_t^\infty$ and as for the term $I'$,
we get
\begin{align*}
&\norm{\ft^{-1}(I-I')}_{L_x^2L_t^\infty}\\
\les &\sum_{n=1}^\infty \int_{\R^2} \sum_{2^{k_3}\gg
2^{-4k_1}\max(|\tau_1|,|\tau_2|)} C^n
|\tau_1+\tau_2|^n2^{-nk_3}2^{-4nk_1}\\
&\times \
2^{-7k_1/2}\norm{\ft(f_{\tau_1})}_{L^2}\norm{\ft(g_{\tau_2})}_{L^2}d\tau_1
d\tau_2\\
\les&
2^{-7k_1/2}\norm{\wh{P_{k_1}u}}_{X_{k_1}}\norm{\wh{P_{k_2}u}}_{X_{k_2}}.
\end{align*}
Therefore, we complete the proof of the proposition.
\end{proof}

For $u,v\in \bar{F}^s$ we define the bilinear operator
\begin{align}
B(u,v)=\psi(\frac{t}{4})\int_0^tW(t-\tau)\partial_x\big(\psi^2(\tau)u(\tau)\cdot
v(\tau)\big)d\tau.
\end{align}
In order to apply a fixed point argument, all the issues are then
reduced to show the boundness of $B:\bar{F}^s\times
\bar{F}^s\rightarrow \bar{F}^s$. Then Theorem \ref{thm:lwp} follows
from standard arguments.

\begin{proposition}[Bilinear estimates]\label{propbilinearbd}
Assume $-7/4\leq s\leq 0$. Then there exists $C>0$ such that
\begin{eqnarray}\label{eq:bilinearbd}
\norm{B(u,v)}_{\bar{F}^s}\leq
C(\norm{u}_{\bar{F}^s}\norm{v}_{\bar{F}^{-7/4}}+\norm{u}_{\bar{F}^{-7/4}}\norm{v}_{\bar{F}^s})
\end{eqnarray}
hold for any $u,v\in \bar{F}^s$.
\end{proposition}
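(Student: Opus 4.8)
The plan is to reduce the bound on $B(u,v)$ to the dyadic bilinear estimates already established (Propositions \ref{prophighlow}, \ref{prophhh}, \ref{proplowlow}, \ref{prophighhigh} and \ref{propXbares}) by a Littlewood--Paley decomposition, and then to sum the dyadic pieces against the Sobolev weights $2^{sk}$ using the $l^2$ structure of $\bar{F}^s$. First I would write $u=\sum_{k_1}P_{k_1}u$, $v=\sum_{k_2}P_{k_2}v$ and decompose the output frequency as $P_{k_3}\partial_x(P_{k_1}u\,P_{k_2}v)$. The definition of $\bar{F}^s$ splits the target norm into a high part $\big(\sum_{k_3\geq1}2^{2sk_3}\norm{\eta_{k_3}\ft(B(u,v))}_{X_{k_3}}^2\big)^{1/2}$ and a low part $\norm{P_{\leq0}B(u,v)}_{\bar{X}_0}$, which I would treat separately. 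The time cutoffs $\psi(t/4)$ and $\psi^2(\tau)$ are harmless: they are absorbed using \eqref{eq:propertyXk2} and the linear estimate Proposition \ref{proplineares}(b), which converts the Duhamel operator acting on a frequency-localized product into the $X_{k_3}$ norm of $(i+\tau-\dr(\xi))^{-1}$ times the corresponding convolution.

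For the high output ($k_3\geq1$), Proposition \ref{proplineares}(b) reduces matters to bounding $\norm{(i+\tau-\dr(\xi))^{-1}\eta_{k_3}(\xi)i\xi\,\wh{P_{k_1}u}*\wh{P_{k_2}v}}_{X_{k_3}}$ and summing. The support condition \eqref{didentity} forces $|k_{med}-k_{max}|\leq5$, leaving exactly three configurations: all three frequencies comparable (Proposition \ref{prophhh}); one input much lower than the other two with output at the high frequency (Proposition \ref{prophighlow}, whose parts (a) and (b) handle respectively a $P_{\leq0}$ input and a dyadic input $P_{k_1}$ with $k_1\geq1$); and two comparable high inputs producing a strictly lower dyadic output (Proposition \ref{prophighhigh}). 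In each configuration I would insert the stated bound, multiply by $2^{sk_3}$, and carry out the $k$-summation. The point is that every estimate carries a genuine negative power of the high frequency ($2^{-9k/4}$, $2^{-7k/2}$, and so on); writing the summand as the two weighted factors $2^{sk}\norm{\wh{P_ku}}_{X_k}$ and $2^{-7k/4}\norm{\wh{P_kv}}_{X_k}$ (and symmetrically) times a factor $2^{(-7/4-s)k}\leq1$, a Cauchy--Schwarz in the dyadic index closes the sum into $\norm{u}_{\bar{F}^s}\norm{v}_{\bar{F}^{-7/4}}+\norm{u}_{\bar{F}^{-7/4}}\norm{v}_{\bar{F}^s}$. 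In the configurations other than the worst one there is a further negative power to spare, so even the low-frequency summation (over $k_1$ in Proposition \ref{prophighlow}(b), or over the output index in Proposition \ref{prophighhigh}) converges comfortably.

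For the low output $P_{\leq0}B(u,v)$ measured in $\bar{X}_0=L_x^2L_t^\infty$, the low--low contribution is controlled directly by Proposition \ref{proplowlow} together with \eqref{eq:xbar0x0} and \eqref{eq:Xbar0Lqr}; this is a sum over $O(1)$ many frequencies and is trivial. The high--high interaction producing very low output is handled by Proposition \ref{propXbares}, which gives the clean bound $2^{-7k_1/2}$ on $\norm{Q(u,v)}_{L_x^2L_t^\infty}$. Summing over the common high frequency, this again reduces to $\sum_k2^{-7k/2}\norm{\wh{P_ku}}_{X_k}\norm{\wh{P_kv}}_{X_k}$, which closes by the same Cauchy--Schwarz argument.

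The main obstacle is precisely this high--high--to--very-low interaction at the endpoint $s=-7/4$. There the exponents balance exactly: both inputs carry the weight $2^{-7k/4}$, and the gain $2^{-7k/2}$ only matches $2^{-7k/4}\cdot2^{-7k/4}$ with no surplus, so the $l^2$ Cauchy--Schwarz in the dyadic index is the sole mechanism producing convergence. Consequently any logarithmic loss is fatal: the naive $X_0$ bound yields $k\,2^{-7k/2}$ (as noted just before Proposition \ref{propXbares}), and $\sum_k k\,a_kb_k$ with $\ell^2$ sequences $a_k,b_k$ does not close. The role of the weaker low-frequency structure $\bar{X}_0$ and of Proposition \ref{propXbares} is exactly to remove this logarithm and restore the borderline bound $2^{-7k/2}$, which is the reason the $L_x^2L_t^\infty$ norm---rather than $X_0$---appears in the definition of $\bar{F}^s$.
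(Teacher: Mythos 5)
Your proposal is correct and is essentially the argument the paper has in mind: the paper itself omits the details, deferring to Proposition 4.2 of \cite{GuoKdV}, and that argument is precisely your assembly of Propositions \ref{prophighlow}--\ref{propXbares} via Littlewood--Paley decomposition, Proposition \ref{proplineares}(b), and Cauchy--Schwarz in the dyadic index. You also correctly isolate the one genuinely delicate point at $s=-7/4$ --- that the high$\times$high$\to$low interaction admits no surplus decay, so the logarithm in the naive $X_0$ bound must be removed by the $\bar{X}_0=L_x^2L_t^\infty$ structure of Proposition \ref{propXbares}.
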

\begin{proof}
The proof is similar to the Proposition 4.2 \cite{GuoKdV}. We omit
the details.
\end{proof}

\section{Modified energies}

In this section we follow I-method \cite{I-method} to extend the
local solution. Let $m: \R^k \rightarrow \C$ be a function. We say
$m$ is symmetric if $m(\xi_1,\cdots, \xi_k)=m(\sigma(\xi_1,\cdots,
\xi_k))$ for all $\sigma \in S_k$, the group of all permutations on
$k$ objects. The symmetrization of $m$ is the function
\begin{equation}
[m]_{sym}(\xi_1,\xi_2,\cdots, \xi_k)=\rev{k!}\sum_{\sigma\in
S_k}m(\sigma(\xi_1,\xi_2,\cdots,\xi_k)).
\end{equation}
We define a $k-linear$ functional associated to the function $m$
(multiplier) acting on $k$ functions $u_1,\cdots,u_k$,
\begin{equation}
\Lambda_k(m;u_1,\cdots,u_k)=\int_{\xi_1+\cdots+\xi_k=0}m(\xi_1,\cdots,\xi_k)\widehat{u_1}(\xi_1)\cdots
\widehat{u_k}(\xi_k).
\end{equation}
We will often apply $\Lambda_k$ to $k$ copies of the same function
$u$. $\Lambda_k(m;u,\ldots,u)$ may simply be written $\Lambda_k(m)$.
By the symmetry of the measure on hyperplane, we have
$\Lambda_k(m)=\Lambda_k([m]_{sym})$. The following proposition may
be directly verified by using the Kawahara equation
\eqref{scaledkawa}.
\begin{proposition}
Suppose $u$ satisfies the Kawahara equation \eqref{scaledkawa} and
that $m$ is a symmetric function. Then
\begin{equation}\label{eq:menergy}
\frac{d}{dt}\Lambda_k(m)=\Lambda_k(mh_k)-\Lambda_k(mv_k)-i\half k
\Lambda_{k+1}(m(\xi_1,\ldots,\xi_{k-1},\xi_k+\xi_{k+1})(\xi_k+\xi_{k+1})),
\end{equation}
where
\[h_k=i\mu(\xi_1^3+\xi_2^3+\cdots+\xi_k^3), \quad v_k=i(\xi_1^5+\xi_2^5+\cdots+\xi_k^5).\]
\end{proposition}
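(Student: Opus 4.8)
The proposition asserts the differentiation formula
$$\frac{d}{dt}\Lambda_k(m)=\Lambda_k(mh_k)-\Lambda_k(mv_k)-i\tfrac{k}{2}\Lambda_{k+1}\big(m(\xi_1,\ldots,\xi_{k-1},\xi_k+\xi_{k+1})(\xi_k+\xi_{k+1})\big)$$
for a solution $u$ of the scaled Kawahara equation, where $h_k$ and $v_k$ are the symbols coming from the linear part. Let me think about how to prove this.

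The plan is to differentiate $\Lambda_k(m)$ directly under the integral sign. Since the hyperplane $\{\xi_1+\cdots+\xi_k=0\}$ and the multiplier $m$ carry no time dependence, the time derivative falls only on the $k$ factors $\widehat u(\xi_i,t)$, and the Leibniz rule gives
\begin{equation*}
\frac{d}{dt}\Lambda_k(m)=\sum_{i=1}^k\int_{\xi_1+\cdots+\xi_k=0}m(\xi_1,\ldots,\xi_k)\,\widehat u(\xi_1)\cdots\partial_t\widehat u(\xi_i)\cdots\widehat u(\xi_k).
\end{equation*}
The only substantive input is then the evolution law for $\partial_t\widehat u$.

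First I would take the spatial Fourier transform of \eqref{scaledkawa}. Using $\widehat{u_{xxx}}(\xi)=-i\xi^3\widehat u(\xi)$, $\widehat{u_{xxxxx}}(\xi)=i\xi^5\widehat u(\xi)$, and $\widehat{uu_x}(\xi)=\frac{i\xi}{2}\widehat{u^2}(\xi)=\frac{i\xi}{2}\int_{\xi=\eta_1+\eta_2}\widehat u(\eta_1)\widehat u(\eta_2)$, one obtains
\begin{equation*}
\partial_t\widehat u(\xi)=i\mu\xi^3\widehat u(\xi)-i\xi^5\widehat u(\xi)-\frac{i\xi}{2}\int_{\xi=\eta_1+\eta_2}\widehat u(\eta_1)\widehat u(\eta_2).
\end{equation*}
Inserting the two linear terms into the Leibniz sum and pulling the summation back through the (identical) product of $\widehat u$'s immediately produces $\int m\big(\sum_{i=1}^k i\mu\xi_i^3\big)\prod_j\widehat u(\xi_j)=\Lambda_k(mh_k)$ and $\int m\big(\sum_{i=1}^k(-i\xi_i^5)\big)\prod_j\widehat u(\xi_j)=-\Lambda_k(mv_k)$, which are exactly the first two terms of \eqref{eq:menergy}.

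The heart of the matter — and the step I expect to be the only real obstacle — is the nonlinear contribution. Substituting the convolution term for $\partial_t\widehat u(\xi_i)$ turns the $i$-th summand into a $(k+1)$-linear expression: the variable $\xi_i$ is split into two new frequencies $\eta_1,\eta_2$ with $\xi_i=\eta_1+\eta_2$, the multiplier becomes $m$ with its $i$-th argument replaced by $\eta_1+\eta_2$, and there is an extra factor $\xi_i=\eta_1+\eta_2$ coming from $\frac{i\xi_i}{2}$. Because $m$ is symmetric I can slide the combined argument into the last slot, and because $\Lambda$ is applied to $k$ copies of the same $u$ I can freely relabel the $k-1$ spectator frequencies as $\xi_1,\ldots,\xi_{k-1}$ and the two split frequencies as $\xi_k,\xi_{k+1}$. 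Each of the $k$ summands then collapses to the identical functional $\Lambda_{k+1}\big(m(\xi_1,\ldots,\xi_{k-1},\xi_k+\xi_{k+1})(\xi_k+\xi_{k+1})\big)$, so summing over $i$ and carrying the prefactor $-\frac{i}{2}$ yields the claimed $-i\frac{k}{2}\Lambda_{k+1}(\cdots)$. The care needed here lies precisely in checking that the symmetry of $m$ together with the equality of all $u$-factors makes the relabeling legitimate, so that the $k$ terms genuinely coincide rather than merely agreeing up to permutations.
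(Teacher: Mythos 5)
Your proof is correct and is precisely the direct verification the paper alludes to (the paper omits the computation, stating only that the identity "may be directly verified" from \eqref{scaledkawa}): differentiate under the integral, substitute the Fourier-transformed equation, and use the symmetry of $m$ together with the identity of the $k$ factors of $\widehat u$ to collapse the $k$ nonlinear summands into $-i\tfrac{k}{2}\Lambda_{k+1}(\cdots)$. The signs and the symbols $h_k$, $v_k$ all check out, so nothing further is needed.
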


We then follow the I-method \cite{I-method} to define a set of
modified energies. Let $m:\R\rightarrow \R$ be an arbitrary even
$\R$-valued function and define the operator by
\begin{align*}
\widehat{If}(\xi)=m(\xi)\widehat{f}(\xi),
\end{align*}
where the multiplier $m(\xi)$ is smooth, monotone, and of the form
for $N\gg 1$
\begin{align}\label{eq:m}
m(\xi)=\left\{
\begin{array}{r}
1, \quad \quad |\xi|<N,\\
N^{-s}|\xi|^s,\quad  |\xi|>2N.
\end{array}
\right.
\end{align}
We define the modified energy $E_I^2(t)$ by
\begin{align*}
E_I^2(t)=\norm{Iu(t)}_{L^2}^2.
\end{align*}
Using Plancherel's identity and that $m$ and $u$ are $\R$-valued,
and $m$ is even, we get
\[E_I^2(t)=\Lambda_2(m(\xi_1)m(\xi_2)).\]
Using \eqref{eq:menergy} then we have
\begin{align*}
\frac{d}{dt}E_I^2(t)=\Lambda_2(m(\xi_1)m(\xi_2)h_2)-\Lambda_2(m(\xi_1)m(\xi_2)v_2)-i\Lambda_3(m(\xi_1)m(\xi_2+\xi_3)(\xi_2+\xi_3)).
\end{align*}
The first two terms vanish. We symmetrize the third term to get
\begin{align*}
\frac{d}{dt}E_I^2(t)=\Lambda_3(-i[m(\xi_1)m(\xi_2+\xi_3)(\xi_2+\xi_3)]_{sym}).
\end{align*}
Let us denote
\begin{align*}
M_3(\xi_1,\xi_2,\xi_3)=-i[m(\xi_1)m(\xi_2+\xi_3)(\xi_2+\xi_3)]_{sym}.
\end{align*}
Form the new modified energy
\[E_I^3(t)=E_I^2(t)+\Lambda_3(\sigma_3)\]
where the symmetric function $\sigma_3$ will be chosen momentarily
to achieve a cancellation. Applying \eqref{eq:menergy} gives
\begin{align}\label{eq:E3}
\frac{d}{dt}E_I^3(t)=\Lambda_3(M_3)+\Lambda_3(\sigma_3h_3)-\Lambda_3(\sigma_3v_3)-\half
3 i\Lambda_4(\sigma_3(\xi_1,\xi_2,\xi_3+\xi_4)(\xi_3+\xi_4)).
\end{align}
Compared to the KdV case \cite{I-method}, there is one more term to
cancel, so we choose
\begin{align*}
\sigma_3=-\frac{M_3}{h_3-v_3}
\end{align*}
to force the three $\Lambda_3$ terms in \eqref{eq:E3} to cancel.
Hence if we denote
\begin{align*}
M_4(\xi_1,\xi_2,\xi_3,\xi_4)=-i\half
3[\sigma_3(\xi_1,\xi_2,\xi_3+\xi_4)(\xi_3+\xi_4)]_{sym}
\end{align*}
then
\begin{align*}
\frac{d}{dt}E_I^3(t)=\Lambda_4(M_4).
\end{align*}
Similarly defining
\[E_I^4(t)=E_I^3(t)+\Lambda_4(\sigma_4)\]
with
\begin{align*}
\sigma_4=-\frac{M_4}{h_4-v_4},
\end{align*}
we obtain
\begin{align*}
\frac{d}{dt}E_I^4(t)=\Lambda_5(M_5)
\end{align*}
where
\begin{align*}
M_5(\xi_1,\ldots,\xi_5)=-2i[\sigma_4(\xi_1,\xi_2,\xi_3,\xi_4+\xi_5)(\xi_4+\xi_5)]_{sym}.
\end{align*}

In order to prove the pointwise estimates for the multiplier
$\sigma_3,\sigma_4$, we need the following lemma which is crucial.
It just follows from simple calculations, thus we do not give the
proof.

\begin{lemma}\label{lem:decreso} (a) Assume $\xi_1+\xi_2+\xi_3+\xi_4=0$, then
\[\xi_1^5+\xi_2^5+\xi_3^5+\xi_4^5=-\frac{5}{2}(\xi_1+\xi_2)(\xi_1+\xi_3)(\xi_2+\xi_3)({\xi_1^2+\xi_2^2+\xi_3^2}+\xi_4^2),\]
and
\[\xi_1^3+\xi_2^3+\xi_3^3+\xi_4^3=-3(\xi_1+\xi_2)(\xi_1+\xi_3)(\xi_2+\xi_3).\]

(b) Assume $\xi_1+\xi_2+\xi_3=0$, then
\[\xi_1^5+\xi_2^5+\xi_3^5=\frac{5}{2}\xi_1\xi_2\xi_3(\xi_1^2+\xi_2^2+\xi_3^2),\]
and
\[\xi_1^3+\xi_2^3+\xi_3^3=3\xi_1\xi_2\xi_3.\]
\end{lemma}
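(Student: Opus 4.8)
The plan is to treat all four identities uniformly as relations among symmetric polynomials, exploiting that both hypotheses force the first elementary symmetric function to vanish. Write $p_k=\xi_1^k+\cdots+\xi_n^k$ for the power sums and $e_1,e_2,\ldots$ for the elementary symmetric functions in the relevant $n$ variables, with $n=3$ for part (b) and $n=4$ for part (a). In each case the constraint is precisely $e_1=0$, so Newton's identities $p_k=e_1p_{k-1}-e_2p_{k-2}+\cdots$ collapse into short recursions that one can run by hand.

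For part (b), with $n=3$ and $e_1=0$, Newton's identities give successively
\[
p_2=-2e_2,\qquad p_3=3e_3,\qquad p_5=-e_2p_3+e_3p_2=-5e_2e_3.
\]
Since for three variables $e_3=\xi_1\xi_2\xi_3$, the relation $p_3=3e_3$ is exactly the stated cubic identity. For the quintic, I substitute $e_2=-\frac{1}{2}p_2=-\frac{1}{2}(\xi_1^2+\xi_2^2+\xi_3^2)$ into $p_5=-5e_2e_3$ to obtain $p_5=\frac{5}{2}(\xi_1^2+\xi_2^2+\xi_3^2)\,\xi_1\xi_2\xi_3$, as claimed.

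For part (a), with $n=4$ and $e_1=0$, the same recursion yields $p_2=-2e_2$, $p_3=3e_3$ and $p_5=-5e_2e_3$ (the $e_4$ contribution drops out of $p_5$ since it multiplies $p_1=0$). The only extra step is to recognize the factored forms. Eliminating $\xi_4=-(\xi_1+\xi_2+\xi_3)$ together with the elementary expansion $(\xi_1+\xi_2+\xi_3)^3-\xi_1^3-\xi_2^3-\xi_3^3=3(\xi_1+\xi_2)(\xi_2+\xi_3)(\xi_3+\xi_1)$ gives $p_3=-3(\xi_1+\xi_2)(\xi_1+\xi_3)(\xi_2+\xi_3)$, which is the cubic identity and simultaneously identifies $(\xi_1+\xi_2)(\xi_1+\xi_3)(\xi_2+\xi_3)=-e_3$. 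Feeding this and $\xi_1^2+\xi_2^2+\xi_3^2+\xi_4^2=p_2=-2e_2$ into the right-hand side of the quintic identity turns it into $-\frac{5}{2}(-e_3)(-2e_2)=-5e_2e_3=p_5$, proving (a).

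There is essentially no analytic obstacle: everything reduces to elementary algebra. The only point requiring care is the symmetric factorization of the cubic, i.e.\ identifying $p_3$ with $-3(\xi_1+\xi_2)(\xi_1+\xi_3)(\xi_2+\xi_3)$; this can be read off from the cubic expansion above, or verified directly by full expansion after eliminating one variable. I would present the Newton's-identity route, since it disposes of the cubic and quintic cases simultaneously and makes transparent the structural reason for the factors $e_2$ and $e_3$ appearing on the right-hand sides.
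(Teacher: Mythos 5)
Your proof is correct. The paper itself offers no proof of this lemma---it simply remarks that the identities ``follow from simple calculations''---so there is no argument to compare against; any complete verification is an improvement on what is printed. Your Newton's-identity route is a clean way to organize that calculation: with $e_1=0$ the recursion immediately gives $p_3=3e_3$ and $p_5=-e_2p_3+e_3p_2=-5e_2e_3$ in both the three- and four-variable cases, and the remaining work is only the factorization $(\xi_1+\xi_2+\xi_3)^3-\xi_1^3-\xi_2^3-\xi_3^3=3(\xi_1+\xi_2)(\xi_2+\xi_3)(\xi_3+\xi_1)$, which identifies $e_3=-(\xi_1+\xi_2)(\xi_1+\xi_3)(\xi_2+\xi_3)$ on the hyperplane $\xi_4=-(\xi_1+\xi_2+\xi_3)$. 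I checked each step ($p_2=-2e_2$, $p_3=3e_3$, $p_5=-5e_2e_3$, the dropping of the $e_4$ term against $p_1=0$, and the final substitutions) and they are all sound; the approach also explains structurally why the right-hand sides factor through $e_2$ and $e_3$, which a brute-force expansion would not.
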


Now we turn to give the pointwise estimates of the multiplier. It is
easy to see that if $m$ is of the form \eqref{eq:m}, then $m^2$
satisfies
\begin{eqnarray}\label{eq:eImul}
\left\{
\begin{array}{l}
m^2(\xi)\sim m^2(\xi') \mbox{ for } |\xi|\sim|\xi'|,\\
(m^2)'(\xi)=O(\frac{m^2(\xi)}{|\xi|}),\\
(m^2)''(\xi)=O(\frac{m^2(\xi)}{|\xi|^2}).
\end{array}\right.
\end{eqnarray}
We will need two mean value formulas which follow immediately from
the fundamental theorem of calculus. If $|\eta|,|\lambda|\ll |\xi|$,
then we have
\begin{equation}\label{eq:mvt}
|a(\xi+\eta)-a(\xi)|\les |\eta|\sup_{|\xi'|\sim |\xi|}|a'(\xi')|,
\end{equation}
and the double mean value formula that
\begin{equation}\label{eq:dmvt}
|a(\xi+\eta+\lambda)-a(\xi+\eta)-a(\xi+\lambda)+a(\xi)|\les
|\eta||\lambda|\sup_{|\xi'|\sim |\xi|}|a''(\xi')|.
\end{equation}
In order to use the formulas, we extend the surface supported
multiplier $\sigma_3$ to the whole space as in \cite{KochTataru}.

\begin{proposition}
If $m$ is of the form \eqref{eq:m}, then for each dyadic
$\lambda\leq \eta$ there is an extension of $\sigma_3$ from the
diagonal set
\[\{(\xi_1,\xi_2,\xi_3)\in \Gamma_3(\R), |\xi_1|\sim \lambda,\quad |\xi_2|, |\xi_3|\sim \eta\}\]
to the full dyadic set
\[\{(\xi_1,\xi_2,\xi_3)\in \R^3, |\xi_1|\sim \lambda,\quad |\xi_2|, |\xi_3|\sim \eta\}\]
which satisfies
\begin{equation}\label{eq:m3}
|\partial_1^{\beta_1}\partial_2^{\beta_2}\partial_3^{\beta_3}\sigma_3(\xi_1,\xi_2,\xi_3)|\leq
C m^2(\lambda)\eta^{-4}\lambda^{-\beta_1}\eta^{-\beta_2-\beta_3}.
\end{equation}
\end{proposition}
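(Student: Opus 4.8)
The plan is to reduce \eqref{eq:m3} to the study of the two explicit objects $M_3$ and $h_3-v_3$ on the diagonal $\Gamma_3$, to extend each off the diagonal by a carefully chosen formula, and then to divide. First I would record the closed forms. Carrying out the symmetrization in $M_3=-i[m(\xi_1)m(\xi_2+\xi_3)(\xi_2+\xi_3)]_{sym}$ and using that $m$ is even together with $\xi_2+\xi_3=-\xi_1$, one obtains on $\Gamma_3$
\[
M_3(\xi_1,\xi_2,\xi_3)=\frac{i}{3}\big[\xi_1 m^2(\xi_1)+\xi_2 m^2(\xi_2)+\xi_3 m^2(\xi_3)\big],
\]
while Lemma \ref{lem:decreso}(b) gives
\[
h_3-v_3=i\,\xi_1\xi_2\xi_3\Big(3\mu-\frac{5}{2}\big(\xi_1^2+\xi_2^2+\xi_3^2\big)\Big).
\]
Since $\sigma_3=-M_3/(h_3-v_3)$, the estimate \eqref{eq:m3} will follow from three ingredients: an ellipticity and derivative bound for the denominator, a size-with-cancellation estimate for the numerator, and the Leibniz/quotient rule.

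For the denominator I would use the same polynomial expression as its extension to the three independent variables on the block $\{|\xi_1|\sim\lambda,\ |\xi_2|,|\xi_3|\sim\eta\}$. We may assume $\eta\gg1$, since if $\eta\lesssim N$ then $m\equiv1$ on the block and $M_3\equiv0$ on $\Gamma_3$, so the zero extension works. For $\eta\gg1$ we have $|\xi_1\xi_2\xi_3|\sim\lambda\eta^2$ and $|3\mu-\frac{5}{2}(\xi_1^2+\xi_2^2+\xi_3^2)|\sim\eta^2$ (here $\mu\leq1$ is harmless), hence $|h_3-v_3|\sim\lambda\eta^4$, so the extended denominator never vanishes on the block. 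Differentiating the polynomial and using $\lambda\leq\eta$ one checks directly that $|\partial_1^{\beta_1}\partial_2^{\beta_2}\partial_3^{\beta_3}(h_3-v_3)|\les \lambda\eta^4\,\lambda^{-\beta_1}\eta^{-\beta_2-\beta_3}$, i.e. each $\partial_1$ costs $\lambda^{-1}$ and each $\partial_2,\partial_3$ costs $\eta^{-1}$, matching the weights in \eqref{eq:m3}.

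The heart of the matter is the numerator, and this is the step I expect to be the main obstacle. A naive extension $\frac{i}{3}\sum_j\xi_j m^2(\xi_j)$ fails: off $\Gamma_3$ its size is $\sim\eta\,m^2(\eta)$ rather than $\lambda\,m^2(\lambda)$, and its $\partial_2$-derivative is $\sim m^2(\eta)$, too large for \eqref{eq:m3}. The cancellation from $\xi_1+\xi_2+\xi_3=0$ must therefore be built into the extension. I would single out the smallest frequency $\xi_1$, subtract the quantity $\frac{i}{3}m^2(\xi_2)(\xi_1+\xi_2+\xi_3)$ (which vanishes on $\Gamma_3$), and then replace $m^2(\xi_2)$ in the remaining $\xi_3$-term by $m^2(\xi_3+\xi_1)$, which agrees with it on $\Gamma_3$ since there $\xi_3+\xi_1=-\xi_2$ and $m$ is even. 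This defines the extension
\[
\widetilde M_3=\frac{i}{3}\Big\{\xi_1\big[m^2(\xi_1)-m^2(\xi_2)\big]+\xi_3\big[m^2(\xi_3)-m^2(\xi_3+\xi_1)\big]\Big\},
\]
in which the crucial point is that the second difference is now an increment of $m^2$ by the genuinely small shift $\xi_1\sim\lambda$, valid throughout the block rather than only on $\Gamma_3$. In the first term the prefactor $\xi_1$ already supplies the smallness, so $|m^2(\xi_1)-m^2(\xi_2)|\les m^2(\lambda)$ gives $\les\lambda\,m^2(\lambda)$; in the second term the mean value formula \eqref{eq:mvt} applied to the shift $\xi_1$, together with the second bound in \eqref{eq:eImul}, yields $|m^2(\xi_3)-m^2(\xi_3+\xi_1)|\les\lambda\,m^2(\eta)/\eta$, whence $\xi_3[\cdots]\les\lambda\,m^2(\eta)\les\lambda\,m^2(\lambda)$. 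For the derivatives I would differentiate these factored expressions, estimating each factor by \eqref{eq:eImul} and each surviving increment by \eqref{eq:mvt}, resorting to the double mean value formula \eqref{eq:dmvt} for the mixed second-order increments; every $\partial_2$ or $\partial_3$ lands either on a factor $m^2(\cdot)$ of argument $\sim\eta$ (gaining $\eta^{-1}$) or on the difference structure in a compensated way, and $\partial_1$ loses at most $\lambda^{-1}$. The upshot is $|\partial_1^{\beta_1}\partial_2^{\beta_2}\partial_3^{\beta_3}\widetilde M_3|\les \lambda\,m^2(\lambda)\,\lambda^{-\beta_1}\eta^{-\beta_2-\beta_3}$.

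Finally I would combine the two estimates. Writing $\sigma_3=-\widetilde M_3\cdot(h_3-v_3)^{-1}$ and applying the Leibniz rule, each derivative of the reciprocal of the elliptic denominator preserves the weights $\lambda^{-\beta_1}\eta^{-\beta_2-\beta_3}$ relative to $(\lambda\eta^4)^{-1}$, a routine consequence of $|h_3-v_3|\sim\lambda\eta^4$ and the denominator bounds of the second paragraph. Multiplying the numerator size $\lambda\,m^2(\lambda)$ by the factor $(\lambda\eta^4)^{-1}$ produces the base size $m^2(\lambda)\eta^{-4}$, and the matching weights give exactly \eqref{eq:m3}. The degenerate case $\lambda\sim\eta$ requires no cancellation and is handled trivially: there $|\widetilde M_3|\les\eta\,m^2(\eta)$ and $|h_3-v_3|\sim\eta^5$, with every derivative simply costing $\eta^{-1}$. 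The only genuinely delicate point, as stressed above, is the choice of extension of the numerator in the third paragraph so that the cancellation survives differentiation; everything else is an elementary application of \eqref{eq:eImul}, \eqref{eq:mvt} and \eqref{eq:dmvt}.
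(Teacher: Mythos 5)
Your proposal is correct and follows essentially the same route as the paper: extend the denominator by the polynomial $\tfrac{5i}{2}\xi_1\xi_2\xi_3(\xi_1^2+\xi_2^2+\xi_3^2-\tfrac{6}{5}\mu)$, treat $\lambda\sim\eta$ with the naive numerator extension, and for $\lambda\ll\eta$ rewrite the numerator so the cancellation becomes an increment by the small frequency $\xi_1$, controlled via \eqref{eq:mvt} and \eqref{eq:eImul}. The only (immaterial) difference is the exact form of that rewriting — the paper uses $i\bigl(m^2(\xi_1)\xi_1+m^2(\xi_2)\xi_2-m^2(\xi_1+\xi_2)(\xi_1+\xi_2)\bigr)$, which agrees with yours on $\Gamma_3(\R)$ and obeys the same bounds.
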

\begin{proof}
We may assume $\max(|\xi_1|,|\xi_2|,|\xi_3|)\gg 1$, otherwise
$\sigma_3\equiv 0$. Since on the hyperplane $\xi_1+\xi_2+\xi_3=0$,
\[v_3=i(\xi_1^5+\xi_2^5+\xi_3^5)=\frac{5i}{2}\xi_1\xi_2\xi_3(\xi_1^2+\xi_2^2+\xi_3^2)\]
is with a size about $\lambda \eta^4$ and
\[M_3(\xi_1,\xi_2,\xi_3)=-i[m(\xi_1)m(\xi_2+\xi_3)(\xi_2+\xi_3)]_{sym}=i(m^2(\xi_1)\xi_1+m^2(\xi_2)\xi_2+m^2(\xi_3)\xi_3),\]
if $\lambda \sim \eta$, we extend $\sigma_3$ by setting
\begin{equation}
\sigma_3(\xi_1,\xi_2,\xi_3)=-\frac{i(m^2(\xi_1)\xi_1+m^2(\xi_2)\xi_2+m^2(\xi_3)\xi_3)}{\frac{5i}{2}\xi_1\xi_2\xi_3(\xi_1^2+\xi_2^2+\xi_3^2-\frac{6}{5}\mu)},
\end{equation}
and if $\lambda\ll \eta$, we extend $\sigma_3$ by setting
\begin{equation}
\sigma_3(\xi_1,\xi_2,\xi_3)=-\frac{i(m^2(\xi_1)\xi_1+m^2(\xi_2)\xi_2-m^2(\xi_1+\xi_2)(\xi_1+\xi_2))}{\frac{5i}{2}\xi_1\xi_2\xi_3(\xi_1^2+\xi_2^2+\xi_3^2-\frac{6}{5}\mu)}.
\end{equation}
 From \eqref{eq:mvt} and \eqref{eq:eImul}, we see that \eqref{eq:m3} holds.
\end{proof}

Now we give the pointwise bounds for $\sigma_4$ which is key to
estimate the growth of $E^4_I(t)$.
\begin{proposition}
Assume $m$ is of the form \eqref{eq:m}. In the region where
$|\xi_i|\sim N_i,|\xi_j+\xi_k|\sim N_{jk}$ for $N_i, N_{jk}$ dyadic
and  $N_1\geq N_2\geq N_3\geq N_4$,
\begin{equation}\label{eq:m4}
\frac{|M_4(\xi_1,\xi_2,\xi_3,\xi_4)|}{|h_4-v_4|}\les
\frac{m^2(\min(N_i,N_{jk}))}{(N+N_1)^2(N+N_2)^2(N+N_3)^3(N+N_4)}.
\end{equation}
\end{proposition}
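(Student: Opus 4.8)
The plan is to factor both the numerator $M_4$ and the denominator $h_4-v_4$ through Lemma \ref{lem:decreso}, compare their homogeneities, and treat the cancellation-sensitive frequency regimes separately. First I would record the denominator. On $\xi_1+\xi_2+\xi_3+\xi_4=0$, Lemma \ref{lem:decreso}(a) gives
\[
h_4-v_4=\half{5i}(\xi_1+\xi_2)(\xi_1+\xi_3)(\xi_2+\xi_3)\brk{\xi_1^2+\xi_2^2+\xi_3^2+\xi_4^2-\tfrac65\mu},
\]
so the three linear factors are exactly the pair-sums and $|(\xi_1+\xi_2)(\xi_1+\xi_3)(\xi_2+\xi_3)|\sim N_{12}N_{13}N_{23}$. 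A useful preliminary reduction is that $\sigma_3$, hence every summand of $M_4$, vanishes whenever all three of its arguments have modulus $<N$: there $m\equiv1$ and $M_3=i(\xi_1+\xi_2+\xi_3)=0$ on the hyperplane. Since each summand $\sigma_3(\xi_a,\xi_b,\xi_c+\xi_d)$ carries a pair-sum $\xi_c+\xi_d$ with $N_{cd}\le2N_1$, nonvanishing forces $N_1\ges N$. Assuming this, $N+N_1\sim N_1$, and because $|\mu|\le1\ll N_1^2$ the quadratic factor satisfies $|\xi_1^2+\xi_2^2+\xi_3^2+\xi_4^2-\frac65\mu|\sim N_1^2\sim(N+N_1)^2$, whence
\[
|h_4-v_4|\ges N_{12}N_{13}N_{23}(N+N_1)^2.
\]

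Next I would expand $M_4$. By the symmetry of $\sigma_3$ in its three arguments the symmetrization collapses to a finite sum
\[
M_4=-\rev{4}i\sum_{\{c,d\}}\sigma_3(\xi_a,\xi_b,\xi_c+\xi_d)(\xi_c+\xi_d),
\]
over the $\binom{4}{2}=6$ pairs $\{c,d\}\subset\{1,2,3,4\}$ with complementary pair $\{a,b\}$. On the hyperplane each $\sigma_3$-triple sums to zero, so two of its arguments are comparable and largest ($\sim\eta$) and one is smallest ($\sim\lambda$); this places us in the regime of the extension \eqref{eq:m3}, which gives $|\sigma_3(\xi_a,\xi_b,\xi_c+\xi_d)|\les m^2(\lambda)\eta^{-4}$ together with the stated derivative bounds. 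Since the smallest $\sigma_3$-argument appearing in any term is $\ge\min(N_i,N_{jk})$, monotonicity of $m$ yields $m^2(\lambda)\les m^2(\min(N_i,N_{jk}))$, so the weight in \eqref{eq:m4} is automatic and only the powers of the dyadic frequencies remain to be matched.

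In the generic configurations, where the two largest frequencies and the relevant pair-sums are comparable, the crude bound $|\sigma_3|\les m^2(\lambda)\eta^{-4}$ combined with $|h_4-v_4|\ges N_{12}N_{13}N_{23}(N+N_1)^2$ already closes \eqref{eq:m4}: once the weight $m^2$ is accounted for as above, a direct power count of the remaining dyadic factors matches the two sides.

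The hard part will be the resonant configurations in which two large frequencies nearly cancel, so that some pair-sum $N_{jk}$ is much smaller than $N_1$ (for instance $\xi_1\approx-\xi_2$ and $\xi_3\approx-\xi_4$). There the small factor $N_{jk}$ in the denominator is \emph{not} compensated by the term-by-term estimate, and the bound is recovered only through cancellation inside the full symmetrized sum; indeed, a naive pairing of complementary summands alone is still too lossy, and one must exploit the additional cancellation among the three pairing groups. Concretely, I would rewrite $M_4$ as a combination of differences of $\sigma_3$ evaluated at arguments that differ by the small pair-sums, and apply the mean value formula \eqref{eq:mvt} — iterated, or replaced by the double mean value formula \eqref{eq:dmvt} when several pair-sums are simultaneously small — using the derivative bounds \eqref{eq:m3} and the symbol estimates \eqref{eq:eImul} for $m^2$ to convert each order of cancellation into a power of the corresponding small frequency. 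Carrying this bookkeeping through every resonant case, and verifying that the gained factors exactly absorb the small pair-sums appearing in $|h_4-v_4|$, is where the real work lies; this is precisely the pointwise-multiplier analysis of \cite{GuoWang} that the paper invokes, adapted here to the extra fifth-order term.
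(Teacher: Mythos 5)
Your setup is sound and coincides with the paper's: the factorization $h_4-v_4=\frac{5i}{2}(\xi_1+\xi_2)(\xi_1+\xi_3)(\xi_2+\xi_3)(\sum\xi_i^2-\frac{6}{5}\mu)$, the reduction to $N_1\sim N_2\ges N$, the collapse of the symmetrization to six summands, the pairing of complementary summands into differences (using that $\sigma_3$ evaluated at a zero-sum triple of the form $(-\xi_c,-\xi_d,\xi_c+\xi_d)$ can be subtracted off), the crude bound $|\sigma_3|\les m^2(\lambda)\eta^{-4}$ in the non-resonant regime, and the use of \eqref{eq:mvt}/\eqref{eq:dmvt} plus inter-group cancellation in the resonant regime are all exactly the ingredients of the paper's proof. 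You have also correctly diagnosed that pairing alone is insufficient and that the groups $II$ and $III$ must be combined.

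The gap is that the resonant cases --- which you yourself identify as ``where the real work lies'' --- are not actually carried out, and they cannot be dispatched by a generic invocation of the mean value formulas. The decisive case is the paper's Case 2b: $N_3,N_4\ll N$ and $N_{12}=N_{34}\ll N$, so that $m^2(\min(N_i,N_{jk}))=1$, $|v_4|\sim N_{12}N_1^4$, and the target is $N_1^{-8}$ with \emph{no} small factor available on the right-hand side to absorb $N_{12}^{-1}$, $N_3^{-1}$, or $N_4^{-1}$. There a single application of \eqref{eq:mvt} to $II+III$ still leaves uncancelled factors of $N_3,N_4,N_{12}$ in the denominator; the paper has to split $II+III=J_1+J_2+J_3$, further decompose $J_3=J_{31}+J_{32}$ and $J_{31}=J_{311}+J_{312}$, write out $\sigma_3$ \emph{explicitly} using $m^2(\xi_3)=m^2(\xi_4)=1$, and compare $J_{311}$ with a hand-built surrogate $J_{311}'$ before \eqref{eq:dmvt} can be applied to the surviving combination. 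That the six terms admit exactly this much algebraic cancellation is the content of the proposition, not a bookkeeping consequence of the derivative bounds \eqref{eq:m3}; without exhibiting it (or at least the identity that makes $J_{311}'$ a sum of second differences), the claim that ``the gained factors exactly absorb the small pair-sums'' is an assertion, not a proof. A complete argument must also run the intermediate cases (the paper's 1b--1d and 2a), where one or two pair-sums are small and one must choose, term by term, whether to apply \eqref{eq:mvt} once, twice, or \eqref{eq:dmvt}, depending on the relative sizes of $N_{12}$ and $N_3$.
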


\begin{proof}
We will use the ideas in \cite{GuoWang}. From Lemma
\ref{lem:decreso} it suffices to prove
\begin{align*}
\frac{|M_4(\xi_1,\xi_2,\xi_3,\xi_4)|}{|v_4|}\les
\frac{m^2(\min(N_i,N_{jk}))}{(N+N_1)^2(N+N_2)^2(N+N_3)^3(N+N_4)}.
\end{align*}
Since $\xi_1+\xi_2+\xi_3+\xi_4=0$, then $N_1\sim N_2$. We can also
assume that $N_1\sim N_2 \ges N$, otherwise $M_4$ vanishes, since
$m^2(\xi)=1$ if $|\xi|\leq N$. If $\max(N_{12},N_{13},N_{14})\ll
N_1$, then $\xi_3\approx-\xi_1,\ \xi_4\approx -\xi_1$, which
contradicts that $\xi_1+\xi_2+\xi_3+\xi_4=0$. Hence we get
$\max(N_{12},N_{13},N_{14})\sim N_1$. We rewrite the right-hand side
of \eqref{eq:m4} as
\begin{equation}
\frac{m^2(\min(N_i,N_{jk}))}{{N_1}^4(N+N_3)^3(N+N_4)}.
\end{equation}

From Lemma \ref{lem:decreso} we get if $\xi_1+\xi_2+\xi_3+\xi_4=0$
then
\[v_4=\xi_1^5+\xi_2^5+\xi_3^5+\xi_4^5=-\frac{5}{2}(\xi_1+\xi_2)(\xi_1+\xi_3)(\xi_2+\xi_3)({\xi_1^2+\xi_2^2+\xi_3^2}+\xi_4^2)\]
is with size $N_{12}N_{13}N_{14}N_1^2$. From the construction of
$M_4$ we get
\begin{align}\label{eq:rm4}
CM_4(\xi_1,\xi_2,\xi_3,\xi_4)=&[\sigma_3(\xi_1,\xi_2,\xi_3+\xi_4)(\xi_3+\xi_4)]_{sym}\nonumber\\
=&\sigma_3(\xi_1,\xi_2,\xi_3+\xi_4)(\xi_3+\xi_4)+\sigma_3(\xi_1,\xi_3,\xi_2+\xi_4)(\xi_2+\xi_4)\nonumber\\
&+\sigma_3(\xi_1,\xi_4,\xi_2+\xi_3)(\xi_2+\xi_3)+\sigma_3(\xi_2,\xi_3,\xi_1+\xi_4)(\xi_1+\xi_4)\nonumber\\
&+\sigma_3(\xi_2,\xi_4,\xi_1+\xi_3)(\xi_1+\xi_3)+\sigma_3(\xi_3,\xi_4,\xi_1+\xi_2)(\xi_1+\xi_2)\nonumber\\
=&[\sigma_3(\xi_1,\xi_2,\xi_3+\xi_4)-\sigma_3(-\xi_3,-\xi_4,\xi_3+\xi_4)](\xi_3+\xi_4)\nonumber\\
&+[\sigma_3(\xi_1,\xi_3,\xi_2+\xi_4)-\sigma_3(-\xi_2,-\xi_4,\xi_2+\xi_4)](\xi_2+\xi_4)\nonumber\\
&+[\sigma_3(\xi_1,\xi_4,\xi_2+\xi_3)-\sigma_3(-\xi_2,-\xi_3,\xi_2+\xi_3)](\xi_2+\xi_3)\nonumber\\
:=&I+II+III.
\end{align}
The bound \eqref{eq:m4} will follow from case by case analysis.

{\bf Case 1.} $|N_4|\ges \half{N}$.

{\bf Case 1a.} $N_{12}, N_{13}, N_{14}\ges N_1$.

For this case, we just use \eqref{eq:m3}, then we get
\begin{align*}
\frac{|M_4(\xi_1,\xi_2,\xi_3,\xi_4)|}{|v_4|}\les
\frac{m^2(N_4)}{N_1^8},
\end{align*}
which suffices to give the bound \eqref{eq:m4}.

{\bf Case 1b.} $N_{12}\ll N_1$, $N_{13}\ges N_1$, $N_{14}\ges N_1$.

For the contribution of I, we just use \eqref{eq:m3}, then we get
\begin{align*}
\frac{|I|}{|v_4|}\les \frac{m^2(\min(N_4, N_{12}))}{N_1^8},
\end{align*}
which suffices to give the bound \eqref{eq:m4}.

Contribution of II. We first write
\begin{align*}
II=&[\sigma_3(\xi_1,\xi_3,\xi_2+\xi_4)-\sigma_3(-\xi_2,-\xi_4,\xi_2+\xi_4)](\xi_2+\xi_4)\nonumber\\
=&[\sigma_3(\xi_1,\xi_3,\xi_2+\xi_4)-\sigma_3(-\xi_2,\xi_3,\xi_2+\xi_4)](\xi_2+\xi_4)\nonumber\\
&+[\sigma_3(-\xi_2,\xi_3,\xi_2+\xi_4)-\sigma_3(-\xi_2,-\xi_4,\xi_2+\xi_4)](\xi_2+\xi_4)\nonumber\\
=&II_1+II_2.
\end{align*}
If $N_{12}\ges N_3$, then using \eqref{eq:mvt}, \eqref{eq:m3} for
$II_1$ and using \eqref{eq:m3} for $II_2$, we get
\[\frac{|II|}{|v_4|}\les  \frac{m^2(N_4)}{N_1^7 N_3}.\]
If $N_{12}\ll N_3$, using \eqref{eq:mvt}, \eqref{eq:m3} for both
$II_1$ and $II_2$, then we get
\[\frac{|II|}{|v_4|}\les  \frac{m^2(N_4)}{N_1^7 N_3}, \]

Contribution of III. This is identical to II.

{\bf Case 1c.} $N_{12}\ll N_1$, $N_{13}\ll N_1$, $N_{14}\ges N_1$.

Since $N_{12}\ll N_1$, $N_{13}\ll N_1$, then $N_1\sim N_2\sim
N_3\sim N_4$.

Contribution of I. We first write
\begin{align*}
I=&[\sigma_3(\xi_1,\xi_2,\xi_3+\xi_4)-\sigma_3(-\xi_3,\xi_2,\xi_3+\xi_4)](\xi_3+\xi_4)\nonumber\\
&+[\sigma_3(-\xi_3,\xi_2,\xi_3+\xi_4)-\sigma_3(-\xi_3,-\xi_4,\xi_3+\xi_4)](\xi_3+\xi_4)\nonumber\\
:=&I_1+I_2.
\end{align*}
Using \eqref{eq:m3}, \eqref{eq:mvt} for both $I_1$ and $I_2$, then
we get
\begin{align*}
\frac{|I|}{|v_4|}\les\frac{m^2(N_{12})}{N_1^8}.
\end{align*}

Contribution of II. This is identical to I.

Contribution of III. We first write
\begin{align*}
III=&[\sigma_3(\xi_1,\xi_4,\xi_2+\xi_3)-\sigma_3(-\xi_2,-\xi_3,\xi_2+\xi_3)](\xi_2+\xi_3)\nonumber\\
=&\frac{1}{2}[\sigma_3(\xi_1,\xi_4,\xi_2+\xi_3)-\sigma_3(-\xi_2,-\xi_3,\xi_2+\xi_3)\nonumber\\
&-\sigma_3(-\xi_3,-\xi_2,\xi_2+\xi_3)+\sigma_3(\xi_4,\xi_1,\xi_2+\xi_3)](\xi_2+\xi_3).
\end{align*}
Using \eqref{eq:dmvt} five times, we have
\begin{align*}
\frac{|III|}{|v_4|}\les\frac{m^2(N_{1})}{N_1^8}.
\end{align*}

{\bf Case 1d.} $N_{12}\ll N_1$, $N_{13}\ges N_1$, $N_{14}\ll N_1$.

This case is identical to Case 1c.

{\bf Case 2.} $N_4\ll N/2$.

In this case we have $m^2(\min(N_i,N_{jk}))=1$, and $N_{13}\sim
|\xi_1+\xi_3|=|\xi_2+\xi_4|\sim N_1$. We discuss this case in the
following two subcases.

{\bf Case 2a.} $N_1/4>N_{12}\ges N/2$.

Since $N_4\ll N/2$ and $|\xi_3+\xi_4|=|\xi_1+\xi_2|\ges N/2$, then
$N_3\ges N/2$. From $|v_4|\sim N_{12}N_1^4$, then we bound the six
terms in \eqref{eq:rm4} respectively, and get
\begin{align*}
\frac{|M_4|}{|v_4|}\les \frac{1}{N_1^4N_3^3N},
\end{align*}
which suffices to give the bound \eqref{eq:m4}.

{\bf Case 2b.} $N_{12}\ll N/2$.

Since $N_{12}=N_{34}\ll N/2$ and $N_4\ll N/2$, then we must have
$N_3\ll N/2$, and $N_{13}\sim N_{14}\sim N_1$.

Contribution of I. Since $N_3, N_4, N_{34}\ll N/2$, then we have
$\sigma_3(-\xi_3,-\xi_4,\xi_3+\xi_4)=0$. Thus it follows from
\eqref{eq:m3} that
\begin{align*}
\frac{|I|}{|v_4|}\les
\frac{|\sigma_3(\xi_1,\xi_2,\xi_3+\xi_4)|}{N_1^4}\les
\frac{1}{N_1^8}.
\end{align*}

Contribution of II and III. We have two items of $N_3, N_4, N_{12}$
in the denominator, which will cause a problem. Thus we can't deal
with II and III separately, but we need to exploit the cancelation
between II and III. We rewrite
\begin{align*}
II+III=&[\sigma_3(\xi_1,\xi_3,\xi_2+\xi_4)-\sigma_3(-\xi_2,-\xi_4,\xi_2+\xi_4)](\xi_2+\xi_4)\nonumber\\
&+[\sigma_3(\xi_1,\xi_4,\xi_2+\xi_3)-\sigma_3(-\xi_2,-\xi_3,\xi_2+\xi_3)](\xi_2+\xi_3)\nonumber\\
=&[\sigma_3(\xi_1,\xi_3,\xi_2+\xi_4)-\sigma_3(-\xi_2,-\xi_4,\xi_2+\xi_4)]\xi_4\nonumber\\
&+[\sigma_3(\xi_1,\xi_4,\xi_2+\xi_3)-\sigma_3(-\xi_2,-\xi_3,\xi_2+\xi_3)]\xi_3\nonumber\\
&+[\sigma_3(\xi_1,\xi_3,\xi_2+\xi_4)-\sigma_3(-\xi_2,-\xi_4,\xi_2+\xi_4)\nonumber\\
&\quad+\sigma_3(\xi_1,\xi_4,\xi_2+\xi_3)-\sigma_3(-\xi_2,-\xi_3,\xi_2+\xi_3)]\xi_2\nonumber\\
=&J_1+J_2+J_3.
\end{align*}
We first consider $J_1$. From
\begin{align*}
\frac{|J_1|}{|v_4|}\les
\frac{|[\sigma_3(\xi_1,\xi_3,\xi_2+\xi_4)-\sigma_3(-\xi_2,-\xi_4,\xi_2+\xi_4)]\xi_4|}{N_{12}N_1^4}
\end{align*}
if $N_{12}\ll N_3$ (in this case, $N_3\sim N_4$), using
\eqref{eq:mvt} twice, otherwise using \eqref{eq:mvt} once and
\eqref{eq:m3}, then we get
\begin{align*}
\frac{|J_1|}{|v_4|}\les \frac{1}{N_1^8}.
\end{align*}
The term $J_2$ is identical to the term $J_1$. Now we consider
$J_3$. We first assume that $N_{12}\ges N_3$. Then by the symmetry
of $\sigma_3$, we get
\begin{align*}
J_3=&[\sigma_3(\xi_1,\xi_3,\xi_2+\xi_4)-\sigma_3(-\xi_2-\xi_3,\xi_3,\xi_2)\nonumber\\
&+\sigma_3(\xi_1,\xi_4,\xi_2+\xi_3)-\sigma_3(-\xi_2-\xi_4,\xi_4,\xi_2)]\xi_2.
\end{align*}
From \eqref{eq:mvt} and $N_{12}\ges N_3$, we get
\begin{align*}
\frac{|J_3|}{|v_4|}\les \frac{1}{N_1^8}.
\end{align*}
If $N_{12}\ll N_3$, then $N_3\sim N_4$. We first write
\begin{align*}
J_3=&[\sigma_3(-\xi_2,\xi_3,\xi_2+\xi_4)-\sigma_3(-\xi_2,-\xi_4,\xi_2+\xi_4)\nonumber\\
&\quad+\sigma_3(\xi_1,\xi_4,\xi_2+\xi_3)-\sigma_3(\xi_1,-\xi_3,\xi_2+\xi_3)]\xi_2\nonumber\\
&+[\sigma_3(\xi_1,\xi_3,\xi_2+\xi_4)-\sigma_3(-\xi_2,\xi_3,\xi_2+\xi_4)\nonumber\\
&\quad+\sigma_3(\xi_1,-\xi_3,\xi_2+\xi_3)-\sigma_3(-\xi_2,-\xi_3,\xi_2+\xi_3)]\xi_2\nonumber\\
=&J_{31}+J_{32}.
\end{align*}
It follows from \eqref{eq:mvt} that
\begin{align*}
\frac{|J_{32}|}{|v_4|}\les \frac{1}{N_1^8}.
\end{align*}
It remains to bound $J_{31}$. It follows from \eqref{eq:m3} and
$m^2(\xi_3)=m^2(\xi_4)=1$ that
\begin{align*}
CJ_{31}=&C[{\sigma}_3(-\xi_2,\xi_3,\xi_2+\xi_4)-{\sigma}_3(-\xi_2,-\xi_4,\xi_2+\xi_4)\nonumber\\
&\quad-{\sigma}_3(\xi_1,-\xi_3,\xi_2+\xi_3)+{\sigma}_3(\xi_1,\xi_4,\xi_2+\xi_3)]\xi_2\nonumber\\
=&\frac{-m^2(\xi_2)\xi_2+\xi_3+m^2(\xi_2+\xi_4)(\xi_2+\xi_4)}{-\xi_2\xi_3(\xi_2+\xi_4)[\xi_2^2+\xi_3^2+(\xi_2+\xi_4)^2]}\xi_2\nonumber\\
&\quad
-\frac{-m^2(\xi_2)\xi_2-\xi_4+m^2(\xi_2+\xi_4)(\xi_2+\xi_4)}{\xi_2\xi_4(\xi_2+\xi_4)[\xi_2^2+\xi_4^2+(\xi_2+\xi_4)^2]}\xi_2\nonumber\\
&+\frac{m^2(\xi_1)\xi_1+\xi_4+m^2(\xi_2+\xi_3)(\xi_2+\xi_3)}{\xi_1\xi_4(\xi_2+\xi_3)[\xi_1^2+\xi_4^2+(\xi_2+\xi_3)^2]}\xi_2\nonumber\\
&\quad
-\frac{m^2(\xi_1)\xi_1-\xi_3+m^2(\xi_2+\xi_3)(\xi_2+\xi_3)}{-\xi_1\xi_3(\xi_2+\xi_3)[\xi_1^2+\xi_3^2+(\xi_2+\xi_3)^2]}\xi_2.
\end{align*}
We rewrite $J_{31}$ as following
\begin{align*}
&\big\{\frac{-m^2(\xi_2)\xi_2+m^2(\xi_2+\xi_4)(\xi_2+\xi_4)}{-\xi_2\xi_3(\xi_2+\xi_4)[\xi_2^2+\xi_3^2+(\xi_2+\xi_4)^2]}\xi_2-\frac{-m^2(\xi_2)\xi_2+m^2(\xi_2+\xi_4)(\xi_2+\xi_4)}{\xi_2\xi_4(\xi_2+\xi_4)[\xi_2^2+\xi_4^2+(\xi_2+\xi_4)^2]}\xi_2\\
&-\frac{m^2(\xi_1)\xi_1+m^2(\xi_2+\xi_3)(\xi_2+\xi_3)}{-\xi_1\xi_3(\xi_2+\xi_3)[\xi_1^2+\xi_3^2+(\xi_2+\xi_3)^2]}\xi_2+\frac{m^2(\xi_1)\xi_1+m^2(\xi_2+\xi_3)(\xi_2+\xi_3)}{\xi_1\xi_4(\xi_2+\xi_3)[\xi_1^2+\xi_4^2+(\xi_2+\xi_3)^2]}\xi_2\big\}\\
&+\{\frac{\xi_2}{\xi_1(\xi_2+\xi_3)[\xi_1^2+\xi_4^2+(\xi_2+\xi_3)^2]}-\frac{\xi_2}{\xi_1(\xi_2+\xi_3)[\xi_1^2+\xi_3^2+(\xi_2+\xi_3)^2]}\\
&-\frac{1}{(\xi_2+\xi_4)[\xi_2^2+\xi_3^2+(\xi_2+\xi_4)^2]}+\frac{1}{(\xi_2+\xi_4)[\xi_2^2+\xi_4^2+(\xi_2+\xi_4)^2]}\}\\
&:=J_{311}+J_{312}.
\end{align*}
We consider first the term $J_{312}$.
\begin{align*}
J_{312}=&\{\frac{\xi_2(\xi_3-\xi_4)(\xi_3+\xi_4)}{\xi_1(\xi_2+\xi_3)[\xi_1^2+\xi_4^2+(\xi_2+\xi_3)^2][\xi_1^2+\xi_3^2+(\xi_2+\xi_3)^2]}\\
&+\frac{(\xi_3-\xi_4)(\xi_3+\xi_4)}{(\xi_2+\xi_4)[\xi_2^2+\xi_3^2+(\xi_2+\xi_4)^2][\xi_2^2+\xi_4^2+(\xi_2+\xi_4)^2]}\}
\end{align*}
Thus we get
\[\frac{|J_{312}|}{|v_4|}\les \frac{1}{N_1^8}.\]
It remains to bound the term $J_{311}$. We will compare it with the
following term denoted by $J_{311}'$:
\begin{align*}
&\{\frac{-m^2(\xi_2)\xi_2+m^2(\xi_2+\xi_4)(\xi_2+\xi_4)}{-\xi_2\xi_3(\xi_2+\xi_4)[\xi_2^2+\xi_4^2+(\xi_2+\xi_4)^2]}\xi_2-\frac{-m^2(\xi_2)\xi_2+m^2(\xi_2+\xi_4)(\xi_2+\xi_4)}{\xi_2\xi_4(\xi_2+\xi_4)[\xi_2^2+\xi_4^2+(\xi_2+\xi_4)^2]}\xi_2\\
&+\frac{m^2(\xi_1)\xi_1+m^2(\xi_2+\xi_3)(\xi_2+\xi_3)}{\xi_1\xi_3(\xi_2+\xi_3)[\xi_1^2+\xi_4^2+(\xi_2+\xi_3)^2]}\xi_2+\frac{m^2(\xi_1)\xi_1+m^2(\xi_2+\xi_3)(\xi_2+\xi_3)}{\xi_1\xi_4(\xi_2+\xi_3)[\xi_1^2+\xi_4^2+(\xi_2+\xi_3)^2]}\xi_2\}.
\end{align*}
It is easy to see that as for the term $J_{312}$ we have
\[\frac{|J_{311}-J_{311}'|}{|v_4|}\les \frac{1}{N_1^8}.\]
Thus it remains to show that
\[\frac{|J_{311}'|}{|v_4|}\les \frac{1}{N_1^8}.\]
We rewrite $J_{311}'$ as following
\begin{align*}
-&\frac{\xi_3+\xi_4}{\xi_3\xi_4}\frac{-m^2(\xi_2)\xi_2+m^2(\xi_2+\xi_4)(\xi_2+\xi_4)+m^2(\xi_1)\xi_1+m^2(\xi_2+\xi_3)(\xi_2+\xi_3)}{\xi_2(\xi_2+\xi_4)[\xi_2^2+\xi_4^2+(\xi_2+\xi_4)^2]}\xi_2\\
+&\frac{\xi_3+\xi_4}{\xi_3\xi_4}[m^2(\xi_1)\xi_1+m^2(\xi_2+\xi_3)(\xi_2+\xi_3)]\\
&\times
[\frac{1}{\xi_1(\xi_2+\xi_3)[\xi_1^2+\xi_4^2+(\xi_2+\xi_3)^2]}+\frac{1}{\xi_2(\xi_2+\xi_4)[\xi_2^2+\xi_4^2+(\xi_2+\xi_4)^2]}]\xi_2.
\end{align*}
Therefore, we use \eqref{eq:dmvt} for the first term, and
\eqref{eq:mvt} for the second term, and finally we conclude that
\begin{align*}
\frac{|J_{311}'|}{|v_4|}\les \frac{1}{N_1^8},
\end{align*}
which completes the proof of the proposition.
\end{proof}

From the estimates of $\sigma_4$ we can immediately get the
following
\begin{proposition}\label{prop:lowchM5}
Assume $m$ is of the form \eqref{eq:m}, then
\begin{align*}
|M_5(\xi_1,\ldots,\xi_5)|\les
\left[\frac{m^2(N_{*45})N_{45}}{(N+N_1)^2(N+N_2)^2(N+N_3)^3(N+N_{45})}\right]_{sym},
\end{align*}
where
\[N_{*45}=\min(N_1,N_2,N_3,N_{45},N_{12},N_{13},N_{23}).\]
\end{proposition}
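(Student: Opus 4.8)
The plan is to read the bound for $M_5$ directly off the pointwise estimate for $\sigma_4$ just proved, since by construction
\[M_5(\xi_1,\ldots,\xi_5)=-2i[\sigma_4(\xi_1,\xi_2,\xi_3,\xi_4+\xi_5)(\xi_4+\xi_5)]_{sym},\]
and $|M_4|/|h_4-v_4|=|\sigma_4|$. The first step is to pull the absolute value inside the symmetrization via $|[f]_{sym}|\le[|f|]_{sym}$ together with the monotonicity of $[\cdot]_{sym}$ on nonnegative functions. This reduces the proposition to the single un-symmetrized pointwise bound
\[|\sigma_4(\xi_1,\xi_2,\xi_3,\xi_4+\xi_5)|\cdot|\xi_4+\xi_5|\les \frac{m^2(N_{*45})\,N_{45}}{(N+N_1)^2(N+N_2)^2(N+N_3)^3(N+N_{45})},\]
after which applying $[\cdot]_{sym}$ to both sides yields the claim.

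To obtain this bound I would apply the preceding proposition to $\sigma_4$ with its four arguments taken to be $\xi_1,\xi_2,\xi_3$ and $\xi_4+\xi_5$. Two observations make the substitution clean. First, $|\xi_4+\xi_5|\sim N_{45}$ supplies the factor $N_{45}$ in the numerator. Second, on the hyperplane $\xi_1+\cdots+\xi_5=0$ these four arguments again sum to zero, so their six pairwise sums collapse to $\{N_{12},N_{13},N_{23}\}$, each occurring twice: indeed $\xi_1+(\xi_4+\xi_5)=-(\xi_2+\xi_3)$, and similarly for the others. Hence the quantity $\min(N_i,N_{jk})$ appearing in the $\sigma_4$ estimate is precisely $\min(N_1,N_2,N_3,N_{45},N_{12},N_{13},N_{23})=N_{*45}$, producing the factor $m^2(N_{*45})$.

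It then remains to match the denominators, which is the only genuine bookkeeping and the step I would be most careful with, since the exponent pattern $(2,2,3,1)$ is not monotone in frequency size. The $\sigma_4$ estimate is symmetric in its four arguments and assigns $(2,2,3,1)$ to $\{N_1,N_2,N_3,N_{45}\}$ in decreasing order, whereas the target denominator puts exponent $3$ on $N_3$ and exponent $1$ on $N_{45}$. Taking $N_1\ge N_2\ge N_3$ among the three unpaired frequencies (harmless after symmetrization), a short case check over the four possible placements of $N_{45}$ in the ordering shows the decreasing-rank denominator always dominates the target denominator up to a constant: for instance when $N_2\ge N_{45}\ge N_3$ the ratio of the two is $N_{45}^2/N_3^2\ge 1$, and the placements $N_{45}\ge N_1$, $N_1\ge N_{45}\ge N_2$, and $N_{45}\le N_3$ are handled identically (the last giving ratio exactly $1$). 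Consequently the rearranged bound implies the displayed target bound, and applying $[\cdot]_{sym}$ completes the proof.
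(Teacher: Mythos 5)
Your proof is correct and follows exactly the route the paper intends (the paper states the proposition "follows immediately" from the $\sigma_4$ estimate and gives no details): substitute the four arguments $\xi_1,\xi_2,\xi_3,\xi_4+\xi_5$ into the pointwise bound for $\sigma_4$, note that on the hyperplane their pairwise sums reduce to $\pm(\xi_i+\xi_j)$ so that the minimum becomes $N_{*45}$, and pass the bound through the symmetrization. Your explicit verification that the decreasing-rank denominator $(2,2,3,1)$ dominates the stated denominator in all four placements of $N_{45}$ is the one genuinely non-trivial bookkeeping step, and it checks out.
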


\section{G.W.P. of fifth order KdV on $\R$}

In this section we extend the local solution to a global one. We
will rely on a variant well-posedness result which can be proved
similarly as for the Theorem \ref{thm:lwp}.
\begin{proposition}\label{prop:lowchvlwp}
Let $-7/4\leq s\leq 0$. Assume $\phi$ satisfy
$\norm{I\phi}_{L^2(\R)}\leq 2\epsilon_0\ll 1$. Then equation
\eqref{scaledkawa} has a unique solution on $[-1,1]$ such that
\begin{eqnarray}
\norm{Iu}_{\bar{F}^0(1)}\leq C\epsilon_0,
\end{eqnarray}
where $C$ is independent of $N$ and $0<\lambda\leq 1$.
\end{proposition}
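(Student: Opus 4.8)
The plan is to repeat the contraction-mapping proof of Theorem \ref{thm:lwp}, but to solve for the new unknown $w=Iu$ directly in $\bar{F}^0$. Applying the operator $I$ to \eqref{scaledkawa} and using $uu_x=\half{1}\partial_x(u^2)$, we see that $w=Iu$ solves the integral equation (after the usual time truncation)
\begin{align*}
w=\psi(t)W(t)I\phi-\half{1}IB(u,u),
\end{align*}
where $B$ is the bilinear operator introduced before Proposition \ref{propbilinearbd}. Because the data obeys $\norm{I\phi}_{L^2}\leq 2\epsilon_0\ll1$, a standard fixed-point argument in a small ball of $\bar{F}^0$ will produce a unique solution with $\norm{Iu}_{\bar{F}^0(1)}\leq C\epsilon_0$, once two ingredients are in hand: the linear bound $\norm{\psi(t)W(t)I\phi}_{\bar{F}^0}\les\norm{I\phi}_{L^2}$, which is exactly Proposition \ref{proplineares}(a) at $s=0$ applied to $I\phi$, and the modified bilinear estimate
\begin{align}\label{eq:Ibilvar}
\norm{IB(u,v)}_{\bar{F}^0}\les\norm{Iu}_{\bar{F}^0}\norm{Iv}_{\bar{F}^0}.
\end{align}

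Everything therefore reduces to \eqref{eq:Ibilvar}. Setting $a=Iu$ and $b=Iv$, the operator $IB(I^{-1}a,I^{-1}b)$ carries exactly the kernel of $B$ weighted by the extra symbol $m(\xi_1+\xi_2)/(m(\xi_1)m(\xi_2))$ on the frequency hyperplane $\xi=\xi_1+\xi_2$. I would Littlewood--Paley decompose as in Section 2 and re-run the dyadic estimates of Propositions \ref{prophighlow}, \ref{prophhh}, \ref{proplowlow}, \ref{prophighhigh} and \ref{propXbares} carrying this weight, then resum. The key arithmetic fact is that $m$ is nonincreasing in $|\xi|$, equals $1$ for $|\xi|<N$, and satisfies $m(\xi)\sim(N/|\xi|)^{|s|}$ for $|\xi|\gg N$ with $|s|\leq7/4$. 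In the low--low regime ($|\xi_1|,|\xi_2|\les1$) the weight is $\equiv1$ and \eqref{eq:Ibilvar} is Proposition \ref{proplowlow} verbatim. In the high--low regime one has $|\xi_1+\xi_2|\sim\max(|\xi_1|,|\xi_2|)$, so the weight is $\les(2^{k_1}/N)^{|s|}$ at the low input frequency $2^{k_1}$, which is absorbed by the gain of Proposition \ref{prophighlow}(b) together with the $\ell^2$ summation built into the $\bar{F}^0$ norm; the case of three comparable large frequencies is likewise fine, since the weight $(2^k/N)^{|s|}$ is beaten by the genuine decay $2^{-9k/4}$ of Proposition \ref{prophhh}.

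The genuinely dangerous configuration---and the one I expect to be the main obstacle---is the high--high interaction, where $|\xi_1|\sim|\xi_2|=:2^k\gg N$ and the output frequency $2^{k_1}$ is much smaller. Here $m(\xi_1),m(\xi_2)\sim(N/2^k)^{|s|}$ while $m(\xi_1+\xi_2)\les1$, so the weight is as large as $(2^k/N)^{2|s|}\les(2^k/N)^{7/2}$. This is precisely compensated by the strong dispersive smoothing $2^{-7k/2}$ of Proposition \ref{prophighhigh} (respectively Proposition \ref{propXbares} when the output falls in the $\bar{X}_0$ low-frequency block): the product of loss and gain is $\les(2^k/N)^{7/2}2^{-7k/2}=N^{-7/2}$, which is bounded uniformly in the dyadic frequency $2^k$. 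The point to stress is that one does not need frequency-summable decay here, but only this uniform per-block bound, after which a Cauchy--Schwarz step against the $\ell^2$ input sequences carried by $\norm{Iu}_{\bar{F}^0},\norm{Iv}_{\bar{F}^0}$ closes \eqref{eq:Ibilvar} with constant $\les N^{-7/2}\les1$ independent of $N$. It is exactly this balance that makes the endpoint $|s|=7/4$ affordable; any borderline logarithmic factor arising at $k_1=0$ is controlled, as in the proof of Theorem \ref{thm:lwp}, by the stronger $\bar{X}_0$ structure of Proposition \ref{propXbares}. Finally, the uniformity in $\lambda$ is automatic, since $\dr=\mu\xi^3-\xi^5\in D_{hi}(5)$ uniformly for $|\mu|\leq1$ and all dyadic estimates invoked carry $\mu$-independent constants.
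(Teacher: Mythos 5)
Your proposal is correct and is essentially what the paper intends: the paper gives no proof of this proposition at all, saying only that it ``can be proved similarly as for Theorem \ref{thm:lwp}'', and the standard way to make that precise is exactly your route --- rerun the dyadic bilinear estimates of Section 2 for the weighted symbol $m(\xi_1+\xi_2)/(m(\xi_1)m(\xi_2))$ and check that the loss is absorbed block by block. Your key arithmetic is the right one: the worst loss $(2^k/N)^{2|s|}\leq(2^k/N)^{7/2}$ occurs in the high--high interaction and is exactly cancelled by the $2^{-7k/2}$ gain of Propositions \ref{prophighhigh} and \ref{propXbares}, which is why $s=-7/4$ is the endpoint. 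One small gloss worth noting: in the high--high case a ``uniform per-block bound plus Cauchy--Schwarz on the inputs'' is not quite the whole story, since the $\bar F^0$ norm still requires an $\ell^2$ sum over the output frequency $k_1$, and the roughly $\log N$ output blocks with $2^{k_1}\les N$ each contribute a constant; this produces an extra factor of order $(\log N)^{1/2}$ (and similar harmless logarithms from the $k2^{-4k}2^{k_1/2}$ term), all of which are comfortably absorbed by the leftover $N^{-7/2}$, so the estimate still holds with a constant independent of $N$ as claimed.
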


From Proposition \ref{prop:lowchvlwp}, we see it suffices to control
the growth of $E_I^2(t)$. It is better controlling directly the
growth of $E_I^4(t)$, and then using the following proposition we
can that of $E_I^2(t)$.
\begin{proposition}\label{prop:EI2EI4}
Let $I$ be defined with the multiplier $m$ of the form \eqref{eq:m}
and $s=-7/4$. Then
\begin{equation}
|E_I^4(t)-E_I^2(t)|\les \norm{Iu(t)}_{L^2}^3+\norm{Iu(t)}_{L^2}^4.
\end{equation}
\end{proposition}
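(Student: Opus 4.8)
The plan is to exploit the construction of the modified energies directly. By definition $E_I^3=E_I^2+\Lambda_3(\sigma_3)$ and $E_I^4=E_I^3+\Lambda_4(\sigma_4)$, so that
\[E_I^4(t)-E_I^2(t)=\Lambda_3(\sigma_3)+\Lambda_4(\sigma_4),\]
and the claim reduces to the two multilinear bounds $|\Lambda_3(\sigma_3;u,u,u)|\les\norm{Iu}_{L^2}^3$ and $|\Lambda_4(\sigma_4;u,u,u,u)|\les\norm{Iu}_{L^2}^4$. In both cases I would first pass from $u$ to $Iu$ by writing $\wh{u}(\xi)=\wh{Iu}(\xi)/m(\xi)$, so that the task becomes to control the symmetric multipliers $\sigma_3/(m(\xi_1)m(\xi_2)m(\xi_3))$ and $\sigma_4/(m(\xi_1)\cdots m(\xi_4))$ paired against copies of $Iu$. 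Note that whenever all frequencies are $\les N$ one has $m\equiv 1$, whence $M_3$, and therefore $\sigma_3$ and $\sigma_4$, vanish on the relevant hyperplane; thus I may assume $N_{max}\ges N$ throughout.

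For the trilinear term I would decompose dyadically, $N_1\ges N_2\ges N_3$, recalling that on $\xi_1+\xi_2+\xi_3=0$ one has $N_1\sim N_2$. The elementary estimate to use is
\[\Big|\int_{\xi_1+\xi_2+\xi_3=0}\prod_{i=1}^3|\wh{P_{N_i}Iu}(\xi_i)|\Big|\les N_3^{1/2}\prod_{i=1}^3\norm{P_{N_i}Iu}_{L^2},\]
obtained by Plancherel together with Bernstein's inequality $\norm{P_Nf}_{L^\infty}\les N^{1/2}\norm{P_Nf}_{L^2}$ applied to the lowest frequency. Feeding in the pointwise bound \eqref{eq:m3}, namely $|\sigma_3|\les m^2(N_3)N_1^{-4}$, and using that $m$ is even and decreasing with $m(N_3)N_3^{1/2}\les N^{1/2}$ and $N_1^{-4}/m^2(N_1)\les N^{-7/2}N_1^{-1/2}$ when $s=-7/4$, each dyadic block is bounded by $N^{-3}N_1^{-1/2}\prod_i\norm{P_{N_i}Iu}_{L^2}$. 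The negative power of $N_1$ leaves room to sum over $N_3\le N_1\sim N_2$ (a Cauchy--Schwarz in $N_3$ costs at most a harmless $(\log N_1)^{1/2}$), yielding $|\Lambda_3(\sigma_3)|\les\norm{Iu}_{L^2}^3$.

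The quadrilinear term is handled by the same scheme but is the main obstacle, because now I must invoke the full pointwise bound \eqref{eq:m4} for $|M_4|/|h_4-v_4|=|\sigma_4|$, whose right-hand side involves $m^2(\min(N_i,N_{jk}))$ and a case-dependent product of the denominators $(N+N_j)$. With $N_1\ges\cdots\ges N_4$ and $N_1\sim N_2$, I would place the two largest frequencies in $L^2$ and the two smallest in $L^\infty$, paying $N_3^{1/2}N_4^{1/2}$ by Bernstein, and then verify in each configuration that
\[\frac{m^2(\min(N_i,N_{jk}))}{(N+N_1)^2(N+N_2)^2(N+N_3)^3(N+N_4)\,m(\xi_1)\cdots m(\xi_4)}\,N_3^{1/2}N_4^{1/2}\les N_1^{-1/2}\]
up to negative powers of $N$. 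The point, as in the trilinear case, is that the factor $1/m^2(N_1)\sim(N_1/N)^{7/2}$ is always defeated by the $N_1^{-4}$ coming from $(N+N_1)^2(N+N_2)^2$, while the numerator $m^2(\min)$ exactly compensates the remaining $1/m$'s attached to the low frequencies. The only genuinely delicate subcase is when $\min(N_i,N_{jk})$ is realized by a small pair sum, so that two high frequencies nearly cancel; but there $N_3\sim N_4$, and a direct computation again produces a net negative power of $N_1$. Summing the resulting $N_1^{-1/2}\prod_i\norm{P_{N_i}Iu}_{L^2}$ over the dyadic frequencies, with $N_1\sim N_2$ and $N_3,N_4\le N_1$, gives $|\Lambda_4(\sigma_4)|\les\norm{Iu}_{L^2}^4$, completing the proof.
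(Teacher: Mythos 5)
Your proposal is correct and follows essentially the same route as the paper: reduce to the bounds $|\Lambda_3(\sigma_3)|\les\norm{Iu}_{L^2}^3$ and $|\Lambda_4(\sigma_4)|\les\norm{Iu}_{L^2}^4$, renormalize by $1/m$, decompose dyadically, and combine the pointwise multiplier bounds \eqref{eq:m3} and \eqref{eq:m4} with H\"older, Bernstein and the constraint $N_1\sim N_2\ges N$. The only cosmetic difference is in the trilinear term, where the paper uses an $L^3\times L^3\times L^3$ H\"older (paying $N_i^{1/6}$ on each factor) and treats $N_3\ll N$ versus $N_3\ges N$ via the explicit formula for $\sigma_3$, while you place the lowest frequency in $L^\infty$ and quote \eqref{eq:m3} directly; both yield the same summable gain.
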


\begin{proof}
Since $E_I^4(t)=E_I^2(t)+\Lambda_3(\sigma_3)+\Lambda_4(\sigma_4)$,
then it suffices to show
\begin{eqnarray}
|\Lambda_3(\sigma_3;u_1,u_2,u_3)|&\les&
\prod_{i=1}^3\norm{Iu_i}_{L^2},\label{eq:endiff3linear}\\
|\Lambda_4(\sigma_4;u_1,u_2,u_3,u_4)|&\les&
\prod_{i=1}^4\norm{Iu_i}_{L^2}.\label{eq:endiff4linear}
\end{eqnarray}
We may assume that $\wh{u_i}$ are non-negative. To prove
\eqref{eq:endiff3linear}, it suffices to prove
\begin{align}\label{eq:lowchE3linearprf1}
\left|{\Lambda_3\brk{\frac{m^2(\xi_1)\xi_1+m^2(\xi_2)\xi_2+m^2(\xi_3)\xi_3}{\xi_1\xi_2\xi_3(\xi_1^2+\xi_2^2+\xi_3^2)m(\xi_1)m(\xi_2)m(\xi_3)};u_1,u_2,u_3}}\right|\les
\prod_{i=1}^3\norm{u_i}_2.
\end{align}
By Littlewood-Paley decomposition, we get the left-hand side of
\eqref{eq:lowchE3linearprf1} is bounded by
\begin{align}\label{eq:lowchE3linearprf2a}
\sum_{k_i\geq
0}\left|{\Lambda_3\brk{\frac{m^2(\xi_1)\xi_1+m^2(\xi_2)\xi_2+m^2(\xi_3)\xi_3}{\xi_1\xi_2\xi_3(\xi_1^2+\xi_2^2+\xi_3^2)m(\xi_1)m(\xi_2)m(\xi_3)};P_{k_1}u_1,P_{k_2}u_2,P_{k_3}u_3}}\right|.
\end{align}
Let $N_i=2^{k_i}$. From symmetry we may assume $N_1\geq N_2 \geq
N_3$, and hence $N_1\sim N_2\ges N$.

Case 1. $N_3\ll N$.

In this case $m(N_3)=1$, then we get
\begin{align*}
\eqref{eq:lowchE3linearprf2a}\les& \sum_{k_i\geq
0}\left|{\Lambda_3\brk{\frac{N^sN^s}{N_1^{1+s}N_1^{1+s}N_1^{2}};P_{k_1}u_1,P_{k_2}u_2,P_{k_3}u_3}}\right|\\
\les& \sum_{k_i\geq
0}\left|{\Lambda_3\brk{N_1^{-1/4}N_2^{-1/4};P_{k_1}u_1,P_{k_2}u_2,P_{k_3}u_3}}\right|.
\end{align*}
It suffices to prove
\begin{align*}
\sum_{k_i\geq 0}\int_{\xi_1+\xi_2+\xi_3=0,|\xi_i|\sim
N_i}N_1^{-1/2}\prod_{i=1}^3\eta_{k_i}(\xi_i)\wh{u_i}(\xi_i)\les
\prod_{i=1}^3\norm{u_i}_{L^2}.
\end{align*}
Define $v_i(x)$ as following:
\[\wh{v_i}(\xi)=N_i^{-1/6}\wh{u_i}(\xi)\chi_{\{|\xi|\sim N_i\}}(\xi).\]
By Sobolev embedding inequality we have $\norm{v_i}_{L^3}\les
\norm{u_i}_{L^2}$, thus using H\"older's inequality we get
\begin{align*}
&\sum_{k_i\geq 0}\int_{\xi_1+\xi_2+\xi_3=0,|\xi_i|\sim
N_i}N_1^{-1/2}\prod_{i=1}^3\eta_{k_i}(\xi_i)\wh{u_i}(\xi_i)\\
&\les\sum_{k_i\geq
0}N_1^{-1/6}N_3^{1/6}\prod_{i=1}^3\norm{v_i}_{L^3}\les\prod_{i=1}^3\norm{u_i}_{L^2}.
\end{align*}

Case 2. $N_3\ges N$. It is obvious that
\begin{align*}
\eqref{eq:lowchE3linearprf2a}\les \sum_{k_i\geq
0}\left|{\Lambda_3\brk{\frac{N_3^{-7/4}N^{-7/4}}{N_1^{1/2}};P_{k_1}u_1,P_{k_2}u_2,P_{k_3}u_3}}\right|\les
\prod_{i=1}^3\norm{u_i}_{L^2}.
\end{align*}
Thus we get \eqref{eq:endiff3linear}.

Next we show \eqref{eq:endiff4linear}. It suffices to prove
\begin{eqnarray}\label{eq:lowchE4linearprf1}
\left|{\Lambda_4\brk{\frac{\sigma_4}{m(\xi_1)m(\xi_2)m(\xi_3)m(\xi_4)};u_1,u_2,u_3,u_4}}\right|\les
\prod_{i=1}^4\norm{u_i}_2.
\end{eqnarray}
By Littlewood-Paley decomposition we get the left-hand side of
\eqref{eq:lowchE4linearprf1} is dominated by
\begin{eqnarray}\label{eq:lowchE3linearprf2}
\sum_{k_i\geq
0}\left|{\Lambda_4\brk{\frac{\sigma_4}{m(\xi_1)m(\xi_2)m(\xi_3)m(\xi_4)};P_{k_1}u_1,P_{k_2}u_2,P_{k_3}u_3,P_{k_4}u_4}}\right|.
\end{eqnarray}
Let $N_i=2^{k_i}$. From symmetry we may assume $N_1\geq N_2 \geq
N_3\geq N_4$, hence we may assume $N_1\sim N_2\ges N$. Since
\[\left|\frac{\sigma_4}{m(\xi_1)m(\xi_2)m(\xi_3)m(\xi_4)}\right|\les \frac{1}{\prod_{i=1}^4(N+N_i)^2m(N_i)}\les \frac{N^{-7}}{\prod_{i=1}^4N_i^{1/4}}.\]
using H\"older's inequality we get
\begin{align*}
\eqref{eq:lowchE3linearprf2}\les& \sum_{k_i\geq
0}\frac{N^{-7}}{\prod_{i=1}^4N_i^{1/4}}\norm{P_{k_1}u_1}_{L^2}\norm{P_{k_2}u_2}_{L^2}\norm{P_{k_3}u_3}_{L^\infty}\norm{P_{k_4}u_4}_{L^\infty}\\
\les&\prod_{i=1}^4\norm{u_i}_2.
\end{align*}
Therefore, we complete the proof of the proposition.
\end{proof}

Since $E_I^2(t)$ is very close to $E_I^4(t)$, then we will control
$E_I^4(t)$ and hence control $E_I^2(t)$. In order to control the
increase of $E_I^4(t)$, we need to control its derivative
\[\frac{d}{dt}E_I^4(t)=\Lambda_5(M_5),\]
where
\[M_5(\xi_1,\ldots,\xi_5)=-2i[\sigma_4(\xi_1,\xi_2,\xi_3,\xi_4+\xi_5)(\xi_4+\xi_5)]_{sym}.\]

\begin{proposition}\label{prop:5linear}
Assume $I\subset \R$ with $|I|\les 1$. Let $0\leq k_1\leq k_2\leq
k_3\leq k_4 \leq k_5$ and $k_4\geq 10$. Then we have
\begin{align}\label{eq:5linear1}
&\left|\int_I\int
P_{k_{12}}\big(P_{k_1}(w_1)P_{k_2}(w_2)\big)\prod_{i=3}^5
P_{k_i}(w_i)(x,t)dxdt\right|\nonumber\\
&\les
2^{\frac{5}{4}k_1}2^{k_2/4}2^{k_3/4}2^{-2k_4}2^{-2k_5}\prod_{j=1}^5\norm{\wh{P_{k_j}(w_j)}}_{X_{k_j}},
\end{align}
where if $k_j=0$ then $X_{k_j}$ is replaced by $\bar{X}_{k_j}$ on
the right-hand side.
\end{proposition}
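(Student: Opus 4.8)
The plan is to bound the five-linear form directly by H\"older's inequality in the space-time variables, assigning to each factor a mixed Lebesgue norm whose dyadic weight matches the corresponding power of $2$ on the right-hand side of \eqref{eq:5linear1}, and then to invoke the embeddings of Lemma \ref{Xkembedding}. The numerology is essentially forced: the weight $2^{5k_1/4}$ can only come from the $L_x^2L_t^\infty$ bound, the two weights $2^{k/4}$ from the $L_x^4L_t^\infty$ bound, and the two decaying weights $2^{-2k}$ from the $L_x^\infty L_t^2$ bound. Since $L_x^2L_t^\infty$, $L_x^4L_t^\infty$ and $L_x^\infty L_t^2$ carry spatial H\"older exponents $\tfrac12,\tfrac14,0$ and temporal exponents $0,0,\tfrac12$, the assignment
\[
P_{k_1}w_1\in L_x^2L_t^\infty,\quad P_{k_2}w_2,\ P_{k_3}w_3\in L_x^4L_t^\infty,\quad P_{k_4}w_4,\ P_{k_5}w_5\in L_x^\infty L_t^2
\]
has total spatial exponent $\tfrac12+\tfrac14+\tfrac14=1$ and total temporal exponent $\tfrac12+\tfrac12=1$, so H\"older closes. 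It is essential to place the inner norm in $t$ and the outer norm in $x$, matching the form of Lemma \ref{Xkembedding}: one first applies H\"older in $t$ over $I$ with exponents $(\infty,\infty,2,2)$ for fixed $x$, and then H\"older in $x$ over $\R$ with exponents $(\tfrac43,4,\infty,\infty)$. All resulting time norms are taken over $I$ and are therefore dominated by the global-in-time bounds of Lemma \ref{Xkembedding}.

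To handle the projection in the grouped factor, set $F_1=P_{k_{12}}(P_{k_1}w_1\,P_{k_2}w_2)$ and place $F_1$ in the space $L_x^{4/3}L_t^\infty$ left free by the H\"older budget above ($\tfrac1{4/3}+\tfrac14=1$ in $x$, $L_t^\infty$ in $t$). Since $P_{k_{12}}$ is a spatial Fourier multiplier whose convolution kernel $\ft^{-1}\eta_{k_{12}}$ has $L^1(\R)$ norm bounded uniformly in $k_{12}$, it is bounded on $L_x^{4/3}L_t^\infty$ uniformly, whence
\[
\norm{F_1}_{L_x^{4/3}L_t^\infty}\les\norm{P_{k_1}w_1\,P_{k_2}w_2}_{L_x^{4/3}L_t^\infty}\les\norm{P_{k_1}w_1}_{L_x^2L_t^\infty}\norm{P_{k_2}w_2}_{L_x^4L_t^\infty}.
\]
Applying the second estimate of Lemma \ref{Xkembedding} to $P_{k_1}w_1$ (using $P_{\le k_1}P_{k_1}=P_{k_1}$), the third estimate to $P_{k_2}w_2$ and $P_{k_3}w_3$, and the fourth estimate to $P_{k_4}w_4$ and $P_{k_5}w_5$ (legitimate since $k_4,k_5\ge10$) produces exactly $2^{5k_1/4}2^{k_2/4}2^{k_3/4}2^{-2k_4}2^{-2k_5}\prod_{j=1}^5\norm{\wh{P_{k_j}w_j}}_{X_{k_j}}$.

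For a factor with $k_j=0$ one replaces $X_{k_j}$ by $\bar{X}_{k_j}$: the mixed norms $L_x^2L_t^\infty$, $L_x^4L_t^\infty$ and $L_x^\infty L_t^2$ are all controlled by $\norm{P_{\le0}w_j}_{L_x^2L_{|t|\le T}^\infty}$ through \eqref{eq:Xbar0Lqr}, and the attached weights are $O(1)$. For intermediate frequencies $1\le k_j<10$, which can occur only among $k_1,k_2,k_3$, the embeddings of Lemma \ref{Xkembedding} hold with $O(1)$ constants and are harmlessly absorbed, since the corresponding prefactors are themselves $O(1)$ there. The argument is thus routine once the distribution is fixed; the one genuinely forced choice is to spend the two decaying $L_x^\infty L_t^2$ bounds on the two largest frequencies $k_4,k_5$ and the growing $L_x^2L_t^\infty$ bound on the smallest frequency $k_1$. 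This is precisely what the monotonicity hypothesis $k_1\le\cdots\le k_5$ renders optimal, and it is the step that most needs care in matching the exponents.
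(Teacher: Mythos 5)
Your proof is correct and follows essentially the same route as the paper's: a single application of H\"older with the assignment $L_x^2L_{t\in I}^\infty$, $L_x^4L_{t\in I}^\infty$, $L_x^4L_{t\in I}^\infty$, $L_x^\infty L_t^2$, $L_x^\infty L_t^2$ to the five factors, followed by Lemma \ref{Xkembedding}. You are in fact somewhat more careful than the paper, which silently drops the outer projection $P_{k_{12}}$ and does not comment on the low- and intermediate-frequency factors; your handling of both points is fine.
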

\begin{proof}
From H\"older's inequality the left-hand side of \eqref{eq:5linear1}
is dominated by
\[\norm{P_{k_1}(w_1)}_{L_x^2L_{t\in I}^\infty}\norm{P_{k_2}(w_2)}_{L_x^4L_{t\in I}^\infty}\norm{P_{k_2}(w_2)}_{L_x^4L_{t\in I}^\infty}
\norm{P_{k_4}(w_4)}_{L_x^\infty
L_{t}^2}\norm{P_{k_5}(w_5)}_{L_x^\infty L_{t}^2}.\] Then the
proposition follows immediately from Proposition \ref{Xkembedding}.
\end{proof}

\begin{proposition}\label{prop:inEI4}
Let $\delta\les 1$. Assume $m$ is of the form \eqref{eq:m} with
$s=-7/4$, then
\begin{eqnarray}
\left|\int_0^\delta \Lambda_5(M_5)dt\right|\les
N^{-\frac{35}{4}}\norm{Iu}^5_{\bar F^{0}(\delta)}.
\end{eqnarray}
\end{proposition}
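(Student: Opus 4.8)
The plan is to bound the quintilinear functional $\Lambda_5(M_5)$ by combining the pointwise multiplier bound from Proposition \ref{prop:lowchM5} with the $5$-linear estimate of Proposition \ref{prop:5linear}. First I would expand $\int_0^\delta \Lambda_5(M_5)\,dt$ via Littlewood--Paley decomposition, writing $\Lambda_5(M_5)=\sum \Lambda_5(M_5; P_{k_1}u,\ldots,P_{k_5}u)$ over dyadic frequencies $N_i=2^{k_i}$. By the symmetry of $M_5$ I may order the frequencies $k_1\leq k_2\leq k_3\leq k_4\leq k_5$. Since $M_5$ is supported on the hyperplane $\xi_1+\cdots+\xi_5=0$, the two largest frequencies must be comparable, so $N_4\sim N_5\ges N$; otherwise $m^2$ is identically $1$ on the relevant pieces and the multiplier would not force any smallness. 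The key structural point is that Proposition \ref{prop:lowchM5} gives the pointwise bound $|M_5|\les [m^2(N_{*45})N_{45}/((N+N_1)^2(N+N_2)^2(N+N_3)^3(N+N_{45}))]_{sym}$, which I will match against the gain $2^{-2k_4-2k_5}$ available in \eqref{eq:5linear1}.

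The main step is then to insert the pointwise bound for $M_5$ and apply Proposition \ref{prop:5linear}. Writing $Iu$ in place of $u$ (so that each factor carries the multiplier $m$), the functional becomes an integral of the form appearing on the left-hand side of \eqref{eq:5linear1}, where the $P_{k_{12}}$ grouping arises naturally because $M_5$ is built from $\sigma_4(\xi_1,\xi_2,\xi_3,\xi_4+\xi_5)(\xi_4+\xi_5)$ and thus pairs two of the frequencies together. Combining the multiplier size with the gain $2^{\frac54 k_1}2^{k_2/4}2^{k_3/4}2^{-2k_4}2^{-2k_5}$ from Proposition \ref{prop:5linear}, I expect each dyadic block to be bounded by a product $N^{-35/4}$ times a quantity summable in the $k_i$, multiplied by $\prod_j \norm{\wh{P_{k_j}(Iu)}}_{X_{k_j}}$. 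Summing the geometric series over the dyadic parameters and using Cauchy--Schwarz in $k$ to pass from the product of dyadic norms to $\norm{Iu}_{\bar F^0(\delta)}^5$ then yields the claimed bound.

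The hard part will be the bookkeeping in the case analysis needed to verify that the net power of $N$ is exactly $N^{-35/4}$ and that every resulting sum over dyadic scales converges. Because $M_5$ is symmetrized and $N_{45}$ may be either large or small relative to $N_1,N_2,N_3$, one must track which of the arguments realizes the minimum in $N_{*45}$ and whether $N_{45}\gg N$ or $N_{45}\les N$; in the regimes where $N_{45}$ is small there is a delicate interplay between the factor $m^2(N_{*45})N_{45}$ in the numerator and the weight $(N+N_{45})$ in the denominator. The decay $2^{-2k_4}2^{-2k_5}$ from the two $L_x^\infty L_t^2$ factors in Proposition \ref{prop:5linear} is the crucial ingredient that absorbs the high frequencies $N_4\sim N_5$ and produces enough negative powers of $N$; I would organize the proof so that, after substituting $s=-7/4$ into $m$, the worst case (all of $N_1,\ldots,N_5\sim N$) saturates the bound and every other configuration gains an additional positive power of $2^{-\epsilon k_{\max}}$ that guarantees summability. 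The remaining manipulations are routine, so I would state the worst-case computation in detail and indicate that the other cases only improve it.
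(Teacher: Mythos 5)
Your plan is essentially the paper's own proof: after Littlewood--Paley decomposition one divides out the five factors of $m$ (so the multiplier becomes $\sigma_4(\xi_1,\xi_2,\xi_3,\xi_4+\xi_5)(\xi_4+\xi_5)/\prod_i m(\xi_i)$ acting on $Iu$), inserts the pointwise bound \eqref{eq:m4}/Proposition \ref{prop:lowchM5} together with $\frac{1}{(N+N_i)^2m(N_i)}\les N^{-7/4}N_i^{-1/4}$, and then applies Proposition \ref{prop:5linear}, with the case analysis driven by the relative sizes of $N_{45}$ and $N_3$ and by which frequencies exceed $N$. The bookkeeping you defer is exactly what the paper's proof consists of, so the approach matches and is correct.
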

\begin{proof}
By the definitions, it suffices to prove that
\begin{align*}
\left|\int_0^\delta
\Lambda_5\brk{\frac{\sigma_4(\xi_1,\xi_2,\xi_3,\xi_4+\xi_5)(\xi_4+\xi_5)}{m(\xi_1)m(\xi_2)m(\xi_3)m(\xi_4)m(\xi_5)}}dt\right|\les
N^{-\frac{35}{4}}\norm{u}^5_{\bar F^{0}(\delta)}.
\end{align*}
By the Littlewood-Paley decomposition $u=\sum_{k\geq 0}P_ku$, it
suffices to prove
\begin{eqnarray*}
&&\sum_{N_1,\cdots, N_5,N_{45}\geq 0}\left|\int_0^\delta
\Lambda_5\left(\frac{\sigma_4(\xi_1,\xi_2,\xi_3,\xi_4+\xi_5)(\xi_4+\xi_5)}{m(\xi_1)m(\xi_2)m(\xi_3)m(\xi_4)m(\xi_5)};P_{k_1}u,\cdots,P_{k_5}u\right)dt\right|\\
&&\les N^{-\frac{35}{4}}\norm{u}^5_{\bar{F}^0(\delta)},
\end{eqnarray*}
where $N_i=2^{k_i}$ and $|\xi_4+\xi_5|\sim N_{45}$ for $N_{45}$
dyadic. From symmetry we may assume $N_1\geq N_2\geq N_3$ and
$N_4\geq N_5$ and two of the $N_i\ges N$. We fix the extension
$\wt{u}_i$ such that $\norm{\wt{u}_i}_{\bar{F}^0}\les
2\norm{{u}_i}_{\bar{F}^0(\delta)}$. For simplicity, we still denote
$u_i$.

{\bf Case 1.} $N_1\sim N_2\ges N$ and $N_4\les N_2$.

{\bf Case 1a.} $N_{45}\ges N_3$.

 From the form
\eqref{eq:m} with $s=-7/4$ we get that
$\frac{1}{(N+N_i)^2m(N_i)}\les N^{-7/4}{N_i}^{-1/4}$ and
$\frac{1}{m(N_3)m(N_4)m(N_5)}\les
N^{-21/4}N_3^{7/4}N_4^{7/4}N_5^{7/4}$. Thus we have
\[\left|\frac{\sigma_4(\xi_1,\xi_2,\xi_3,\xi_4+\xi_5)(\xi_4+\xi_5)}
{m(\xi_1)m(\xi_2)m(\xi_3)m(\xi_4)m(\xi_5)}\right|\les
\frac{N^{-35/4}N_1^{-1/2}N_3^{7/4}N_4^{7/4}N_5^{7/4}m^2(\min(N_i,N_{jk}))N_{45}}{(N+N_3)(N+N_{45})^3}\]
Therefore in this case we need to control
\begin{align}\label{eq:deriE4}
N^{-\frac{35}{4}}\sum_{N_i,N_{45}}\int_0^\delta \Lambda_5\left(
\frac{N_1^{-1/2}N_3^{7/4}N_4^{7/4}N_5^{7/4}N_{45}}{(N+N_3)(N+N_{45})^3};P_{k_1}u,\cdots,P_{k_5}u\right)dt.
\end{align}
We consider the worst case $N_1\geq N_2\geq N_4\geq N_5\geq N_3$.
From \eqref{eq:5linear1} we get
\begin{align*}
\eqref{eq:deriE4}\les&N^{-\frac{35}{4}}\sum_{N_i}
\frac{N_1^{-1/2}N_3^{7/4}N_4^{7/4}N_5^{7/4}N_{45}}{(N+N_3)(N+N_{45})^3}N_1^{-4}N_3^{5/4}N_4^{1/4}N_5^{1/4}\prod_{i=1}^5
\norm{\wh{P_{k_i}u}}_{X_{k_i}}\\
\les& N^{-\frac{35}{4}}\norm{u}^5_{\bar F^{0}(\delta)}.
\end{align*}

{\bf Case 1b.} $N_{45}\ll N_3$.

We have
\[\left|\frac{\sigma_4(\xi_1,\xi_2,\xi_3,\xi_4+\xi_5)(\xi_4+\xi_5)}
{m(\xi_1)m(\xi_2)m(\xi_3)m(\xi_4)m(\xi_5)}\right|\les
\frac{N^{-35/4}N_1^{-1/2}N_3^{7/4}N_4^{7/4}N_5^{7/4}m^2(\min(N_i,N_{jk}))N_{45}}{(N+N_{45})(N+N_{3})^3}\]
Therefore in this case we need to control
\begin{align}\label{eq:5linear1b}
N^{-\frac{35}{4}}\sum_{N_i,N_{45}}\int_0^\delta \Lambda_5\left(
\frac{N_1^{-1/2}N_3^{7/4}N_4^{7/4}N_5^{7/4}}{(N+N_{3})^3};P_{k_1}u,\cdots,P_{k_5}u\right)dt.
\end{align}
We get from Proposition \ref{prop:5linear} that (still consider the
worst case $N_1\geq N_2\geq N_4\geq N_5\geq N_3$)
\[\eqref{eq:5linear1b}\les N^{-\frac{35}{4}}\sum_{N_i}\frac{N_1^{-1/2}N_3^{7/4}N_4^{7/4}N_5^{7/4}}{(N+N_{3})^3}N_1^{-4}N_3^{5/4}N_4^{1/4}N_5^{1/4}\prod_{i=1}^5
\norm{\wh{P_{k_i}u}}_{X_{k_i}}\les N^{-\frac{35}{4}}\norm{u}^5_{\bar
F^{0}(\delta)}.\]

The rest cases $N_4\sim N_5 \ges N$, $N_1\les N_5$ or $N_1\sim N_4
\ges N$ follow in a similar ways. We omit the details.
\end{proof}

For any fixed $u_0 \in H^{-7/4}$ and time $T>0$, our goal is to
construct the solution to \eqref{kawa} on $t\in [0,T]$. If $u$ is a
solution to \eqref{kawa} with initial data $u_0$, then for any
$\lambda>0$, $u_\lambda(x,t)=\lambda^{4}u(\lambda x, \lambda^5 t)$
is a solution to \eqref{scaledkawa} with initial data
$u_{0,\lambda}=\lambda^{4}u_0(\lambda x)$. By simple calculation we
know
\[
\norm{Iu_{0,\lambda}}_{L^2}\les
\lambda^{{7/4}}N^{7/4}\norm{u_0}_{H^s}.
\]
For fixed $N$ ($N$ will be determined later), we take $\lambda\sim
N^{-1}$ such that
\begin{align*}
\lambda^{{7/4}}N^{7/4}\norm{\phi}_{H^{-7/4}}=\epsilon_0<1.
\end{align*}
For the simplicity of notations, we still denote $u_\lambda$ by $u$,
$u_{0,\lambda}$ by $u_0$, and assume $\norm{Iu_0}_{L^2}\leq
\epsilon_0$, then the goal is to construct the solution to
\eqref{scaledkawa} on $[0,\lambda^{-5} T]$. According to Proposition
\ref{prop:lowchvlwp} we get a local solution $u$ on $t\in [0,1]$,
then we need to control the modified energy
$E_I^2(t)=\norm{Iu}^2_{L^2}$.

First we see the control of $E_I^2(t)$ for $t\in [0,1]$, we will
prove that $E_I^2(t)<4\epsilon_0^2$. Using bootstrap we may assume
$E_I^2(t)<5\epsilon_0^2$, then from Proposition \ref{prop:EI2EI4} we
get
\[E_I^4(0)=E_I^2(0)+O(\epsilon_0^3)\]
and
\[E_I^4(t)=E_I^2(t)+O(\epsilon_0^3).\]
Thus from Proposition \ref{prop:inEI4} we get for $t\in [0,1]$
\[E_I^4(t)\leq E_I^4(0)+C\epsilon_0^5N^{-35/4}.\]
Therefore
\begin{align*}
\norm{Iu(1)}_{L^2}^2=E_I^4(1)+O(\epsilon_0^3)\leq&
E_I^4(0)+C\epsilon_0^5N^{-35/4}+O(\epsilon_0^3)\\
=&\epsilon_0^2+C\epsilon_0^5N^{-35/4}+O(\epsilon_0^3)<4\epsilon_0^2.
\end{align*}
Thus $u$ can be extended to $t\in [0,2]$. Extending as this
$M$-steps, we get for $t\in [0,M+1]$
\begin{align*}
E_I^4(t)\leq E_I^4(0)+CM\epsilon_0^5N^{-35/4}.
\end{align*}
Thus as long as $MN^{-35/4}\les 1$, then we have
\[E_I^2(M)=E_I^4(M)+O(\epsilon_0^3)=\epsilon_0^2+O(\epsilon_0^3)+CM\epsilon_0^5N^{-35/4}<4\epsilon_0^2,\]
Therefore, the solution can be extended to $t\in [0,N^{35/4}]$.
Taking $N(T)$ sufficiently large such that
\[N^{35/4}>\lambda^{-5} T\sim N^{5}T.\]
Thus $u$ is extended to $[0,\lambda^{-5} T]$, then for the original
equation \eqref{kawa}, using the scaling, we prove its solution
extends to $[0,T]$.

At the end of this section, we see what we know about the global
solution. Using the scaling we get
\begin{align*}
\sup_{t\in [0,T]}\norm{u(t)}_{H^{-7/4}}\sim&
\lambda^{{-7/4}}\sup_{t\in
[0,\lambda^{-5}T]}\norm{u_\lambda(t)}_{H^{-7/4}}\leq
\lambda^{-{7/4}}\sup_{t\in
[0,\lambda^{-5}T]}\norm{Iu_\lambda(t)}_{L^2},\\
&\norm{I\phi_\lambda}_{L^2}\les
N^{7/4}\norm{\phi_\lambda}_{H^{-7/4}}\sim
N^{7/4}\lambda^{{7/4}}\norm{\phi}_{H^{-7/4}}.
\end{align*}
From the previous proof we know
\[\sup_{t\in [0,\lambda^{-5} T]}\norm{Iu_\lambda(t)}_{L^2}\les \norm{I\phi_\lambda}_{L^2},\]
thus we get
\[\sup_{t\in [0,T]}\norm{u(t)}_{H^{-7/4}}\les N^{{7/4}}\norm{\phi}_{H^{-7/4}}.\]
Take $\lambda$ such that $\norm{I\phi_\lambda}_{L^2}\sim \epsilon_0
\ll 1$, then
\[\lambda=\lambda(N,\epsilon_0,\norm{\phi}_{H^{-7/4}})\sim
\brk{\frac{\norm{\phi}_{H^{-7/4}}}{\epsilon_0}}^{-4/7}N^{-1}.\] We
will choose $N$ such that $N^{\frac{35}{4}} > \lambda^5
T\sim_{c\norm{\phi}_{H^s},\epsilon_0}N^{5}T$, then we get $N\sim
T^{4/15}$. Therefore, we get that the obtained global solution
$u(x,t)$ satisfies
\[\norm{u(t)}_{H^{-7/4}}\les \ (1+|t|)^{7/15}\norm{\phi}_{H^{-7/4}}^{4/3}.\]
We can prove a similar results for $s>-7/4$.

\section{Ill-posedness of the equation}
\setcounter{equation}{0} \label{sec:2}

In this section, we prove an ill-posedness results by following the
method of Bourgain \cite{Bo97}.

\begin{thm}\label{major2}
For $s<-\frac 94$ the solution map of the Cauchy problem
(\ref{kawa}) is not $C^3$ smooth at zero, namely, there is no $T>0$
such that the solution map of:
$$
u_0\in H^s({\mathbb R})\mapsto u\in C([0,T]; H^s({\mathbb R}))
$$
is $C^3$ at zero.
\end{thm}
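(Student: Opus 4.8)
The plan is to locate the obstruction in the third Picard iterate. If the data-to-solution map $\phi\mapsto u$ were $C^3$ at the origin, then expanding the solution of \eqref{kawa} with data $\delta\phi$ via Duhamel as $u=\sum_{k\ge 1}\delta^k u_k$, the cubic term $u_3$ would have to depend on $\phi$ through a bounded symmetric trilinear map $H^s\to C([0,T];H^s)$; in particular one would need $\norm{u_3(t)}_{H^s}\les\norm{\phi}_{H^s}^3$ for $0<t\le T$. So I would first iterate $u(t)=W(t)(\delta\phi)-\half{1}\int_0^tW(t-t')\partial_x(u^2)(t')\,dt'$ twice to get an explicit formula for $u_3$. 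Writing $\xi=\xi_1+\xi_2+\xi_3$ and $\xi'=\xi_1+\xi_2$, the Fourier transform $\wh{u_3}(\xi,t)$ is a constant multiple of
\[
\xi\,e^{it\omega(\xi)}\int_{\xi=\xi_1+\xi_2+\xi_3}\xi'\,\wh\phi(\xi_1)\wh\phi(\xi_2)\wh\phi(\xi_3)\,\frac{1}{\Omega(\xi_1,\xi_2)}\left[\frac{e^{itR_3}-1}{R_3}-\frac{e^{itR_2}-1}{R_2}\right]d\xi_1d\xi_3,
\]
where $R_3=\omega(\xi_1)+\omega(\xi_2)+\omega(\xi_3)-\omega(\xi)$ is the full trilinear resonance, $R_2=\Omega(\xi',\xi_3)$, and $\Omega(\xi_1,\xi_2)=R_3-R_2$.

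The second step is to read off the size of the kernel $K:=\Omega(\xi_1,\xi_2)^{-1}[\cdots]$. One always has $|K|\les t^2$, the coherent maximum being reached when $R_3$ is small. The decisive point — the same mechanism that makes the ``Knapp example'' sharp for Lemma \ref{lemsymes}(c) — is that $R_3$ can be $O(1)$ while $R_2$ stays large. Taking $\xi_1,\xi_3$ near $+N$ and $\xi_2$ near $-N$, Proposition \ref{propesreso} gives $|\Omega(\xi_1,\xi_2)|\sim|R_2|\sim N^4|\xi'|$, whereas Lemma \ref{lem:decreso}(a) gives $|R_3|\sim N^3|\xi_1+\xi_2||\xi_2+\xi_3|$, which is quadratically smaller in the small quantities. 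On the region where $R_3=O(1)$ but $R_2$ is large one has $K\approx it/\Omega(\xi_1,\xi_2)$, and the essential cancellation is that the factor $\xi'=\xi_1+\xi_2$ produced by $\partial_x$ in the quadratic step exactly cancels $1/\Omega(\xi_1,\xi_2)\sim 1/(N^4\xi')$, leaving a coherent integrand $\xi'K\approx it/(cN^4)$ of size $\sim tN^{-4}$, with no dependence on $\xi'$.

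Finally I would build the counterexample. Take $\wh\phi=a(\mathbf{1}_{I_+}+\mathbf{1}_{I_-})$ with $I_\pm$ intervals of length $\gamma$ centered at $\pm N$, and normalize $a\sim N^{-s}\gamma^{-1/2}$ so that $\norm{\phi}_{H^s}\sim 1$. Restricting the integral to $\xi_1,\xi_3\in I_+$, $\xi_2\in I_-$ forces $\xi$ into an interval of length $\sim\gamma$ about $N$ and forces $|\xi_1+\xi_2|,|\xi_2+\xi_3|\les\gamma$, hence $|R_3|\les N^3\gamma^2$. The Knapp choice $\gamma=N^{-3/2}$ makes $R_3=O(1)$ on the \emph{whole} region while $R_2\sim N^{5/2}$ remains large, so the coherent lower bound of the previous paragraph applies throughout. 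The elementary bookkeeping (coherent integrand $\sim tN^{-4}$, fiber measure $\sim\gamma^2$, output factor $\xi\sim N$, output weight $\jb{\xi}^{s}\sim N^{s}$, output length $\sim\gamma$) then yields
\[
\norm{u_3(t)}_{H^s}\ges t\,N^{-2s-9/2}\,\norm{\phi}_{H^s}^3.
\]
For $s<-9/4$ the exponent $-2s-9/2$ is positive, so fixing $t\in(0,T]$ and letting $N\to\infty$ violates the required bound, which shows the flow map is not $C^3$ at zero.

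The main obstacle is the second step: confirming the resonance sizes and, above all, the coherence of the integrand over the chosen region. One must verify that after the $\xi'/\Omega$ cancellation the remaining phases $e^{itR_3}$, $e^{it\omega(\xi)}$ do not oscillate enough to produce cancellation in the integral — this is exactly why $t$ is kept fixed and small while $N\to\infty$, so that $tR_3=O(t)$ keeps $(e^{itR_3}-1)/R_3$ close to $it$ with a fixed argument. It is also essential to check that the threshold is driven by this genuinely trilinear near-resonance rather than by the easier purely bilinear one (the region $|\xi'|\les N^{-4}$ with $K\sim t^2$), which a direct count shows only yields ill-posedness for $s<-7/2$ and is therefore subdominant.
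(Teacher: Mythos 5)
Your proposal is correct and follows essentially the same route as the paper: Bourgain's $C^3$ argument reduced to a lower bound on the third Picard iterate, a Knapp-type datum concentrated on intervals of width $\sim N^{-3/2}$ about $\pm N$, the factorization of the trilinear resonance $R_3$ making it $O(1)$ on that region while the bilinear resonances stay of size $\sim N^4|\xi_1+\xi_2|$, and the resulting coherent lower bound $N^{-2s-9/2}$ forcing $s\ge -9/4$. The only cosmetic difference is that the paper takes width $r=(N^{3/2}\log N)^{-1}$ so that the trilinear phase actually tends to zero (at the cost of a harmless $(\log N)^{-1}$), whereas you keep $\gamma=N^{-3/2}$ and restrict to small fixed $t$; both handle the non-resonant ($R_2$) contribution by showing it is subdominant.
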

\begin{proof}
Under $s<-\frac 94$
for contradiction we assume that the solution map

$$
u_0\in H^s({\mathbb R})\mapsto u\in C([0,T]; H^s({\mathbb R}))
$$
is $C^3$ at zero. According to \cite{Bo97}, we must have
\begin{equation*}\label{normI}
\sup_{t\in
[0,T]}\|A_3(f)\|_{H^s}\lesssim\|f\|_{H^s}^3\quad\hbox{for\ all}\quad
f\in H^s(\mathbb R),
\end{equation*}
where
$$
A_3(f)(x,t)=\int_0^t W(t-\tau)\Big(\partial_x(A_1(f)\cdot
A_2(f))\Big)(\tau)d\tau;
$$
$$
A_2(f)(x,t)=\int_0^t
W(t-\tau)\Big(\partial_x(A_1(f)^2)\Big)(\tau)d\tau;
$$
$$
A_1(f)(x,t)=W(t)f=S_t \ast f(x),
$$
where $ \widehat{S}_t = e^{it \dr(\xi)}$ with $ \dr(\xi) =
\mu\xi^3-\xi^5$, or
$$
S_t(x)  = \int e^{i(x\xi + t \dr(\xi))}\,d\xi.
$$
Motivated by the selection of a test function in \cite{Bo97} and
\cite{Tz} we choose an $H^s(\mathbb R)$-function $f$ with
$$
\|f\|_{H^s}\sim
1\quad\hbox{and}\quad\hat{f}(\xi)=r^{-1/2}N^{-s}\chi_{[-r,r]}(|\xi|-N),
$$
where $r=(N^{3/2}\log N)^{-1}$, $N>0$ is sufficiently large, and
$\chi_{E}$ stands for the characteristic function of a set
$E\subseteq\mathbb R$.

The key issue is to control $\|A_3(f)\|_{H^s}$ from below. To
proceed, we make the following estimates:
$$
A_1(f)(x,t)\sim r^{-1/2}N^{-s}\int_{|\xi\pm
N|<r}e^{it\dr(\xi)+ix\xi}\,d\xi
$$
and
$$
A_2(f)(x,t)\sim F_1(x,t)-F_2(x,t)
$$
where
$$
F_1(x,t)=r^{-1}N^{-2s}\iint_{\max_{j=1,2}|\xi_j\pm
N|<r}\frac{(\xi_1+\xi_2)e^{ix(\xi_1+\xi_2)+it
(\dr(\xi_1)+\dr(\xi_2))}}{\dr(\xi_1)+\dr(\xi_2)-\dr(\xi_1+\xi_2)}\,d\xi_1d\xi_2
$$
and
$$
F_2(x,t)=r^{-1}N^{-2s}\iint_{\max_{j=1,2}|\xi_j\pm
N|<r}\frac{(\xi_1+\xi_2)e^{ix(\xi_1+\xi_2)+it\dr(\xi_1+\xi_2)}}
{\dr(\xi_1)+\dr(\xi_2)-\dr(\xi_1+\xi_2)}\,d\xi_1d\xi_2.
$$

The contribution of $F_1$ to $A_3(f)$ is comparable with
\begin{equation}\label{com}
r^{-3/2}N^{-3s}\iiint_{\max_{j=1,2,3}|\xi_j\pm N|<r}\frac
{e^{ix(\xi_1+\xi_2+\xi_3)+it\dr(\xi_1+\xi_2+\xi_3)}}{Q_1(\xi_1,\xi_2,\xi_3)^{-1}
Q_2(\xi_1,\xi_2,\xi_3)^{-1}}\,d\xi_1d\xi_2d\xi_3,
\end{equation}
where
$$
Q_1(\xi_1,\xi_2,\xi_3):=\frac{(\xi_1+\xi_2+\xi_3)
(\xi_2+\xi_3)}{\dr(\xi_2)+\dr(\xi_3)-\dr(\xi_2+\xi_3)}
$$
and
$$
Q_2(\xi_1,\xi_2,\xi_3):=\frac{e^{it(\dr(\xi_1)+\dr(\xi_2)+\dr(\xi_3)-
\dr(\xi_1+\xi_2+\xi_3))}-1}{
\dr(\xi_1)+\dr(\xi_2)+\dr(\xi_3)-\dr(\xi_1+\xi_2+\xi_3)}.
$$
Setting
$$
\theta=\dr(\xi_1)+\dr(\xi_2)+\dr(\xi_3)-\dr(\xi_1+\xi_2+\xi_3),
$$
we employ $\dr(\xi)=-\xi^5+\mu\xi^3$ to get
$$
\theta=5(\xi_1+\xi_2)(\xi_1+\xi_3)(\xi_2+\xi_3)\Big(\frac{\xi_1^2+\xi_2^2+\xi_3^2}2+
\frac{(\xi_1+\xi_2+\xi_3)^2}2-\frac{3\mu}{5}\Big).
$$
Thus
$$
|\theta|\sim N^5\quad\hbox{or}\quad |\theta|\lesssim r^2N^3\sim
(\log N)^{-2}.
$$
This tells us that the major contribution to (\ref{com}) is
obviously gotten from the second alternative, in which case we get
$$
G_1(x,t)=r^{-3/2}N^{-3s}\iiint_{\max_{j=1,2,3}\{|\xi_j\pm N|\}<r,\
|\theta|\lesssim r^2N^3}\frac{e^{i(x(\xi_1+\xi_2+\xi_3)+
t\dr(\xi_1+\xi_2+\xi_3))}}{Q_1(\xi_1,\xi_2,\xi_3)^{-1}}\,d\xi_1d\xi_2d\xi_3
$$
with
$$
\|G_1\|_{H^s}\sim r^{-1}N^{-3s}N^{-4}N^{1+s}r^2\sim
{N^{-2s-9/2}}{(\log N)^{-1}}.
$$

On the other hand, the contribution of $F_2$ to $A_3(f)$ is
comparable with
$$
G_2(x,t)=r^{-3/2}N^{-3s}\iiint_{\max_{j=1,2,3}|\xi_j\pm
N|<r}\frac{e^{i(x(\xi_1+\xi_2+\xi_3)+
t\dr(\xi_1+\xi_2+\xi_3))}}{Q_1(\xi_1,\xi_2,\xi_3)^{-1}
Q_3(\xi_1,\xi_2,\xi_3)^{-1}}\,d\xi_1d\xi_2d\xi_3,
$$
where
$$
Q_3(\xi_1,\xi_2,\xi_3):=\frac{e^{it(\dr(\xi_1)+\dr(\xi_2+\xi_3)-
\dr(\xi_1+\xi_2+\xi_3))}-1}{\dr(\xi_1)+\dr(\xi_2+\xi_3)-\dr(\xi_1+\xi_2+\xi_3)}
$$
and
\begin{eqnarray*}
\|G_2\|_{H^s}&\lesssim&
r^{-3/2}N^{-2s-5}\left\|\iiint_{\max_{j=1,2,3}|\xi_j\pm
N|<r}\frac{e^{ix(\xi_1+\xi_2+\xi_3)}}{|\xi_2+\xi_3|+N^{-4}}
\,d\xi_1d\xi_2d\xi_3\right\|_{L^2_{x}(\mathbb
R)}\\
&\lesssim& r^{-1}N^{-2s-5}\iint_{\max_{j=2,3}|\xi_j\pm N|<r}
(|\xi_2+\xi_3|+N^{-4})^{-1}\,d\xi_2d\xi_3\\
&\lesssim& N^{-2s-5}\log N.
\end{eqnarray*}
Consequently, we get
$$
\frac{N^{-2s-9/2}}{\log N}\left(1-\Big(\frac{\log
N}{N^{1/4}}\Big)^2\right)\lesssim\|G_1\|_{H^s}-\|G_2\|_{H^s}\lesssim
\|A_3(f)\|_{H^s}\lesssim 1
$$
whence deriving $s\ge -9/4$ (via letting $N\to\infty$) -- a
contradiction to $s<-9/4$. This completes the proof of Theorem
\ref{major2}.
\end{proof}

\end{document}